\newtheorem{theorem}{Theorem}[section]
\newtheorem{corollary}[theorem]{Corollary}
\newtheorem{definition}[theorem]{Definition}
\newtheorem{lemma}[theorem]{Lemma}
\newtheorem{proposition}[theorem]{Proposition}
\newcommand{\abGal}[1] {\operatorname{Gal}\big(\overline{#1}/#1\big)}
\newcommand{\PlaceOfK}{\lambda}
\newtheorem{remark}[theorem]{Remark}
\let\oldremark\remark
\renewcommand{\remark}{\oldremark\normalfont}
\numberwithin{equation}{section}
\newenvironment{proof}[1][Proof]{\textbf{#1.} }{\ \rule{0.5em}{0.5em}}
\begin{document}

\title{Explicit surjectivity of Galois representations attached to abelian surfaces and $\operatorname{GL}_2$-varieties}
\date{}

\author{Davide Lombardo
  \thanks{\texttt{davide.lombardo@math.u-psud.fr}}}
\affil{Département de Mathématiques d'Orsay}

\maketitle

\begin{abstract}
Let $A$ be an absolutely simple abelian variety without (potential) complex multiplication, defined over the number field $K$. Suppose that either $\dim A=2$ or $A$ is of $\operatorname{GL}_2$-type: we give an explicit bound $\ell_0(A,K)$ such that, for every prime $\ell>\ell_0(A,K)$, the image of $\abGal{K}$ in $\operatorname{Aut}(T_\ell(A))$ is as large as it is allowed to be by endomorphisms and polarizations.
\end{abstract}

\noindent\textbf{Keywords.} Abelian varieties over number fields; abelian surfaces; $\operatorname{GL}_2$-varieties; Galois representations; isogeny theorem;  Mumford-Tate conjecture.

\section{Introduction}
The purpose of this work is the study of Galois representations attached to abelian varieties over number fields, with a particular focus on the case of abelian surfaces. Throughout the paper, the letters $K$ and $A$ will respectively denote a number field and an abelian variety defined over $K$, and the letter $\ell$ will be reserved for prime numbers. 
Along with abelian surfaces, we also consider abelian varieties of $\operatorname{GL}_2$-type, which we define as follows:
\begin{definition}{(cf.~\cite{MR0457455})}\label{surf_def_GL2}
An abelian variety $A/K$ is said to be of $\operatorname{GL}_2$-type if its endomorphism algebra $\operatorname{End}_{\overline{K}}(A) \otimes \mathbb{Q}$ is a totally real number field $E$ such that $[E:\mathbb{Q}]=\dim A$.
\end{definition}

The representations we examine are those given by the natural action of $\abGal{K}$ on the various Tate modules of $A$ (denoted by $T_\ell(A)$), and the problem we study is that of describing the image $G_{\ell^\infty}$ of $\abGal{K}$ in $\operatorname{Aut}\left(T_\ell(A)\right)$.
In a sense to be made precise shortly, we aim to show that this image is as large as it is permitted by some `obvious' constraints, as soon as $\ell$ exceeds a certain bound $\ell_0(A,K)$ that we explicitly compute in terms of arithmetical invariants of $K$ and of the semistable Faltings height of $A$ (which we denote $h(A)$, and for which we use Faltings' original normalization: see \cite[§2.3]{MR3225452}). 

For the abelian varieties we consider -- surfaces and $\operatorname{GL}_2$-type -- this fact, in its qualitative form, has been known since the work of Serre \cite{Serre_resum8586} and Ribet \cite{MR0457455}: the novelty of the result we present here lies in its being completely explicit. Indeed, to the best of the author's knowledge, before the present work the only paper dealing with the problem of effective surjectivity results for abelian surfaces was \cite{MR1998390}, that only covered the case $\operatorname{End}_{\overline{K}}(A)=\mathbb{Z}$ (and was not completely explicit). Unfortunately, the argument of \cite{MR1998390} seems to contain a gap, for in his case analysis the author does not include the subgroup of $\operatorname{GSp}_4(\mathbb{F}_\ell)$ arising from the unique 4-dimensional symplectic representation of $\operatorname{SL}_2$ (case 4 in theorem \ref{surf_thm_Classification}): this is essentially the hardest case, and dealing with it requires nontrivial results of Raynaud on the structure of the action of inertia.

Before stating our main result let us elaborate a little on the `obvious' conditions that are imposed on $G_{\ell^\infty}$. The choice of any $K$-polarization on $A$ equips the Tate modules $T_\ell(A)$ with a bilinear form, the Weil pairing $\langle \cdot, \cdot \rangle$, compatible with the Galois action: this forces $G_{\ell^\infty}$ to be contained in the group of similitudes with respect to the bilinear form $\langle \cdot, \cdot \rangle$. Furthermore, the action of $\abGal{K}$ is also compatible with the natural action of $\operatorname{End}_K(A)$ on $T_\ell (A)$, which in turn implies that $G_{\ell^\infty}$ is contained in the centralizer of $\operatorname{End}_K(A)$ inside $\operatorname{Aut}(T_\ell(A))$.

\medskip

This second condition leads naturally to classifying abelian surfaces according to the structure of $\operatorname{End}_{\overline{K}}(A)$. A study of those rings that appear as endomorphism rings of abelian surfaces (a particular case of the so-called Albert classification, cf. for example \cite[p. 203]{Mumford70abelianvarieties}) leads to the conclusion that only five cases can arise:
\begin{enumerate}
\item Type I, trivial endomorphisms: $A$ is absolutely simple and $\operatorname{End}_{\overline{K}}(A)=\mathbb{Z}$;
\item Type I, real multiplication: $A$ is absolutely simple and $\operatorname{End}_{\overline{K}}(A)$ is an order in a real quadratic field (so $A$ is in particular a $\operatorname{GL}_2$-variety);
\item Type II, quaternionic multiplication: $A$ is absolutely simple and $\operatorname{End}_{\overline{K}}(A)$ is an order in a quaternion division algebra over $\mathbb{Q}$;
\item Type IV, complex multiplication: $A$ is absolutely simple and admits complex multiplication by a quartic CM field;
\item Non-simple case: $A_{\overline{K}}$ is isogenous to the product of two elliptic curves.
\end{enumerate}

\medskip

We focus here on the first three possibilities; the case of complex multiplication (in arbitrary dimension) is treated in \cite{2015arXiv150604734L}, and that of a product of an arbitrary number of elliptic curves without complex multiplication is studied in \cite{2015arXiv150104600L}. It should be possible to combine these results to treat any finite product of elliptic curves (in which some factors admit CM and others do not); this would in particular cover case (5) above.

\subsection{Notation and statement of the result}
We are interested in the Galois representations attached to $A$: the natural action of $\abGal{K}$ on the Tate modules $T_\ell(A)$ gives rise to a family of representations
\[
\rho_{\ell^\infty}: \abGal{K} \to \operatorname{GL}(T_\ell(A))
\]
which will be our main object of study. We will also need to consider the residual mod-$\ell$ representations, which we similarly denote by $\rho_{\ell}: \abGal{K} \to \operatorname{GL}(A[\ell])$; we shall write $G_{\ell^\infty}$ (resp.~$G_\ell$) for the image of $\rho_{\ell^\infty}$ (resp.~$\rho_\ell$). Most of our estimates will be given in terms of the following function:

\begin{definition}\label{surf_def_bFunction}
Let $\alpha(g)=2^{10}g^3$ and define
$
b(d,g,h)=\left( (14g)^{64g^2} d \max\left(h, \log d,1 \right)^2 \right)^{\alpha(g)}.
$
If $K$ is a number field and $A$ is an abelian variety over $K$, we shall use the shorthand $b(A/K)$ to denote $b([K:\mathbb{Q}],\dim A,h(A))$.
\end{definition}


We are now ready to state our main results. 

\begin{theorem}\label{surf_thm_MainZ}
Let $A/K$ be an abelian surface with $\operatorname{End}_{\overline{K}}(A)=\mathbb{Z}$. The equality $G_{\ell^\infty}=\operatorname{GSp}_4(\mathbb{Z}_\ell)$ holds for every prime $\ell$ that satisfies the following three conditions:
\begin{itemize}
\item $\ell > b(2[K:\mathbb{Q}],4,2h(A))^{1/4}$;
\item there exists a place of $K$ of characteristic $\ell$ at which $A$ has semistable reduction;
\item $\ell$ is unramified in $K$.
\end{itemize}
\end{theorem}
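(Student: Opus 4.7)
The plan is first to prove the residual surjectivity $G_\ell = \operatorname{GSp}_4(\mathbb{F}_\ell)$ and then deduce the $\ell$-adic statement $G_{\ell^\infty} = \operatorname{GSp}_4(\mathbb{Z}_\ell)$ by a standard lifting argument. A $K$-rational polarization on $A$ already forces $G_{\ell^\infty} \subseteq \operatorname{GSp}_4(\mathbb{Z}_\ell)$ via compatibility with the Weil pairing, so only the reverse inclusion requires work.

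For the mod-$\ell$ statement I would invoke Theorem \ref{surf_thm_Classification} to enumerate the proper subgroups of $\operatorname{GSp}_4(\mathbb{F}_\ell)$ that could a priori contain $G_\ell$ and rule each one out. Reducible or imprimitive subgroups would produce a Galois-stable subspace, or direct-sum decomposition, of $A[\ell]$; the explicit isogeny estimates of Masser--W\"ustholz, as refined by Gaudron--R\'emond (whose shape is reflected in the function $b$), then yield an endomorphism of $A_{\overline{K}}$ outside $\mathbb{Z}$, contradicting the hypothesis, as soon as $\ell$ exceeds the stated bound. Subgroups contained in the normalizer of a split or non-split maximal torus are excluded by a Chebotarev argument on Frobenius traces combined with the same isogeny estimates, and the ``exceptional'' small finite subgroups disappear once $\ell$ is larger than an absolute constant absorbed in $b(\cdot)$. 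The hardest case is the image of $\operatorname{GL}_2$ acting via its unique $4$-dimensional irreducible representation on $\operatorname{Sym}^3$ (case~4 of the classification), which is invisible to Frobenius-trace methods alone. I would handle it using Raynaud's theorem on the action of tame inertia at $\ell$ on finite flat $\ell$-torsion group schemes: the assumptions that $\ell$ is unramified in $K$ and that $A$ has semistable reduction at some place of characteristic $\ell$ are exactly what is needed to apply Raynaud and to extract a list of tame inertia characters on $A[\ell]$ incompatible with the $\operatorname{Sym}^3$ structure.

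Once the residual statement is in hand, the lifting step is routine: for $\ell \geq 5$ the kernel of reduction $\operatorname{GSp}_4(\mathbb{Z}_\ell) \twoheadrightarrow \operatorname{GSp}_4(\mathbb{F}_\ell)$ is a pro-$\ell$ group whose graded pieces are irreducible under the adjoint action of $\operatorname{GSp}_4(\mathbb{F}_\ell)$ (since $\mathfrak{sp}_4$ is simple), so any closed subgroup of $\operatorname{GSp}_4(\mathbb{Z}_\ell)$ surjecting mod $\ell$ is automatically the whole group. The appearance of $2[K:\mathbb{Q}]$, $4$ and $2h(A)$ inside $b(\cdot)$ --- rather than $[K:\mathbb{Q}]$, $2$ and $h(A)$ --- reflects the standard practice of applying the isogeny estimates to $A\times A^\vee$ (of dimension $4$ and Faltings height $2h(A)$) after at most a quadratic base change ensuring that polarizations and endomorphisms are $K$-rational. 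The central obstacle is case~4: without it the theorem would essentially reduce to Frobenius-trace counting plus the isogeny theorem, whereas the $\operatorname{Sym}^3$-subgroup genuinely requires the Raynaud input and consequently the extra hypotheses on semistability and on $\ell$ being unramified in $K$. This is precisely the gap in \cite{MR1998390} that the present work closes.
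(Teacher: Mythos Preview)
Your overall strategy matches the paper's: reduce to the mod-$\ell$ statement via Serre's lifting lemma (Lemma \ref{surf_lemma_LiftingEndZ}), then exclude each class of maximal subgroups from Theorem \ref{surf_thm_Classification}. The treatment of case~4 via Raynaud's theorem on tame inertia is exactly what the paper does, and you are right that this is the core obstacle and the reason for the semistability and unramifiedness hypotheses.

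A few places where your sketch diverges from, or is vaguer than, the actual argument. First, the paper uses no Chebotarev or Frobenius-trace input at all. Class $\mathcal{C}_3$ is handled by a pure isogeny argument: after a quadratic extension $K'/K$ the image acts $\mathbb{F}_{\ell^2}$-linearly, hence the centralizer of $G_\ell$ in $\operatorname{End}(A[\ell])$ strictly contains $\mathbb{F}_\ell$, and an isogeny estimate applied to $A^2$ (Lemma \ref{surf_lemma_AbsoluteIrreducibility}) forces $\ell^{4}\le b_0(A^2/K')$. This is precisely where the parameters $2[K:\mathbb{Q}]$, $4$, $2h(A)$ and the exponent $1/4$ come from --- it is $A^2$ over an at most quadratic extension, not $A\times A^\vee$, and the quadratic extension arises in cases $\mathcal{C}_2$ and $\mathcal{C}_3$, not from rationalising endomorphisms (these are already $K$-rational since $\operatorname{End}_{\overline K}(A)=\mathbb{Z}$). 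Second, case~5 (the exceptional groups with $|\mathbb{P}G_\ell|\le 3840$) is \emph{not} disposed of by a constant absorbed in $b(\cdot)$: the paper uses Raynaud a second time, via Proposition \ref{surf_prop_LowerBoundGell}, to obtain the linear lower bound $|\mathbb{P}G_\ell|\ge \ell-1$, which again needs the semistable-reduction hypothesis. One could instead pass to a degree-$3840$ extension and run an isogeny argument, but that would produce a strictly worse bound than the one stated in the theorem.
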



For the case of real multiplication we treat the more general situation of abelian varieties of $\operatorname{GL}_2$-type:

\begin{theorem}\label{surf_thm_GL2}
Let $A/K$ be an abelian variety of dimension $g$. Suppose that $\operatorname{End}_{\overline{K}}(A)$ is an order in a totally real field $E$ of degree $g$ over $\mathbb{Q}$ (that is, $A$ is of $\operatorname{GL}_2$-type), and that all the endomorphisms of $A$ are defined over $K$. Let $\ell$ be a prime unramified both in $K$ and in $E$ and strictly larger than 
$
\max\left\{b(A/K)^{g}, \; b(2[K:\mathbb{Q}],2\dim(A),2h(A))^{1/2} \right\}
$: then we have
\[
G_{\ell^\infty}=\left\{x \in \operatorname{GL}_2\left(\mathcal{O}_E \otimes \mathbb{Z}_\ell \right) \bigm\vert \operatorname{det}_{\mathcal{O}_E} x \in \mathbb{Z}_\ell^\times \right\}.
\]
\end{theorem}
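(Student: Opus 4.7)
The plan is to prove Theorem \ref{surf_thm_GL2} by decomposing the $\ell$-adic representation into its $\lambda$-adic factors (one for each prime $\lambda$ of $E$ above $\ell$), establishing mod-$\lambda$ surjectivity for each factor separately, lifting this to the full $\lambda$-adic image, and finally combining the factors via Goursat's lemma. The starting point is the decomposition: since $\ell$ is unramified in $E$, we have $\mathcal{O}_E \otimes \mathbb{Z}_\ell \cong \prod_{\lambda \mid \ell} \mathcal{O}_{E_\lambda}$ and correspondingly $T_\ell(A) = \bigoplus_{\lambda \mid \ell} T_\lambda(A)$, each $T_\lambda(A)$ being a free $\mathcal{O}_{E_\lambda}$-module of rank $2$. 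Thus $\rho_{\ell^\infty}$ factors as a product $\prod_\lambda \rho_{\lambda^\infty}$ with $\rho_{\lambda^\infty} : \abGal{K} \to \operatorname{GL}_2(\mathcal{O}_{E_\lambda})$; a $K$-polarization identifies $\det_{\mathcal{O}_{E_\lambda}} \rho_{\lambda^\infty}$ with the $\ell$-cyclotomic character, so $G_{\ell^\infty}$ lands a priori inside the subgroup $H$ of the statement, and the problem is to prove equality.

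The core step is residual surjectivity: for each $\lambda \mid \ell$, the image of $\rho_\lambda : \abGal{K} \to \operatorname{GL}_2(\mathbb{F}_\lambda)$ contains $\operatorname{SL}_2(\mathbb{F}_\lambda)$. I would follow the classical Serre--Ribet strategy, going through Dickson's classification of subgroups of $\operatorname{PGL}_2(\mathbb{F}_\lambda)$. Exceptional projective images ($A_4$, $S_4$, $A_5$) are excluded by an order estimate as soon as $\ell$ is large. A Borel image would produce a $\operatorname{Gal}$-stable, $\mathcal{O}_E$-stable subgroup of $A[\lambda]$, hence a $K$-isogeny $A \to A'$ whose degree is a small power of $\ell$; for $\ell$ beyond the Gaudron--Rémond threshold this contradicts the effective isogeny theorem encoded in the function $b(A/K)$. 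Images in a split or non-split Cartan, or in its normalizer, would give either an extra $\overline{K}$-endomorphism of $A$ or, via the associated character, an isogeny between $A$ and a suitable twist; both possibilities are ruled out through the isogeny theorem applied to $A$ (or to a low-dimensional power thereof), which is what accounts for the first bound $\ell > b(A/K)^g$.

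Once each $\operatorname{Im}\rho_\lambda$ contains $\operatorname{SL}_2(\mathbb{F}_\lambda)$, a standard lifting lemma -- no proper closed subgroup of $\operatorname{SL}_2(\mathcal{O}_{E_\lambda})$ surjects onto $\operatorname{SL}_2(\mathbb{F}_\lambda)$ for $\ell \geq 5$, because the adjoint action of $\operatorname{SL}_2(\mathbb{F}_\lambda)$ on $\mathfrak{sl}_2$ is irreducible in this range -- upgrades the result to $\operatorname{Im}\rho_{\lambda^\infty} \supseteq \operatorname{SL}_2(\mathcal{O}_{E_\lambda})$. To recombine the factors I would apply Goursat's lemma to $\prod_{\lambda \mid \ell} \operatorname{Im}\rho_{\lambda^\infty}$: a failure of surjectivity onto $H$ would, by simplicity of $\operatorname{PSL}_2(\mathbb{F}_\lambda)$ for $\ell \geq 5$, produce two distinct places $\lambda \neq \lambda'$ above $\ell$ and an isomorphism between the corresponding residual projective representations. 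This forces $f(\lambda) = f(\lambda')$ and identifies $A[\lambda]$ with $A[\lambda']$ up to a cyclotomic twist, giving a nontrivial $\operatorname{Gal}$-equivariant subgroup of $A \times A$ whose existence is ruled out, for $\ell$ large, by the isogeny theorem applied to the $2g$-dimensional variety $A \times A$ of height $2h(A)$; this is precisely the origin of the second bound $\ell > b(2[K:\mathbb{Q}], 2g, 2h(A))^{1/2}$.

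The main obstacle is the residual-surjectivity step, and within it the Borel and Cartan cases: these are the places where the quantitative version of the Gaudron--Rémond isogeny theorem is actually used, and they dictate the precise shape of $b(A/K)^g$. The lifting step is formal given $\ell \geq 5$, and the Goursat combination is essentially bookkeeping once the two quantitative inputs -- the isogeny theorem for $A$ and for $A \times A$ -- are in place.
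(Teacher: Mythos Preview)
Your overall strategy --- Dickson classification for each $G_\lambda$, isogeny estimates to kill the small cases, Goursat to glue factors, lifting to the $\ell$-adic level --- matches the paper's. But your case analysis has a real gap, and your attribution of the two bounds is essentially backwards.

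The gap: after excluding Borel, Cartan-normalizer, and exceptional subgroups, Dickson's theorem over $\mathbb{F}_\lambda$ (cases 5a/5b of Theorem~\ref{surf_thm_Dickson}) only guarantees that $G_\lambda$ contains a conjugate of $\operatorname{SL}_2(\mathbb{F}_{\ell^\alpha})$ for \emph{some} $\alpha$ dividing $\beta = [\mathbb{F}_\lambda:\mathbb{F}_\ell]$, not that $\alpha=\beta$. When $\alpha<\beta$ the image sits (after a degree-$\leq 4$ extension) inside $\operatorname{GL}_2(\mathbb{F}_{\ell^\alpha})$, and this subfield case is none of the three you treat. The paper handles it by a separate isogeny argument (Lemmas~\ref{surf_lemma_level} and \ref{surf_lemma_SurjOnSL2Lambda}): one builds a Galois-stable $\Gamma\subset A[\ell]$ of order $\ell^{2\alpha}$ and deduces $\ell^2\le b_0(A/K)$. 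This case is invisible for elliptic curves (where $\mathbb{F}_\lambda=\mathbb{F}_\ell$) but is essential here.

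The bounds: the condition $\ell > b(A/K)^g$ is \emph{not} there for the Borel/Cartan cases. Its sole role is to force $\ell\nmid[\mathcal{O}_E:\operatorname{End}_{\overline K}(A)]$ (Proposition~\ref{surf_prop_EveryPrimeIsGood}), without which $T_\ell(A)$ need not be free of rank $2$ over $\mathcal{O}_E\otimes\mathbb{Z}_\ell$ and your opening decomposition is unjustified. Conversely, the bound $M(A/K)=b(2[K:\mathbb{Q}],2g,2h(A))^{1/2}$ absorbs \emph{all} the isogeny arguments --- Borel, Cartan-normalizer (this is where $A^2$ over a quadratic extension genuinely enters; see Lemma~\ref{surf_lemma_DimensionTwo}), exceptional, subfield, and Goursat. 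In particular, the paper's Goursat step (Lemma~\ref{surf_lemma_SurjOnTwo}) builds $\Gamma$ inside $A[\ell]$ itself, since $A[\lambda_1]\oplus A[\lambda_2]\subset A[\ell]$; the isogeny used is $A\to A/\Gamma$ over at most a quadratic extension, not an isogeny of $A\times A$. Also, the paper carries out Goursat at the mod-$\ell$ level (on pairs, then Lemma~\ref{surf_Ribet_ProductsOfTwo} for the full product) and only then lifts via Proposition~\ref{surf_prop_LiftingLemma}; your order works too, but the bookkeeping is cleaner the paper's way.
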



The case of abelian surfaces with real multiplication then follows as an immediate consequence:

\begin{corollary}\label{surf_cor_MainRM}
Suppose that $R=\operatorname{End}_{\overline{K}}(A)$ is an order in a real quadratic field $E$ and that all the endomorphisms of $A$ are defined over $K$. Let $\ell$ be a prime unramified both in $K$ and in $E$ and strictly larger than $b(2[K:\mathbb{Q}],4,2h(A))^{1/2}$: then we have
\[
G_{\ell^\infty}=\left\{x \in \operatorname{GL}_2\left(\mathcal{O}_E \otimes \mathbb{Z}_\ell \right) \bigm\vert \operatorname{det}_{\mathcal{O}_E} x \in \mathbb{Z}_\ell^\times \right\}.
\]
\end{corollary}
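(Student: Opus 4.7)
The plan is to deduce the corollary directly from Theorem \ref{surf_thm_GL2} applied with $g=2$. Under the hypotheses, $A$ is an abelian variety of $\operatorname{GL}_2$-type in the sense of Definition \ref{surf_def_GL2}: the endomorphism algebra $\operatorname{End}_{\overline{K}}(A) \otimes \mathbb{Q}$ is equal to $E$, a totally real field of degree $2 = \dim A$, and all endomorphisms are assumed to be $K$-rational. Moreover, the hypotheses that $\ell$ is unramified in $K$ and in $E$ match those of the theorem, so only the numerical bound on $\ell$ requires comparison.

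I would first observe that the second term in the maximum appearing in Theorem \ref{surf_thm_GL2}, namely $b(2[K:\mathbb{Q}], 2\dim A, 2h(A))^{1/2}$, is identical (upon substituting $\dim A = 2$) to the bound stated in the corollary. Thus the only remaining task is to verify that this quantity already dominates the first term $b(A/K)^{g} = b([K:\mathbb{Q}], 2, h(A))^{2}$, so that a single clean bound suffices.

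For this I would compare overall exponents in the definition of $b$. Setting $B_1 = (28)^{256}[K:\mathbb{Q}]\max(h(A),\log[K:\mathbb{Q}],1)^2$ and $B_2 = (56)^{1024}\cdot 2[K:\mathbb{Q}]\max(2h(A),\log(2[K:\mathbb{Q}]),1)^2$, one computes $\alpha(2) = 2^{13}$ and $\alpha(4) = 2^{16}$, so that $b(A/K)^2 = B_1^{2^{14}}$ while $b(2[K:\mathbb{Q}], 4, 2h(A))^{1/2} = B_2^{2^{15}}$. Since $B_1 \le B_2$ trivially (every input to $B_2$ is at least as large as the corresponding input to $B_1$) and $2^{14} < 2^{15}$, the inequality $b(A/K)^2 \le b(2[K:\mathbb{Q}],4,2h(A))^{1/2}$ is immediate, which completes the reduction.

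There is essentially no obstacle in this argument beyond the bookkeeping check above: the substantive work lives entirely inside Theorem \ref{surf_thm_GL2}, and the corollary is merely a matter of absorbing the weaker term of the maximum into the stronger one in order to present a cleaner statement in the special case of abelian surfaces.
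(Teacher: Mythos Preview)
Your proof is correct and follows exactly the paper's approach: apply Theorem \ref{surf_thm_GL2} with $g=2$ and verify the ``easy'' inequality $b(A/K)^2 \le b(2[K:\mathbb{Q}],4,2h(A))^{1/2}$ so that the maximum collapses to a single term. Your only addition is the explicit bookkeeping with the exponents $\alpha(2)=2^{13}$ and $\alpha(4)=2^{16}$, which the paper leaves implicit.
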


\begin{remark}\label{surf_rmk_RMStatement}
When $A$ is a surface, the group $H_{\ell^\infty}:=\left\{ x \in \operatorname{GL}_2\left(\mathcal{O}_E \otimes \mathbb{Z}_\ell \right) \bigm\vert \det_{\mathcal{O}_E} x \in \mathbb{Z}_\ell^\times \right\}$ appearing in this statement admits the following concrete description. When $\ell$ is split in $E$, the ring $\mathcal{O}_E \otimes \mathbb{Z}_\ell$ is isomorphic to $\mathbb{Z}_\ell \oplus \mathbb{Z}_\ell$, and we have $H_{\ell^\infty}=
\left\{ (h_1,h_2) \in \operatorname{GL}_2(\mathbb{Z}_\ell)^2 \bigm\vert \det h_1=\det h_2 \right\}$. If, on the other hand, $\ell$ is inert in $E$, then $\mathcal{O}_E \otimes \mathbb{Z}_\ell$ is a domain that contains a canonical copy of $\mathbb{Z}_\ell$ (namely $\mathbb{Z} \otimes \mathbb{Z}_\ell$), and we have $
H_{\ell^\infty}= \left\{ x \in \operatorname{GL}_2(\mathcal{O}_E \otimes \mathbb{Z}_\ell) \bigm\vert \det x \in \mathbb{Z}_\ell^\times \right\}$, where now $\det$ is the usual determinant (since $\mathcal{O}_E \otimes \mathbb{Z}_\ell$ is a domain). More generally, if $A$ is of dimension $g$ and $\ell$ is unramified in $E$, then
$
\displaystyle H_{\ell^\infty} = \left\{ (x_\lambda) \in \prod_{\lambda \mid \ell} \operatorname{GL}_2(\mathcal{O}_\lambda) \bigm\vert \det x_{\lambda_1}=\det x_{\lambda_2} \in \mathbb{Z}_\ell^\times \quad \forall \lambda_1, \lambda_2 \mid \ell \right\},
$
where the product is over the places of $E$ dividing $\ell$.
\end{remark}

Finally, we come to the case of quaternionic multiplication:

\begin{theorem}\label{surf_thm_MainQM}
Let $A/K$ be an abelian surface such that $R=\operatorname{End}_{\overline{K}}(A)$ is an order in an indefinite quaternion division algebra over $\mathbb{Q}$, and let $\Delta$ be the discriminant of $R$. Suppose that all the endomorphisms of $A$ are defined over $K$. If $\ell$ is larger than $b(2[K:\mathbb{Q}],4,2h(A))^{1/2}$, does not divide $\Delta$, and is unramified in $K$, then the equality $G_{\ell^\infty}=\left(R \otimes \mathbb{Z}_\ell\right)^\times$ holds.
\end{theorem}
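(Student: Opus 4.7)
The plan is to use Morita equivalence to reduce the problem to showing that a certain two-dimensional $\ell$-adic Galois representation has image equal to all of $\operatorname{GL}_2(\mathbb{Z}_\ell)$, after which I apply the usual Serre-style analysis: establish surjectivity mod $\ell$ by ruling out each maximal subgroup of $\operatorname{GL}_2(\mathbb{F}_\ell)$, then lift from $\mathbb{F}_\ell$ to $\mathbb{Z}_\ell$.

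\textbf{Step 1 (Morita reduction).} Since $\ell \nmid \Delta$, the quaternion algebra $R\otimes \mathbb{Q}$ splits at $\ell$ and $R \otimes \mathbb{Z}_\ell$ is a maximal order in $M_2(\mathbb{Q}_\ell)$, hence isomorphic to $M_2(\mathbb{Z}_\ell)$. Picking a rank-one idempotent $e \in R \otimes \mathbb{Z}_\ell$ and setting $W := e\, T_\ell(A)$ gives a free $\mathbb{Z}_\ell$-module of rank $2$; by Morita equivalence $T_\ell(A) \cong W \oplus W$ as $R \otimes \mathbb{Z}_\ell$-modules, and the centralizer of $R \otimes \mathbb{Z}_\ell$ in $\operatorname{End}_{\mathbb{Z}_\ell}(T_\ell A)$ is identified with $\operatorname{End}_{\mathbb{Z}_\ell}(W)$, under which $(R\otimes \mathbb{Z}_\ell)^\times \cong \operatorname{GL}(W) = \operatorname{GL}_2(\mathbb{Z}_\ell)$. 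Since all endomorphisms are defined over $K$, the image $G_{\ell^\infty}$ is contained in this copy of $\operatorname{GL}_2(\mathbb{Z}_\ell)$, and one must show equality. Computing via the Weil pairing, $\det \rho_{T_\ell A} = \chi_\ell^2$ and $\rho_{T_\ell A} \cong \rho_W \oplus \rho_W$, so $\det\rho_W = \chi_\ell$.

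\textbf{Step 2 (Lifting).} By a standard group-theoretic lemma (for $\ell\geq 5$), a closed subgroup of $\operatorname{GL}_2(\mathbb{Z}_\ell)$ surjecting onto $\operatorname{GL}_2(\mathbb{F}_\ell)$ modulo $\ell$ is the whole group. It therefore suffices to prove surjectivity of the mod-$\ell$ representation $\overline{\rho}_W: \abGal{K}\to\operatorname{GL}_2(\mathbb{F}_\ell)$.

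\textbf{Step 3 (Surjectivity mod $\ell$).} Classify the maximal subgroups of $\operatorname{GL}_2(\mathbb{F}_\ell)$ that could contain the image of $\overline{\rho}_W$: Borel, normalizer of a (split or non-split) Cartan, and exceptional subgroups with projective image one of $A_4,S_4,A_5$. For the exceptional case, the projective image has order at most $60$, whereas $\det\overline{\rho}_W = \chi_\ell$ has order $\ell-1$ (as $\ell$ is unramified in $K$), forcing $\ell$ to be too small. For the Borel case, a $\overline{\rho}_W$-stable line in $W/\ell W$ produces, via the Morita isomorphism $A[\ell]\cong (W/\ell W)^{\oplus 2}$, an $(\abGal{K},R)$-stable submodule of $A[\ell]$; the explicit isogeny theorem of Gaudron–Rémond then yields an $R$-linear isogeny of controlled degree from $A$ to another abelian surface, and combining the resulting Faltings-height inequality with the Weil bounds on traces of Frobenius produces the bound $\ell \leq b(2[K:\mathbb{Q}],4,2h(A))^{1/2}$. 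For the normalizer-of-Cartan case, restricting to the index-two subgroup of $\abGal{K}$ fixing the Cartan (a quadratic extension $K'/K$) reduces matters to the reducible situation over $K'$, and the same isogeny argument applied over $K'$ gives the same bound — the doubling of the field degree being precisely the reason the first argument of $b$ is $2[K:\mathbb{Q}]$ rather than $[K:\mathbb{Q}]$. The hypothesis that $\ell$ is unramified in $K$ enters here through Raynaud's classification of finite flat group schemes of order $\ell$, which is needed to analyze the action of inertia at $\ell$ (most delicately in the non-split Cartan case).

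The hardest step will be the Borel case, where one must run the explicit isogeny theorem in the presence of the quaternionic endomorphism structure (using $R$-linear isogenies rather than merely $\mathbb{Z}$-linear ones) and check that the resulting bound on $\ell$ matches $b(2[K:\mathbb{Q}],4,2h(A))^{1/2}$; the normalizer-of-Cartan case reduces to this, and the exceptional case is comparatively elementary.
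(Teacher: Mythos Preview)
Your overall architecture matches the paper's: reduce via $T_\ell(A)\cong W\oplus W$ to a two-dimensional representation, prove that the mod-$\ell$ image contains $\operatorname{SL}_2(\mathbb{F}_\ell)$ via Dickson's classification, then lift. However, two of your three case analyses contain genuine errors, and you invoke tools the paper neither needs nor uses.

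\textbf{Exceptional case.} Your argument is incorrect: $\det\overline{\rho}_W$ having order $\ell-1$ does \emph{not} conflict with $\mathbb{P}G_\ell$ having order at most $60$, because the image may contain many scalars (and scalars together with a handful of non-scalar elements can easily have determinant image equal to all of $\mathbb{F}_\ell^\times$). The paper instead passes to the extension $K''/K$ of degree at most $60$ cut out by $\abGal{K}\to\mathbb{P}G_\ell$; over $K''$ the image consists of scalars, hence stabilises a line, and the Borel-case isogeny argument gives $\ell^2\leq b(60[K:\mathbb{Q}],2,h(A))$.

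\textbf{Cartan case and the role of Raynaud.} Your claim that passing to the quadratic extension $K'$ ``reduces matters to the reducible situation'' is only valid for a \emph{split} Cartan: a non-split Cartan is isomorphic to $\mathbb{F}_{\ell^2}^\times$ and acts irreducibly on $\mathbb{F}_\ell^2$, so no line is stabilised over $K'$. The paper does not use Raynaud here at all. Instead, over $K'$ the image becomes abelian; one picks a non-scalar $\alpha\in G_\ell$, forms the Galois-stable subgroup $\{(x,0,\alpha x,0):x\in W_\ell\}\subset W_\ell^{\oplus 4}\cong A[\ell]\times A[\ell]$, and applies the isogeny theorem to $A^2$ (this is the graph trick of Lemmas~\ref{surf_lemma_AbsoluteIrreducibility} and~\ref{surf_lemma_DimensionTwo}), obtaining $\ell^2\leq b_0(A^2/K')\leq b(2[K:\mathbb{Q}],4,2h(A))$. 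Similarly, in the Borel case the paper simply applies the isogeny theorem to $A\to A/(\Gamma\oplus\Gamma)$ and compares degrees; no Faltings-height inequalities or Weil bounds on Frobenius traces are involved. The hypothesis that $\ell$ is unramified in $K$ is used \emph{only} to make $\chi_\ell:\abGal{K}\to\mathbb{Z}_\ell^\times$ surjective (Lemma~\ref{surf_lemma_Conclusion}), so that $G_{\ell^\infty}\supseteq\operatorname{SL}_2(\mathbb{Z}_\ell)$ upgrades to $G_{\ell^\infty}=\operatorname{GL}_2(\mathbb{Z}_\ell)$; Raynaud's results play no role whatsoever in the quaternionic case.
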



\begin{remark}
Note that in the case of real and quaternionic multiplication we demand that the endomorphisms of $A$ be defined over $K$, but this is not a severe restriction. Indeed, this condition can be achieved by passing to a finite extension $K'$ of $K$, and when $A$ is a surface it is known that $K'/K$ can be taken to be of degree at most 2 for the case of real multiplication (\cite[Proposition 4.3]{MR1154704}), and at most 12 for the quaternionic case (\cite[Prop. 2.1]{MR2091964}); more complicated (but still explicit) bounds on the degree $K'/K$ are also available in case $A$ is of $\operatorname{GL}_2$-type, cf. again \cite{MR1154704}.

Replacing $K$ with $K'$ corresponds to killing the group of connected components of $G_{\ell^\infty}$, that is, it ensures that the image of Galois is connected. Analogous results (with slightly different bounds) could be stated without this assumption, at the cost of replacing $G_{\ell^\infty}$ by its identity component in the conclusion. 
\end{remark}


Before proceeding with the proof of the three main statements a few more comments are in order. Consider the hypothesis that there is a place of $K$ of characteristic $\ell$ at which $A$ has semistable reduction (this assumption appears in the statement of theorem \ref{surf_thm_MainZ}). Without any assumption on $A$, this condition cannot be turned into an inequality only involving $h(A)$: indeed, the set of primes of unstable reduction of $A/K$ is certainly not invariant under extensions of scalars, so it cannot be controlled just in terms of the \textit{stable} Faltings height; this is really an arithmetical condition that is hard to avoid. On the plus side, the primes which fail to meet this restriction are often easy to determine in practice, especially when $A$ is explicitly given as the Jacobian of a genus 2 curve.

Also note that in many intermediate lemmas we give estimates in terms of the best possible isogeny bound (cf.~section \ref{surf_sect_IsogenyTheorem}), thus avoiding to use the specific form of the function $b(A/K)$. However, in order to make the final results more readable, we have chosen to express them in a form that only involves the function $b$; this also has the merit of giving completely explicit bounds.

\smallskip



Let us also briefly review previous work in the area. As already mentioned, Serre \cite{Serre_resum8586} proved that for a large class of abelian varieties (that includes surfaces with $\operatorname{End}_{\overline{K}}(A)=\mathbb{Z}$) there exists a number $\ell_1(A,K)$ such that $G_{\ell^\infty}=\operatorname{GSp}_{2\dim A}(\mathbb{Z}_\ell)$ for every $\ell$ larger than $\ell_1(A,K)$; his result, however, is not effective, in the sense that the proof does not give any bound on $\ell_1(A,K)$. Similarly, Ribet proved in \cite{MR0457455} an open image result for abelian varieties of $\operatorname{GL}_2$-type that includes surfaces with real multiplication as a particular case, but that is again non-effective. The case of quaternionic multiplication was treated independently in \cite{MR0419368} and in \cite{QMJacobson} by extending the techniques Serre used to prove his celebrated open image theorem for elliptic curves in \cite{MR0387283}, but once again these results were not effective. Dieulefait \cite{MR1969642} considers abelian surfaces $A/\mathbb{Q}$ that satisfy $\operatorname{End}_{\overline{\mathbb{Q}}}(A)=\mathbb{Z}$, and gives sufficient conditions for the equality $G_{\ell^\infty}=\operatorname{GSp}_{4}(\mathbb{Z}_\ell)$ to hold for a fixed prime $\ell$; the form of these conditions, however, is again such that they do not yield a bound for the largest prime for which the equality $G_\ell=\operatorname{GSp}_{4}(\mathbb{Z}_\ell)$ fails to hold. The treatment we give of case 4 of theorem \ref{surf_thm_Classification}, however, has been inspired by Dieulefait's paper.
Finally, notice that \cite[Theorem 5.8]{MR1969642} gives an explicit example of a 2-dimensional Jacobian over $\mathbb{Q}$ for which the equality $G_{\ell^\infty}=\operatorname{GSp}_4(\mathbb{Z}_\ell)$ holds for every prime $\ell \geq 3$ (the result was conditional on Serre's modularity conjecture, which has subsequently been proved by Khare and Wintenberger, cf.~\cite{MR2551763} and \cite{MR2551764}). This result is best possible, in the sense that there exists no principally polarized abelian surface over $\mathbb{Q}$ for which $G_{2^\infty}=\operatorname{GSp}_4(\mathbb{Z}_2)$, cf.~\cite[Proposition 2.5]{2015arXiv150807655Z}.


\smallskip

It is tempting to conjecture that bounds $\ell_0(A,K)$ as above can be taken to be independent of $A$ (albeit obviously not of $K$), but very little is known in this direction: the analogous question for elliptic curves, often called ``Serre's uniformity problem'', is still open even for $K=\mathbb{Q}$. We do know, however, that such bounds must necessarily depend on the dimension of $A$: for example, Theorem 1.4 of \cite{zbMATH06114743} shows that for infinitely many primes $p$ there exists a $\mathbb{Q}$-abelian variety $A_p$ of $\operatorname{GL}_2$-type whose associated $\rho_p$ is not surjective and whose dimension satisfies $\displaystyle \dim A_p \leq \frac{(p-5)(p-7)}{24}$; in particular, a uniform bound for $\ell_0(A,\mathbb{Q})$, if it exists, must grow at least as fast as $\sqrt{\dim A}$.

\medskip

To conclude this introduction let us give a brief overview of the organisation of the paper and of the proof methods. Theorems \ref{surf_thm_MainZ}, \ref{surf_thm_GL2}, and \ref{surf_thm_MainQM} will be shown in sections \ref{surf_sect_EndZ}, \ref{surf_sect_RM}, and \ref{surf_sect_QM} respectively.

The main input for the proof in the case of trivial endomorphism ring comes from group theory, complemented by an application of some nontrivial results of Raynaud. After reducing the problem to that of showing the equality $G_\ell=\operatorname{GSp}_4(\mathbb{F}_\ell)$ for $\ell$ large enough, we recall the classification of the maximal proper subgroups of $\operatorname{GSp}_4(\mathbb{F}_\ell)$ and proceed to show that each of them cannot occur as the image of the Galois representation on $A[\ell]$, at least for $\ell$ large enough. In most cases, this follows from the so-called isogeny theorem of Masser and W\"ustholz \cite{MR1207211} \cite{MR1217345} (theorem \ref{surf_thm_Isogeny} below): in many circumstances, if $G_\ell$ is not maximal, then the Galois module $A[\ell]$ (or $A[\ell] \times A[\ell]$) is nonsimple, a fact that gives rise to minimal isogenies of high degree, eventually contradicting the isogeny theorem for $\ell$ large enough. In some exceptional cases, however, the representations $A[\ell]$ and $A[\ell] \times A[\ell]$ can be irreducible even if $G_\ell$ is comparatively very small, and it is to exclude this possibility that we need to invoke Raynaud's results.
These same results of Raynaud also lead to the following lower bound on the order of $\mathbb{P}G_\ell$ (the projective image of $G_\ell$), which might have some independent interest:
\begin{proposition}{(Proposition \ref{surf_prop_LowerBoundGell})}
Let $A/K$ be any abelian variety of dimension $g$. Let $\ell$ be a prime such that there is a place of $K$ of residual characteristic $\ell$ at which $A$ has either good or bad semistable reduction. If $\ell$ is unramified in $K$ and not less than $g+2$, then $|\mathbb{P}G_\ell| \geq \ell-1$.
\end{proposition}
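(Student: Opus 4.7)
The plan is to exhibit inertia at a place of residual characteristic $\ell$ as already contributing a large subgroup of $\mathbb{P}G_\ell$. Fix a place $v$ of $K$ above $\ell$ at which $A$ has semistable reduction; since $|\mathbb{P}G_\ell|$ is at least the order of the image of the inertia subgroup $I_v$, it suffices to produce a subgroup of order $\ell-1$ inside the projective image of $I_v$.

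The key input is Raynaud's theorem on finite flat group schemes killed by $\ell$ over a local base whose absolute ramification index $e$ satisfies $e\leq \ell-1$. Because $\ell$ is unramified in $K$ we have $e=1$, and Raynaud's classification shows that on the semisimplification of $A[\ell]$ as an $I_v$-module, tame inertia acts via $2g$ characters $\chi_1,\dots,\chi_{2g}$ to $\overline{\mathbb{F}}_\ell^\times$, each of which can be written in the form $\chi_i = \psi^{m_i}$ with respect to a common fundamental character $\psi$ of level $N$ (chosen to be the l.c.m.\ of the levels of the individual $\chi_i$), where $m_i=\sum_{k=0}^{N-1}a_{ik}\ell^k$ and $a_{ik}\in\{0,1\}$; in particular $0\leq m_i\leq (\ell^N-1)/(\ell-1)$. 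For bad semistable reduction one applies this to the abelian quotient appearing in the toric--abelian--étale filtration of $A[\ell]$, with the toric and étale graded pieces contributing characters $\chi_\ell$ and $1$ respectively, both of which fit the stated format.

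I would then argue that the $m_i$ cannot all coincide. If they did, the common value would be Frobenius-fixed, hence a level-one character $\chi_\ell^a$ with $a\in\{0,1\}$; but the identity $\prod_i\chi_i=\det\rho_\ell=\chi_\ell^g$ would force $(\ell-1)\mid g(2a-1)$, i.e.\ $(\ell-1)\mid g$, contradicting $\ell\geq g+2$. Once we know the $m_i$ are not all equal, the image of tame inertia on the semisimplification, after diagonalization, is $\{\operatorname{diag}(\zeta^{m_1},\dots,\zeta^{m_{2g}}):\zeta\in\mathbb{F}_{\ell^N}^\times\}$, and such a diagonal matrix is scalar precisely when $\zeta$ lies in the subgroup of order $d:=\gcd(\ell^N-1,\{m_i-m_j\}_{i,j})$. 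The combinatorial heart of the argument is that $d$ must divide some nonzero difference $m_i-m_j$, whose absolute value is bounded by $(\ell^N-1)/(\ell-1)$, hence $d\leq (\ell^N-1)/(\ell-1)$ and the projective image on the semisimplification has order at least $(\ell^N-1)/d\geq \ell-1$. Passing from the semisimplification back to $A[\ell]$ can only make the projective image larger (since scalars on $A[\ell]$ map to scalars on $A[\ell]^{ss}$), giving $|\mathbb{P}G_\ell|\geq \ell-1$. I expect the main technical obstacle to be verifying that Raynaud applies uniformly in the bad semistable case (through the monodromy filtration on the Néron model) together with the combinatorial estimate on $d$; the remainder is formal.
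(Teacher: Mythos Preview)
Your argument is correct and reaches the same conclusion as the paper, but the structure is genuinely different. The paper sets $N=|\mathbb{P}G_\ell|$, exploits the relation $\psi_i(x)^{\ell^tN}=\psi_j(x)^{N}$ (valid because $y^N$ is scalar for every $y\in G_\ell$), and then splits into three cases according to whether some Jordan--H\"older factor of $A[\ell]$ under tame inertia has dimension $\geq 2$, whether both the trivial character and $\chi_\ell$ occur among the one-dimensional pieces, or whether all characters coincide; each case is handled by a separate order estimate. Your route is uniform: lifting all $2g$ inertial eigenvalue characters to powers $\psi^{m_i}$ of a single fundamental character of level $N=\operatorname{lcm}(n_i)$ (with base-$\ell$ digits in $\{0,1\}$), the projective image of tame inertia on the semisimplification is exactly $(\ell^N-1)/d$ with $d=\gcd(\ell^N-1,\{m_i-m_j\})$, and the digit bound $|m_i-m_j|\le(\ell^N-1)/(\ell-1)$ finishes at once. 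Your approach buys a case-free argument at the cost of the (easy) lcm-lifting step; the paper's version isolates the higher-dimensional case and in fact yields the slightly sharper bound $|\mathbb{P}G_\ell|\ge\ell$ there. One point you might make explicit: the claim that ``the common value would be Frobenius-fixed'' relies on the multiset $\{m_i \bmod \ell^N-1\}$ being stable under multiplication by $\ell$ (because the eigenvalues on each simple factor come in full Galois orbits), which is what forces a common $m$ to satisfy $(\ell-1)m\equiv 0$ and hence $m\in\{0,(\ell^N-1)/(\ell-1)\}$.
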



For the case of real multiplication our method is quite different from the one of \cite{MR0457455}: by appealing to group theory more than it is done in \cite{MR0457455}, we can completely avoid the use of Chebotarev's theorem, which would be the main obstacle in making Ribet's method effective. We show again that if $G_\ell$ is smaller than it could conceivably be (given the restrictions imposed by the Weil pairing and by the action of $\operatorname{End}_{\overline{K}}(A)$), then $A$ admits minimal isogenies of high degree: combined with the isogeny theorem, this gives the desired result.




As for the case of quaternionic multiplication, there is a general philosophy suggesting that -- at the level of Galois representations -- an abelian variety of dimension $2g$ with quaternionic multiplication by an algebra with center $L$ should behave as an abelian variety of dimension $g$ admitting multiplication by $L$, and indeed the case of section \ref{surf_sect_QM} turns out to be the easiest, the argument being very similar to that for elliptic curves without complex multiplication. More precisely, the Tate module decomposes as two copies of a 2-dimensional Galois representation, and we can apply techniques that are an essentially straightforward generalisation of those employed to show analogous results for elliptic curves, and that go back to Serre \cite{MR0387283} (cf. also \cite{MR1209248}).


Finally, in appendix \ref{surf_sect_Index} we show how to bound the index of $\operatorname{End}_{\overline{K}}(A)$ in any order in which it is contained, a result that is needed in the course of the proof of theorems \ref{surf_thm_GL2} and \ref{surf_thm_MainQM}.


\section{Preliminaries}
We collect in this section a number of results that are essentially well-known and that will form the basis for all our further discussion. Specifically, we recall a few fundamental properties of Galois representations attached to abelian varieties and an explicit form (due to Gaudron and Rémond) of the so-called Isogeny Theorem, first proved by Masser and W\"ustholz in a seminal series of papers, cf. especially \cite{MR1207211} and \cite{MR1217345}.

\subsection{Weil pairing, the multiplier of the Galois action}\label{surf_sect_Multiplier}
We fix once and for all a minimal $K$-polarization of $A$. This choice equips the Tate module $T_\ell(A)$ with the Weil pairing, a skew-symmetric, Galois-equivariant form
\[
\langle \cdot, \cdot \rangle : T_\ell(A) \times T_\ell(A) \to \mathbb{Z}_\ell(1),
\]
where $\mathbb{Z}_\ell(1)$ is the 1-dimensional Galois module the action on which is given by the cyclotomic character $\chi_\ell:\abGal{K} \to \mathbb{Z}_\ell^\times$. 
The Weil pairing is known to be nondegenerate on $A[\ell]$ as long as $\ell$ does not divide the degree of the given polarization. We know by \cite[Théorème 1.1]{PolarisationsEtIsogenies} that the degree of a minimal $K$-polarization on $A$ is at most $b(A/K)$: since all the bounds given in the main theorems are strictly larger than this number, we can restrict ourselves to primes that \textit{do not} divide the degree of our fixed polarization, and for which the Weil pairing is therefore nondegenerate. From now on, therefore, we assume that the choice of a minimal polarization has been made:

\smallskip

\noindent\textbf{Convention.} We assume implicitly that a minimal $K$-polarization of $A$ has been chosen. We only consider primes $\ell$ larger than $b(A/K)$, so the Weil pairing induced on $A[\ell]$ by this polarization is nondegenerate.

\medskip

The fact that $\langle \cdot , \cdot \rangle$ is Galois-equivariant means that $G_{\ell^\infty}$ is a subgroup of $\operatorname{GSp}(T_\ell(A), \langle \cdot,\cdot \rangle)$, the group of symplectic similitudes of $T_\ell(A)$ with respect to the Weil pairing, which we will also simply denote $\operatorname{GSp}(T_\ell(A))$. Choosing a $\mathbb{Z}_\ell$-basis of $T_\ell(A)$ we can then consider $G_{\ell^\infty}$ (resp.~$G_\ell$) as being a subgroup of $\operatorname{GSp}_4(\mathbb{Z}_\ell)$ (resp.~$\operatorname{GSp}_4(\mathbb{F}_\ell)$).

\medskip

Our interest in the Weil pairing stems from its relationship with the determinant (or, more precisely, the multiplier) of the Galois action. Let us describe the connection. Recall that if $\langle \cdot,\cdot \rangle$ is a skew-symmetric form, the \textbf{multiplier} of a symplectic similitude $B$ is the only scalar $\nu(B)$ such that $\langle Bv,Bw \rangle=\nu(B) \langle v, w \rangle$ for every $v,w$. The association $B \mapsto \nu(B)$ is then a homomorphism, whose kernel is the group $\operatorname{Sp}(\langle \cdot,\cdot \rangle)$ of symplectic isometries. In the case at hand, the Galois-equivariance of the Weil pairing implies that the composition
\[
\abGal{K} \xrightarrow{\rho_{\ell^\infty}} \operatorname{GSp}\left( T_\ell(A) \right) \xrightarrow{\nu} \mathbb{Z}_\ell^\times
\]
coincides with the cyclotomic character $\chi_\ell:\abGal{K} \to \mathbb{Z}_\ell^\times$, and it follows that 
$G_{\ell^\infty}$ is all of $\operatorname{GSp}(T_\ell(A))$ if and only if it contains $\operatorname{Sp}(T_\ell(A))$ and $\abGal{K} \xrightarrow{\chi_\ell}  \mathbb{Z}_\ell^\times$ is surjective. This latter condition is very easy to check:


\begin{lemma}\label{surf_lemma_Conclusion}
Suppose $\ell$ does not ramify in $K$: then $\abGal{K} \xrightarrow{\chi_\ell} \mathbb{Z}_\ell^\times$ is surjective.
In particular, if $G_\ell$ (resp.~$G_{\ell^\infty}$) contains $\operatorname{Sp}(A[\ell])$ (resp.~$\operatorname{Sp}(T_\ell(A))$) and $\ell$ does not divide the discriminant of $K$, the equality $G_\ell=\operatorname{GSp}(A[\ell])$ (resp. $G_{\ell^\infty}=\operatorname{GSp}(T_\ell(A))$) holds.
\end{lemma}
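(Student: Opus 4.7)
\textbf{The plan.} The statement has two essentially independent pieces: the surjectivity of the cyclotomic character (which is a purely arithmetic fact about $K$) and the deduction that $G_{\ell^\infty}=\operatorname{GSp}(T_\ell(A))$ (which is an immediate consequence of the first piece combined with the multiplier formalism recalled just above the lemma). I would prove them in that order.

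\textbf{First step: surjectivity of $\chi_\ell$.} The cyclotomic character factors as
\[
\abGal{K} \twoheadrightarrow \operatorname{Gal}\bigl(K(\mu_{\ell^\infty})/K\bigr) \hookrightarrow \operatorname{Gal}\bigl(\mathbb{Q}(\mu_{\ell^\infty})/\mathbb{Q}\bigr) \cong \mathbb{Z}_\ell^\times,
\]
so its image is all of $\mathbb{Z}_\ell^\times$ if and only if $K\cap \mathbb{Q}(\mu_{\ell^\infty})=\mathbb{Q}$. Set $F=K\cap\mathbb{Q}(\mu_{\ell^\infty})$. The extension $\mathbb{Q}(\mu_{\ell^\infty})/\mathbb{Q}$ is ramified only at $\ell$, so the same is true of $F/\mathbb{Q}$; but $F\subseteq K$, and by hypothesis $\ell$ is unramified in $K$, hence also in $F$. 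Thus $F/\mathbb{Q}$ is everywhere unramified, and Minkowski's theorem forces $F=\mathbb{Q}$.

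\textbf{Second step: passing from $\operatorname{Sp}$ to $\operatorname{GSp}$.} Recall from the discussion preceding the lemma the short exact sequence
\[
1 \longrightarrow \operatorname{Sp}(T_\ell(A)) \longrightarrow \operatorname{GSp}(T_\ell(A)) \stackrel{\nu}{\longrightarrow} \mathbb{Z}_\ell^\times \longrightarrow 1,
\]
and the fact that $\nu\circ\rho_{\ell^\infty}=\chi_\ell$. If $G_{\ell^\infty}\supseteq \operatorname{Sp}(T_\ell(A))$, then $G_{\ell^\infty}$ is the preimage under $\nu$ of its own image in $\mathbb{Z}_\ell^\times$, which by the first step is all of $\mathbb{Z}_\ell^\times$ (the hypothesis $\ell\nmid\operatorname{disc}(K)$ is exactly the condition that $\ell$ is unramified in $K$). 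Hence $G_{\ell^\infty}=\operatorname{GSp}(T_\ell(A))$. The mod-$\ell$ version is identical: reducing $\chi_\ell$ modulo $\ell$ yields a surjection onto $\mathbb{F}_\ell^\times$ (or, equivalently, one may rerun the Minkowski argument with $\mu_\ell$ in place of $\mu_{\ell^\infty}$), and one concludes by the analogous exact sequence for $\operatorname{GSp}(A[\ell])$.

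\textbf{Expected difficulty.} There is essentially no obstacle here: the only non-formal ingredient is the appeal to Minkowski's theorem (no nontrivial unramified extensions of $\mathbb{Q}$), and this is a one-line input. Everything else is bookkeeping with the multiplier homomorphism.
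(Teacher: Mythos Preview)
Your proof is correct and follows essentially the same route as the paper: both reduce surjectivity of $\chi_\ell$ to showing $K\cap\mathbb{Q}(\mu_{\ell^n})=\mathbb{Q}$ (the paper works at finite level, you pass directly to $\mu_{\ell^\infty}$), and both conclude via Minkowski's theorem that an everywhere-unramified extension of $\mathbb{Q}$ is trivial. The deduction of the $\operatorname{GSp}$ statement from the $\operatorname{Sp}$ hypothesis is likewise handled the same way, though you spell out the preimage argument a bit more explicitly than the paper's ``immediate''.
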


\begin{proof}
The claim is equivalent to the fact that the equality $[K(\mu_{\ell^n}):K]=\varphi(\ell^n)$ holds for all $n \geq 1$. In particular it suffices to show that $K$ and $\mathbb{Q}(\mu_{\ell^n})$ are linearly disjoint over $\mathbb{Q}$, and since $\mathbb{Q}(\mu_{\ell^n})/\mathbb{Q}$ is Galois it suffices to show that they intersect trivially. Set $L:=K \cap \mathbb{Q}(\mu_{\ell^n})$. Now on one hand $\ell$ is unramified in $K$, so it is \textit{a fortiori} unramified in $L$; on the other hand, every prime different from $\ell$ is unramified in $\mathbb{Q}(\mu_{\ell^n})$, so it is also unramified in $L$: it follows that $L$ is unramified everywhere, that is, $L=\mathbb{Q}$ as claimed.
The second statement is immediate.
\end{proof}

%

\subsection{The isogeny theorem}\label{surf_sect_IsogenyTheorem}
For future reference we introduce here the main tool that will make all the explicit estimates possible. The crucial result is the isogeny theorem of Masser and W\"ustholz \cite{MR1207211} \cite{MR1217345}, in the following completely explicit form proved by Gaudron and Rémond :

\begin{theorem}{(Isogeny Theorem, \cite[Theorem 1.4]{PolarisationsEtIsogenies})}\label{surf_thm_Isogeny}
Let $b(A/K)$ be as in definition \ref{surf_def_bFunction}. For every $K$-abelian variety $A$ and for every $K$-abelian variety $A^*$ that is $K$-isogenous to $A$, there exists a $K$-isogeny $A^* \to A$ whose degree is bounded by $b(A/K)$.
\end{theorem}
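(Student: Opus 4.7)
The plan is to realize a small-degree $K$-isogeny $A^* \to A$ as a short vector in the finitely generated $\mathbb{Z}$-module $\operatorname{Hom}_K(A^*, A)$, and to control the geometry of this lattice in terms of the Faltings height of $A$ via a period-theoretic input of transcendence-theoretic flavour.

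First I would reduce to an isotypic situation. By Poincaré reducibility over $K$ one has a $K$-isogeny $A \sim \prod_i B_i^{n_i}$ with the $B_i$ pairwise non-$K$-isogenous $K$-simple factors; any $A^*$ that is $K$-isogenous to $A$ admits the same decomposition with the same simple factors up to $K$-isogeny, so the problem splits as a product of problems over each isotypic component and one may assume $A \sim B^n$ for a single $K$-simple abelian variety $B$. Setting $D := \operatorname{End}_K^0(B)$, which is a division algebra of finite dimension over $\mathbb{Q}$, one has $\operatorname{Hom}_K^0(A^*,A) \cong M_n(D)$, inside which $\Lambda := \operatorname{Hom}_K(A^*, A)$ sits as a full-rank $\mathbb{Z}$-lattice. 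Fixing an initial (possibly huge) $K$-isogeny $f_0 : A^* \to A$ identifies the nonzero elements of $\Lambda$ that are again $K$-isogenies with an open cone in $\Lambda$, and every other $K$-isogeny $A^* \to A$ differs from $f_0$ by an element of $\operatorname{End}_K(A)$ acting on the left.

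Next, fix a polarization of $A$ and equip $\Lambda \otimes \mathbb{R}$ with the associated Rosati quadratic form $\|f\|^2 := \operatorname{Tr}(f^\vee \circ f)$. The degree of an isogeny $f$ is, up to the reduced-norm factors coming from $D$, a power of $\|f\|^2$ of a controlled exponent bounded in terms of $\dim A$, so a short vector of $\Lambda$ with respect to this norm produces a $K$-isogeny of correspondingly small degree. Minkowski's second theorem then reduces the problem to giving an upper bound on the covolume of $\Lambda$ in $\Lambda \otimes \mathbb{R}$ and to checking that the successive minima have the right order of growth in $h(A)$, $\dim A$ and $[K:\mathbb{Q}]$.

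The hard step is precisely this covolume (and successive-minima) bound, and it is where the real transcendence-theoretic input enters: one needs the Masser--Wüstholz period theorem, in the sharpened and completely explicit form due to Gaudron--Rémond, to produce a lower bound on the shortest nonzero period of the simple factor $B$ in terms of $h(A)$, $g=\dim A$ and $d=[K:\mathbb{Q}]$. This period theorem is itself proved via Wüstholz's analytic subgroup theorem applied to $A \times A^*$, roughly by showing that a hypothetical absence of small isogenies would force the existence of a non-zero analytic Lie subgroup of $\operatorname{Lie}(A \times A^*)$ violating Wüstholz's theorem for the chosen period. Once this period bound is in hand, a Minkowski-type reduction argument (combined with standard Faltings-height comparisons between the period lattice and $\Lambda$) yields the existence of some nonzero $f \in \Lambda$ which is a $K$-isogeny of degree at most $b(A/K)$. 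In a plan I would treat the Gaudron--Rémond period theorem as a black box and focus the bookkeeping on how its explicit numerical shape propagates through the Minkowski step to reproduce exactly the function $b(d,g,h)$ of Definition \ref{surf_def_bFunction}; the transcendence-theoretic heart of the argument, which is by far the deepest ingredient, is the one step I would not attempt to redo from scratch.
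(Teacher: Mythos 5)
This statement is not proved in the paper at all: it is imported verbatim as Theorem~1.4 of Gaudron and Rémond's \emph{Polarisations et isogénies}, and the paper treats it as a black box from which all the explicit bounds flow. Your sketch is therefore not being measured against any argument in this document; what it records is a reasonable high-level outline of the Masser--Wüstholz / Gaudron--Rémond strategy underlying the citation (split into isotypic pieces via Poincaré reducibility, view $\operatorname{Hom}_K(A^*,A)$ as a lattice with the Rosati norm, run Minkowski-type geometry of numbers, and feed in the period theorem --- itself resting on Wüstholz's analytic subgroup theorem --- as the covolume input). Since you explicitly declare the period theorem a black box and only promise to ``propagate its numerical shape'' through the Minkowski step, your proposal has essentially the same epistemic status as the paper's own citation: it describes the shape of the argument without supplying the transcendence-theoretic core, and it does not attempt the considerable bookkeeping needed to arrive at the precise function $b(d,g,h)$ of Definition~\ref{surf_def_bFunction}.

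Two small points of caution in the outline itself. First, ``every other $K$-isogeny $A^*\to A$ differs from $f_0$ by an element of $\operatorname{End}_K(A)$'' should read $\operatorname{End}_K^0(A)$: composing with the quasi-inverse of $f_0$ lands only in the isogeny algebra, not the integral endomorphism ring. Second, defining $\|f\|^2=\operatorname{Tr}(f^\vee\circ f)$ on $\operatorname{Hom}_K(A^*,A)$ requires transporting a polarization from $A$ to $A^*$ (or using $f_0$ to conjugate the Rosati form), and the resulting relation between $\|f\|$ and $\deg f$ carries exponents and reduced-norm factors that one must track carefully, since these are exactly what produce the exponent $\alpha(g)$ in $b(d,g,h)$. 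Neither point is fatal, but both are places where a fully written-out version would need more than the one line you give them.
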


It is very likely that the function $b(A/K)$ of definition \ref{surf_def_bFunction} is not the best possible one. Let us then introduce another function $b_0(A/K)$, which is by definition the best possible isogeny bound:
\begin{definition}\label{surf_def_b0Function}
For $A/K$ an abelian variety, let $b_0(A/K)$ be the smallest natural number such that, for every abelian variety $A^*/K$ that is $K$-isogenous to $A$, there exists a $K$-isogeny $A^* \to A$ of degree at most $b_0(A/K)$.
Also let $b_0(A/K;d)=\max_{[K':K] \leq d} b_0(A/K')$, where the maximum is taken over the extensions of $K$ of degree at most $d$.
\end{definition}

It is clear that the isogeny theorem implies that $b_0(A/K)$ and $b_0(A/K;d)$ are finite, and that $b_0(A/K;d) \leq b(d[K:\mathbb{Q}], \dim A, h(A))$. Whenever possible, we will state our results in terms of $b_0$ instead of $b$: in some situations, however, in order to avoid cumbersome expressions involving maxima we simply give bounds in terms of the function $b$.

\subsection{Serre's lifting lemma}
We conclude this section of preliminaries with a well-known lemma of Serre:

\begin{lemma}\label{surf_lemma_LiftingEndZ} Let $n$ be a positive integer, $\ell \geq 5$ be a prime, and $H$ be a closed subgroup of $\operatorname{Sp}_{2n}(\mathbb{Z}_\ell)$ whose projection modulo $\ell$ contains $\operatorname{Sp}_{2n}(\mathbb{F}_\ell)$: then $H=\operatorname{Sp}_{2n}(\mathbb{Z}_\ell)$. Likewise, let $G$ be a closed subgroup of $\operatorname{GSp}_{2n}(\mathbb{Z}_\ell)$ whose projection modulo $\ell$ contains $\operatorname{Sp}_{2n}(\mathbb{F}_\ell)$: then $G'=\operatorname{Sp}_{2n}(\mathbb{Z}_\ell)$.
\end{lemma}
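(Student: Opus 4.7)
The plan is to show, by induction on $k \geq 1$, that $H$ surjects onto $\operatorname{Sp}_{2n}(\mathbb{Z}/\ell^k\mathbb{Z})$; since $H$ is closed and $\operatorname{Sp}_{2n}(\mathbb{Z}_\ell) = \varprojlim_k \operatorname{Sp}_{2n}(\mathbb{Z}/\ell^k\mathbb{Z})$, this forces $H = \operatorname{Sp}_{2n}(\mathbb{Z}_\ell)$. The base case $k=1$ is precisely the hypothesis, so only the inductive step requires work.

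For the inductive step I use the standard identification of the congruence kernel $K_k := \ker\bigl(\operatorname{Sp}_{2n}(\mathbb{Z}/\ell^{k+1}\mathbb{Z}) \to \operatorname{Sp}_{2n}(\mathbb{Z}/\ell^k\mathbb{Z})\bigr)$ with the Lie algebra $\mathfrak{sp}_{2n}(\mathbb{F}_\ell)$ via $I + \ell^k X \mapsto X$; the conjugation action of $\operatorname{Sp}_{2n}(\mathbb{Z}/\ell^{k+1}\mathbb{Z})$ on $K_k$ factors through the adjoint representation of $\operatorname{Sp}_{2n}(\mathbb{F}_\ell)$. Writing $H_{k+1}$ for the image of $H$ modulo $\ell^{k+1}$, the inductive hypothesis gives $H_{k+1} \twoheadrightarrow \operatorname{Sp}_{2n}(\mathbb{Z}/\ell^k\mathbb{Z})$, and the intersection $M := H_{k+1} \cap K_k$ is an $\operatorname{Sp}_{2n}(\mathbb{F}_\ell)$-submodule of $\mathfrak{sp}_{2n}(\mathbb{F}_\ell)$. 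Since the adjoint representation of $\operatorname{Sp}_{2n}(\mathbb{F}_\ell)$ on its Lie algebra is irreducible for $\ell \geq 5$, either $M = 0$ or $M = \mathfrak{sp}_{2n}(\mathbb{F}_\ell)$; the second alternative closes the induction.

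The remaining case $M = 0$ would yield a section of the reduction $H_{k+1} \to \operatorname{Sp}_{2n}(\mathbb{Z}/\ell^k\mathbb{Z})$, and I exclude it via symplectic transvections. Choose $T = I + E$ with $E \in \mathfrak{sp}_{2n}(\mathbb{Z})$, $E^2 = 0$, and $E \not\equiv 0 \pmod \ell$ (e.g., a rank-one symmetric block placed in the off-diagonal slot of the standard symplectic form); such $T$ lies in $\operatorname{Sp}_{2n}(\mathbb{Z})$ since $E^t J E = -J E^2 = 0$. Since $T^m = I + mE$, the image of $T$ in $\operatorname{Sp}_{2n}(\mathbb{Z}/\ell^k\mathbb{Z})$ has order exactly $\ell^k$, so a section would supply a lift $\widetilde T = I + \widetilde X$ in $\operatorname{Sp}_{2n}(\mathbb{Z}/\ell^{k+1}\mathbb{Z})$ (with $\widetilde X \equiv E \pmod{\ell^k}$, hence $\widetilde X^2 \equiv 0 \pmod{\ell^k}$) of that same order. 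A direct binomial calculation yields
\[
(I + \widetilde X)^\ell \equiv I + \ell \widetilde X \pmod{\ell^{k+1}},
\]
because for $2 \leq i \leq \ell - 1$ the coefficient $\binom{\ell}{i}$ is divisible by $\ell$ while $\widetilde X^i$ contains at least one copy of $\widetilde X^2$, contributing an extra factor $\ell^k$, and the tail $\widetilde X^\ell = (\widetilde X^2)^{(\ell-1)/2} \widetilde X$ contains $(\ell-1)/2 \geq 2$ copies of $\widetilde X^2$ and is therefore divisible by $\ell^{2k}$ — and this is exactly where the hypothesis $\ell \geq 5$ enters. Iterating the formula $k$ times gives $\widetilde T^{\ell^k} \equiv I + \ell^k \widetilde X \equiv I + \ell^k E \not\equiv I \pmod{\ell^{k+1}}$, contradicting that $\widetilde T$ has order $\ell^k$.

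For the $\operatorname{GSp}$ statement I apply the $\operatorname{Sp}$-version to the closed derived subgroup $G' = \overline{[G, G]}$: the multiplier $\nu$ is a homomorphism to the abelian group $\mathbb{Z}_\ell^\times$, so $G' \subseteq \operatorname{Sp}_{2n}(\mathbb{Z}_\ell)$, and the image of $G'$ modulo $\ell$ equals $(G \bmod \ell)' \supseteq \operatorname{Sp}_{2n}(\mathbb{F}_\ell)' = \operatorname{Sp}_{2n}(\mathbb{F}_\ell)$, where the last equality is the perfectness of $\operatorname{Sp}_{2n}(\mathbb{F}_\ell)$ for $\ell \geq 5$. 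I expect the main technical obstacle to be the non-splitting (transvection) step, where the various powers of $\ell$ in the binomial expansion must be tracked carefully; the irreducibility of the adjoint representation, while essential, is a standard fact that can be quoted from modular representation theory.
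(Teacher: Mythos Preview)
Your argument is correct. The paper's own proof of the first statement is simply a citation to Serre's Lemme~1 in the \emph{Lettre à Vigneras}, whereas you have supplied a complete self-contained proof via the congruence filtration: irreducibility of the adjoint module $\mathfrak{sp}_{2n}(\mathbb{F}_\ell)$ to force $M\in\{0,K_k\}$, and a transvection computation to exclude the splitting $M=0$. This is essentially the classical argument underlying Serre's lemma, so your proof can be viewed as unpacking the citation rather than taking a different route. The deduction of the $\operatorname{GSp}$ statement from the $\operatorname{Sp}$ one is identical in both: pass to the derived subgroup, invoke perfectness of $\operatorname{Sp}_{2n}(\mathbb{F}_\ell)$ for $\ell\geq 5$, and apply the first part.

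Two small remarks on presentation. First, the ``iterating the formula $k$ times'' step is a little terse; a cleaner way to reach $\widetilde T^{\ell^k}\equiv I+\ell^k E\pmod{\ell^{k+1}}$ is to observe that $\widetilde X^j\equiv 0\pmod{\ell^{k+1}}$ for $j\geq 4$ (since $\widetilde X^4=(\widetilde X^2)^2$ is divisible by $\ell^{2k}$), so the binomial expansion of $(I+\widetilde X)^{\ell^k}$ has only four surviving terms, and $v_\ell\binom{\ell^k}{2}=v_\ell\binom{\ell^k}{3}=k$ for $\ell\geq 5$ kills the last two. Second, the irreducibility of the adjoint representation of $\operatorname{Sp}_{2n}(\mathbb{F}_\ell)$ on $\mathfrak{sp}_{2n}(\mathbb{F}_\ell)$ in odd characteristic is indeed standard (it holds whenever $\ell$ is very good for the root system, which for type $C_n$ means $\ell\neq 2$), so your appeal to it is justified.
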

\begin{proof}
The first statement is \cite[Lemme 1 on p. 52]{LettreVigneras}. The second part follows from applying the first to $G=H'$: indeed, the image modulo $\ell$ of $H'$ contains the derived subgroup of $\operatorname{Sp}_{2n}(\mathbb{F}_\ell)$, which (since $\ell \geq 5$) is again $\operatorname{Sp}_{2n}(\mathbb{F}_\ell)$, and the claim follows.
\end{proof}





\newcommand{\TameInertia}{I^t}
\section{Type I -- Trivial endomorphisms}\label{surf_sect_EndZ}
In this section we prove theorem \ref{surf_thm_MainZ}, that is, we establish an explicit surjectivity result under the assumption that $A/K$ is an abelian surface with $\operatorname{End}_{\overline{K}}(A)=\mathbb{Z}$. 
The material is organized as follows. In the first paragraph we recall classical results on the  maximal subgroups of $\operatorname{GSp}_4(\mathbb{F}_\ell)$; notice that -- thanks to lemmas \ref{surf_lemma_Conclusion} and \ref{surf_lemma_LiftingEndZ} -- showing theorem \ref{surf_thm_MainZ} essentially amounts to proving that $G_\ell$ is not contained in any maximal proper subgroup of $\operatorname{GSp}_4(\mathbb{F}_\ell)$. In the second paragraph we collect information about the action of inertia that allows us to conclude that some exceptional subgroups of $\operatorname{GSp}_4(\mathbb{F}_\ell)$ cannot arise as images of Galois representations. Theorem \ref{surf_thm_MainZ} then follows easily, as shown in the last paragraph.

\subsection{Group theory for $\operatorname{GSp}_4(\mathbb{F}_\ell)$}

We start by recalling the classification of the maximal subgroups of $\operatorname{GSp}_4(\mathbb{F}_\ell)$.
The result was first proved by Mitchell in \cite{MR1500986} (see also King's article in \cite{MR2187473} for a more modern account of the result), but we shall instead follow the approach of Aschbacher, who, in \cite{MR746539}, proved a general classification result for the maximal subgroups of the finite classical matrix groups. These maximal subgroups are classified into two categories: ``geometric" subgroups, in turn subdivided into 8 classes $\mathcal{C}_1,\ldots,\mathcal{C}_8$, and ``exceptional" (or ``class $\mathcal{S}$") subgroups.
Among the geometric classes introduced by Aschbacher we will only need to deal with $\mathcal{C}_1, \mathcal{C}_2$ and $\mathcal{C}_3$: 

\begin{definition}
A subgroup $G$ of $\operatorname{GSp}_4(\mathbb{F}_\ell)$ is said to be of class 

\begin{itemize}
\item $\mathcal{C}_1$, if it stabilizes a (totally singular or non-singular) subspace;

\item $\mathcal{C}_2$, if there exist 2-dimensional subspaces $V_1, V_2$ of $\mathbb{F}_\ell^4$ such that $\mathbb{F}_\ell^4 \cong V_1 \oplus V_2$, and
\[
G=\left\{ A \in \operatorname{GSp}_{4}(\mathbb{F}_\ell) \bigm\vert \exists \sigma \in S_{2} \text{ s.t. } AV_i \subseteq V_{\sigma(i)} \text{ for }i=1,2 \right\},
\]
where $S_{2}$ is the group of permutations of the two indices $1,2$.


\newcommand{\structure}{\ast}

\item $\mathcal{C}_3$, if there exists a $\mathbb{F}_{\ell^2}$-structure on $\mathbb{F}_\ell^4$ such that
\[
G=\left\{ A \in \operatorname{GSp}_4(\mathbb{F}_\ell) \bigm\vert \exists \sigma \in \operatorname{Gal}\left(\mathbb{F}_{\ell^2}/\mathbb{F}_\ell \right) : \forall \lambda \in \mathbb{F}_{\ell^2}, \forall v \in \mathbb{F}_\ell^4 \quad A(\lambda \structure v)=\sigma(\lambda) \structure Av  \right\},
\]
where we denote by $\structure$ the multiplication map $\mathbb{F}_{\ell^2} \times \mathbb{F}_\ell^4 \to \mathbb{F}_\ell^4$.
In this case, the set
\[
\left\{ A \in \operatorname{GSp}_4(\mathbb{F}_\ell) \bigm\vert \forall \lambda \in \mathbb{F}_{\ell^2}, \forall v \in \mathbb{F}_\ell^4 \quad A(\lambda \structure v)=\lambda \structure Av \right\}
\]
is a subgroup of $G$ of index $2$ which acts $\mathbb{F}_{\ell^2}$-linearly on $\mathbb{F}_\ell^4$ (for the $\mathbb{F}_{\ell^2}$-structure $\structure$).
\end{itemize}
\end{definition}

For completeness, we also include a precise definition of the exceptional class $\mathcal{S}$ (for more details cf.~\cite[Definition 2.1.3]{MR3098485}):
\begin{definition}\label{def_ClassS}
A subgroup $H$ of $\operatorname{GSp}_{4}(\mathbb{F}_\ell)$ is said to be of class $\mathcal{S}$ if and only if all of the following hold:
\begin{enumerate}
\item $\mathbb{P}H$ is almost simple;
\item $H$ does not contain $\operatorname{Sp}_{4}(\mathbb{F}_\ell)$;
\item $H^{\infty}:=\bigcap_{i \geq 1} H^{(i)}$ acts absolutely irreducibly on $\mathbb{F}_\ell^{4}$.
\end{enumerate}
\end{definition}

A complete list of maximal subgroups of $\operatorname{GSp}_{4}(\mathbb{F}_\ell)$ can be read off \cite[Tables 8.13 and 8.14]{MR3098485}:
\begin{theorem}\label{surf_thm_Classification}
Let $\ell>7$ be a prime number. Let $G$ be a maximal proper subgroup of $\operatorname{GSp}_4(\mathbb{F}_\ell)$ not containing $\operatorname{Sp}_4(\mathbb{F}_\ell)$. One of the following holds:
\begin{enumerate}
\item $G$ is of class $\mathcal{C}_1$;
\item $G$ is of class $\mathcal{C}_2$;
\item $G$ is of class $\mathcal{C}_3$;
\item $G$ is of class $\mathcal{S}$, and is isomorphic to $\operatorname{GL}_2(\mathbb{F}_\ell)$;
\item the projective image $\mathbb{P}G$ of $G$ has order at most 3840.
\end{enumerate}
\end{theorem}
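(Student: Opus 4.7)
The plan is to obtain the classification as an essentially direct consequence of Aschbacher's theorem \cite{MR746539} applied to $\operatorname{GSp}_4(\mathbb{F}_\ell)$, combined with the explicit tabulation of maximal subgroups of four-dimensional symplectic groups carried out in \cite[Tables 8.13 and 8.14]{MR3098485}. Aschbacher's theorem asserts that every maximal subgroup of a finite classical group belongs either to one of eight geometric classes $\mathcal{C}_1,\ldots,\mathcal{C}_8$ (characterised by the preservation of a subspace, a direct-sum decomposition, an extension-field structure, a tensor decomposition, a subfield structure, an extraspecial normaliser, a tensor-induced structure, or a classical form of different type) or to the exceptional class $\mathcal{S}$ (as described in Definition \ref{def_ClassS}). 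Classes $\mathcal{C}_1,\mathcal{C}_2,\mathcal{C}_3$ give directly cases (1)--(3) of the statement, so the remaining work is to understand $\mathcal{C}_4,\ldots,\mathcal{C}_8$ and $\mathcal{S}$.

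I would then go through the remaining geometric classes one by one. Class $\mathcal{C}_5$ is empty because $\mathbb{F}_\ell$ has no proper subfield. For dimension $4$, classes $\mathcal{C}_4$ and $\mathcal{C}_7$ force a decomposition $4 = 2\otimes 2$; the resulting central product or tensor-wreath subgroup has order bounded independently of $\ell$, so its projective image fits inside the $3840$ bound of case (5). Class $\mathcal{C}_6$, consisting of normalisers of an extraspecial $2$-group of order $2^{1+4}$ acting irreducibly, is of fixed finite order and again lands in (5). Class $\mathcal{C}_8$ is empty in the symplectic setting, since there is no properly smaller classical form of the same dimension stabilised by $\operatorname{Sp}_4$. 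The hypothesis $\ell>7$ eliminates the small-characteristic collisions and coincidences listed in \cite{MR3098485}.

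The only substantive task is the treatment of class $\mathcal{S}$: here $\mathbb{P}H$ is almost simple and $H$ acts absolutely irreducibly on $\mathbb{F}_\ell^4$ preserving the symplectic form up to a scalar. By appealing to the classification of finite simple groups together with the list of their faithful projective $4$-dimensional representations in characteristic $\ell$ (which is encoded in \cite[Table 8.14]{MR3098485}), the only family whose order grows with $\ell$ is given by the symmetric-cube embedding $\operatorname{SL}_2\hookrightarrow\operatorname{Sp}_4$, extended by the similitude character to yield $H\cong\operatorname{GL}_2(\mathbb{F}_\ell)$; this is case (4). Every other $\mathcal{S}$-class subgroup (coming from socles such as $A_6$, $S_6$, $\operatorname{PSL}_2(\mathbb{F}_7)$, $A_7$, etc.) has projective image of order bounded by $3840$, giving case (5). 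The main obstacle of the argument is precisely this case analysis for $\mathcal{S}$, but it has already been carried out in \cite{MR3098485}, so the proof ultimately reduces to a careful bookkeeping over that reference.
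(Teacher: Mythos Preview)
Your approach matches the paper's: the paper gives no argument beyond the sentence ``A complete list of maximal subgroups of $\operatorname{GSp}_{4}(\mathbb{F}_\ell)$ can be read off \cite[Tables 8.13 and 8.14]{MR3098485}'', so both proofs ultimately defer to the same reference, and your write-up is simply a more detailed gloss on what those tables contain.

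One small inaccuracy worth correcting: your disposal of classes $\mathcal{C}_4$ and $\mathcal{C}_7$ is not quite right. A tensor decomposition $V_1\otimes V_2$ with $\dim V_i=2$ would, if it occurred, yield a subgroup whose order \emph{does} grow with $\ell$ (roughly $|O_2(\mathbb{F}_\ell)|\cdot|\operatorname{Sp}_2(\mathbb{F}_\ell)|$), not a group of bounded order. The actual reason these classes do not appear in the tables for $\operatorname{Sp}_4$ is different: for $\mathcal{C}_7$, the form obtained from $\operatorname{Sp}_2\otimes\operatorname{Sp}_2$ is symmetric rather than alternating, so the class is empty; for $\mathcal{C}_4$, the candidate $O_2^{\pm}\otimes\operatorname{Sp}_2$ is not maximal (it is contained in a $\mathcal{C}_2$ or $\mathcal{C}_3$ subgroup). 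Since you defer to \cite{MR3098485} anyway this does not create a genuine gap, but the stated reason should be amended.
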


\medskip


Being of class $\mathcal{S}$ in Aschbacher's classification, maximal subgroups of type (4) do not fit in a general geometric picture, but they can still be described in considerable detail. The following lemma follows immediately from the arguments of \cite[§5.3]{MR3098485}:


\begin{lemma}\label{surf_lemma_StructureOfCubicStabilizer}
Let $\ell >7$ be a prime and $G$ be a maximal subgroup of $\operatorname{GSp}_4(\mathbb{F}_\ell)$ of type (4) in the above list. For every $g \in G$, the eigenvalues of $g$ can be written as $\lambda_1^3, \lambda_1^2 \lambda_2, \lambda_1 \lambda_2^2, \lambda_2^3$, where $\lambda_1$ and $\lambda_2$ are the roots of a second-degree polynomial with coefficients in $\mathbb{F}_\ell$.
\end{lemma}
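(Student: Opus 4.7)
The plan is to make explicit the isomorphism $G \cong \operatorname{GL}_2(\mathbb{F}_\ell)$ and read off the eigenvalues from the corresponding representation. By Aschbacher's classification of class $\mathcal{S}$ subgroups and the tables in \cite{MR3098485}, the only $4$-dimensional faithful irreducible representation of $\operatorname{GL}_2(\mathbb{F}_\ell)$ (for $\ell>7$) which lands in $\operatorname{GSp}_4$ is, up to twist by a character of $\det$, the symmetric cube $\operatorname{Sym}^3$ of the standard representation $V = \mathbb{F}_\ell^2$. So I would begin by recalling this classification and fixing a pair $(V,\omega)$ consisting of the standard $2$-dimensional representation together with its $\operatorname{SL}_2$-invariant symplectic form $\omega = e_1 \wedge e_2$.

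Next I would exhibit the symplectic structure on $\operatorname{Sym}^3 V$ explicitly: in the monomial basis $e_1^3,\, e_1^2 e_2,\, e_1 e_2^2,\, e_2^3$, the natural bilinear form coming from $\omega^{\otimes 3}$ (antisymmetrized) is non-degenerate and alternating, and an easy computation shows that $\operatorname{Sym}^3(\gamma)$ is a symplectic similitude with multiplier $(\det \gamma)^3$ for every $\gamma \in \operatorname{GL}_2(\mathbb{F}_\ell)$. Thus $\operatorname{Sym}^3$ realizes $\operatorname{GL}_2(\mathbb{F}_\ell)$ as a subgroup of $\operatorname{GSp}_4(\mathbb{F}_\ell)$ whose image, up to $\operatorname{GSp}_4(\mathbb{F}_\ell)$-conjugacy, coincides with $G$ (this is the content of §5.3 of \cite{MR3098485}).

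Finally, I would compute eigenvalues. Given $g \in G$, pick $\gamma \in \operatorname{GL}_2(\mathbb{F}_\ell)$ with $\operatorname{Sym}^3(\gamma)$ conjugate to $g$, and let $P(T) \in \mathbb{F}_\ell[T]$ be the characteristic polynomial of $\gamma$, with roots $\lambda_1, \lambda_2 \in \overline{\mathbb{F}_\ell}$. After extending scalars to $\overline{\mathbb{F}_\ell}$, one can diagonalize $\gamma$ in a basis $f_1, f_2$ with $\gamma f_i = \lambda_i f_i$ (in the non-semisimple case, one works in a basis in which $\gamma$ is upper triangular and notes that $\operatorname{Sym}^3(\gamma)$ is then also triangular with the same diagonal entries). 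The monomials $f_1^{3-i} f_2^i$ form a basis of $\operatorname{Sym}^3 V \otimes \overline{\mathbb{F}_\ell}$ on which $\operatorname{Sym}^3(\gamma)$ acts diagonally with eigenvalues $\lambda_1^3,\, \lambda_1^2 \lambda_2,\, \lambda_1 \lambda_2^2,\, \lambda_2^3$, and $\lambda_1,\lambda_2$ are by construction the roots of the quadratic polynomial $P(T) \in \mathbb{F}_\ell[T]$, as required.

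The only genuinely non-trivial step is the identification of $G$ as the image of $\operatorname{Sym}^3$, which is the reason the proof is attributed to the cited source; once this identification is granted, the eigenvalue statement is a direct computation with highest-weight theory for $\operatorname{SL}_2$ and requires no further ingredients.
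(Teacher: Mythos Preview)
Your proposal is correct and follows essentially the same route as the paper: both identify $G$ (via \cite[\S5.3]{MR3098485}) with the image of $\operatorname{GL}_2(\mathbb{F}_\ell)$ under its unique irreducible $4$-dimensional symplectic representation, and then read off the eigenvalues of $\operatorname{Sym}^3(\gamma)$ from those of $\gamma$. Your write-up is in fact more careful than the paper's (you handle the non-semisimple case explicitly and spell out the symplectic form), and you correctly call the representation $\operatorname{Sym}^3$---the paper's phrase ``3$^{rd}$ exterior power'' is a slip, as the stated eigenvalues make clear. One small remark: your parenthetical ``up to twist by a character of $\det$'' is unnecessary here and, if taken literally, would alter the eigenvalue pattern; the cited reference gives the representation on the nose as $\operatorname{Sym}^3$, so you can simply drop that caveat.
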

\begin{proof}
In the notation of \cite[§5.3]{MR3098485}, as a representation of $\operatorname{GL}_2(\mathbb{F}_\ell)$ the module $A[\ell]$ is isomorphic to $V_4$, the 3$^{rd}$ exterior power of the standard representation of $\operatorname{GL}_2(\mathbb{F}_\ell)$ (cf.~\cite[Proposition 5.3.6]{MR3098485}). Let
\[
\sigma_4:\operatorname{GL}_2(\mathbb{F}_\ell) \to \operatorname{GSp}_4(\mathbb{F}_\ell) \hookrightarrow \operatorname{GL}(V_4)
\]
be the morphism giving $V_4$ its structure of $\operatorname{GL}_2(\mathbb{F}_\ell)$-module: then for every $\gamma \in \operatorname{GL}_2(\mathbb{F}_\ell)$ the eigenvalues of $\sigma_4(\gamma)$ are $\lambda_1^3,\lambda_1^2\lambda_2,\lambda_1\lambda_2^2,\lambda_2^3$, where $\lambda_1,\lambda_2$ are the eigenvalues of $\gamma$.
\end{proof}
\begin{remark}
The condition $\ell > 7$ ensures the existence of maximal subgroups of this shape, cf.~\cite[Table 8.13]{MR3098485}.
\end{remark}

\begin{remark}\label{surf_rmk_StabCubic} Maximal subgroups of type (4) can be interpreted in terms of morphisms of algebraic groups: the unique irreducible 4-dimensional representation of the algebraic group $\operatorname{SL}_2$ is symplectic, so it gives rise to an embedding $\operatorname{SL}_2 \hookrightarrow \operatorname{Sp}_4$ with weights $-3,-1,1,3$. This representation extends to a representation $\operatorname{GL}_2 \to \operatorname{GSp}_4$, and subgroups of type (4) correspond to the $\mathbb{F}_\ell$-points of the image of this representation.
\end{remark}

\subsection{The action of inertia}
For the whole section $\PlaceOfK$ is a fixed place of $\mathcal{O}_K$ at which $A$ has \textit{semistable} reduction. We write $\ell$ for the rational prime below $\PlaceOfK$ and $e$ for the absolute ramification index of $\PlaceOfK$. We also let $I$ be the inertia group at $\PlaceOfK$ and $\TameInertia$ be the corresponding \textit{tame} inertia group, that is, the quotient of $I$ by its maximal pro-$\ell$ subgroup.
The action of $I$ on $A[\ell]$ is described by the following celebrated result of Raynaud:

\begin{theorem}{(\cite[Corollaire 3.4.4]{MR0419467})}\label{surf_thm_RaynaudpSchemas}
Let $V$ be a simple Jordan-H\"older quotient of $A[\ell]$ (as a module over $I$). Suppose that $V$ has dimension $n$ over $\mathbb{F}_\ell$. The action of $I$ on $A[\ell]$ factors through $\TameInertia$. Moreover, there exist integers $e_1, \ldots, e_n$ such that:
\begin{itemize}
\item $V$ has a structure of $\mathbb{F}_{\ell^n}$-vector space
\item the action of $\TameInertia$ on $V$ is given by a character $\psi:\TameInertia \to \mathbb{F}_{\ell^n}^\times$
\item $\psi=\varphi_1^{e_1} \ldots \varphi_n^{e_n}$, where $\varphi_1, \ldots, \varphi_n$ are the fundamental characters of $\TameInertia$ of level $n$
\item for every $i=1, \ldots, n$ the inequality $0 \leq e_i \leq e$ holds
\end{itemize}
\end{theorem}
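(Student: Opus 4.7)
The plan is to invoke Raynaud's structure theorem for finite flat commutative group schemes of type $(\ell,\dots,\ell)$, which is the tool tailor-made for describing such actions of tame inertia. First I would localize to $K_\PlaceOfK$: this preserves both the $I$-module structure of $A[\ell]$ and the ramification index $e$, so the reduction is harmless. The semistability of $A$ at $\PlaceOfK$ ensures, via the N\'eron model (more precisely, via a theorem of Grothendieck--Raynaud on the existence of flat prolongations), that $A[\ell]$ is the generic fibre of a finite flat commutative group scheme $\mathcal{G}$ over $\mathcal{O}_{K_\PlaceOfK}$ annihilated by $\ell$.

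Next I would apply Raynaud's classification, valid in the regime $e \le \ell - 1$ (comfortably satisfied in our applications, since $K$ is unramified at $\ell$, whence $e=1$). His main theorem establishes an equivalence between the category of finite flat commutative group schemes of type $(\ell,\dots,\ell)$ over $\mathcal{O}_{K_\PlaceOfK}$ and an explicit linear-algebraic category of ``Raynaud data''. Two consequences of this equivalence are immediate: the functor is fully faithful, and a simple object is entirely determined by a one-dimensional representation over a finite extension of $\mathbb{F}_\ell$. Combining these with the connected-\'etale sequence applied to $\mathcal{G}$, I would deduce that $I$ acts on $A[\ell]$ through its tame quotient $\TameInertia$, and that a simple Jordan-H\"older quotient $V$ of $\mathbb{F}_\ell$-dimension $n$ carries a canonical $\mathbb{F}_{\ell^n}$-structure on which $\TameInertia$ acts via a single character $\psi\colon \TameInertia \to \mathbb{F}_{\ell^n}^\times$.

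The final step is to express $\psi$ as a product of fundamental characters of level $n$ with exponents in $[0,e]$. This falls out of Raynaud's explicit description: the Hopf algebra of a simple such group scheme can be put in the normal form $\mathcal{O}_{K_\PlaceOfK}[X_1,\dots,X_n]/(X_i^\ell - c_i X_{\sigma(i)})$ for a cyclic permutation $\sigma$ and parameters $c_i$ whose valuations $e_i$ satisfy $0 \le e_i \le e$, with the upper bound being precisely where Raynaud's rigidity results have bite. Reading off the action of $\TameInertia$ on the generic fibre of this Hopf algebra yields $\psi = \varphi_1^{e_1} \cdots \varphi_n^{e_n}$, as desired.

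The hard part is of course setting up Raynaud's equivalence of categories in full generality, which requires a careful study of group schemes of type $(\ell,\dots,\ell)$ and of the way ramification interferes with flat prolongation; I would not attempt to reproduce this and would simply cite \cite{MR0419467}, treating the statement as a black box for the rest of the section.
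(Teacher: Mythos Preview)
The paper does not prove this statement: it is quoted verbatim from Raynaud \cite[Corollaire 3.4.4]{MR0419467} and used as a black box, with only Remark~\ref{threefolds_rmk_SemistableReduction} added to justify the passage from good to semistable reduction via \cite[Lemma 4.9]{MR3211798} and Grothendieck \cite{SGAMonodromie}. Your proposal lands in the same place---you explicitly say you would cite \cite{MR0419467} and treat the result as a black box---so at the level of what actually gets written there is no divergence.

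One point in your sketch deserves care. You assert that semistability at $\PlaceOfK$ guarantees that $A[\ell]$ is the generic fibre of a finite flat group scheme over $\mathcal{O}_{K_\PlaceOfK}$. This is correct for \emph{good} reduction, but in the genuinely bad semistable case the $\ell$-torsion of the N\'eron model is only quasi-finite, not finite, over $\mathcal{O}_{K_\PlaceOfK}$ (the toric part of the special fibre obstructs finiteness). The actual argument in the semistable case, which the paper defers to \cite[Lemma 4.9]{MR3211798}, uses Grothendieck's monodromy filtration from \cite{SGAMonodromie} to separate off the toric and finite parts and then applies Raynaud's classification to the relevant subquotients. Since you are citing the result anyway this does not damage your write-up, but the sentence as it stands is not quite right and would mislead a reader trying to reconstruct the proof.
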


\begin{remark}\label{threefolds_rmk_SemistableReduction}
Raynaud's theorem is usually stated for places of \textit{good} reduction. However, as it was shown in \cite[Lemma 4.9]{MR3211798}, the extension to the semistable case follows easily upon applying results of Grothendieck \cite{SGAMonodromie}.
\end{remark}

In this section we only consider $A[\ell]$ as a $\TameInertia$-module; in particular, by a Jordan-H\"older quotient of $A[\ell]$ we implicitly mean ``under the action of $\TameInertia$". Notice that, in view of Raynaud's theorem, the map $\rho_\ell : \abGal{K} \to \operatorname{Aut} A[\ell]$ induces a map $\TameInertia \to \operatorname{Aut} A[\ell]$, which for simplicity of notation we still denote $\rho_\ell$.
Recall furthermore that the norm, taken from $\mathbb{F}_{\ell^n}$ to $\mathbb{F}_{\ell}$, of a fundamental character of level $n$ is the unique fundamental character of level 1; when $\ell$ is unramified in $K$, this fundamental character of level 1 is $\chi_\ell$, the cyclotomic character mod $\ell$.

\medskip

Recall the following fact (a consequence of the definition of the fundamental characters):
\begin{lemma}\label{surf_lemma_HigherCharacters}
If $\varphi : \TameInertia \to \mathbb{F}_{\ell^n}^\times$ is any fundamental character of level $n$, then $\varphi$ is surjective, hence in particular its image is a cyclic group of order $\ell^n-1$.
\end{lemma}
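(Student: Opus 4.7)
The plan is to identify each fundamental character explicitly as the post-composition of a canonical surjection with an automorphism of $\mathbb{F}_{\ell^n}^\times$. Let $K_\PlaceOfK$ denote the completion of $K$ at $\PlaceOfK$, $K_\PlaceOfK^{ur}$ its maximal unramified extension, and $\pi$ a uniformizer. By definition $\TameInertia = \operatorname{Gal}\big(K_\PlaceOfK^{tame}/K_\PlaceOfK^{ur}\big)$, and the standard description of the maximal tamely ramified extension gives $K_\PlaceOfK^{tame} = K_\PlaceOfK^{ur}\big(\pi^{1/m} : \gcd(m,\ell)=1\big)$; since the residue field of $K_\PlaceOfK^{ur}$ is $\overline{\mathbb{F}_\ell}$, it already contains $\mu_m$ for every such $m$.

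First I would build the canonical character $\theta_n : \TameInertia \to \mathbb{F}_{\ell^n}^\times$ in the usual way: take $m = \ell^n - 1$, fix an $m$-th root $\pi^{1/m}$ of $\pi$, and set $\theta_n(\sigma) = \sigma(\pi^{1/m})/\pi^{1/m} \in \mu_m(\overline{\mathbb{F}_\ell}) \cong \mathbb{F}_{\ell^n}^\times$. Because $\mu_m \subset K_\PlaceOfK^{ur}$ is pointwise fixed by $\TameInertia$, the definition does not depend on the chosen root and $\theta_n$ is a homomorphism. Surjectivity is then a direct instance of Kummer theory over $K_\PlaceOfK^{ur}$: for any $\zeta \in \mu_m$ the element $\zeta \cdot \pi^{1/m}$ is another $m$-th root of $\pi$, so there is a $K_\PlaceOfK^{ur}$-automorphism of $K_\PlaceOfK^{ur}(\pi^{1/m})$ sending $\pi^{1/m}$ to $\zeta \cdot \pi^{1/m}$; extending it to an element of $\TameInertia$ gives a $\sigma$ with $\theta_n(\sigma) = \zeta$.

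The final step is to recall that the $n$ fundamental characters of level $n$ used in theorem \ref{surf_thm_RaynaudpSchemas} are the $\operatorname{Gal}(\mathbb{F}_{\ell^n}/\mathbb{F}_\ell)$-conjugates of $\theta_n$, namely $\theta_n, \theta_n^\ell, \ldots, \theta_n^{\ell^{n-1}}$. Since the Frobenius $x \mapsto x^\ell$ is an automorphism of the cyclic group $\mathbb{F}_{\ell^n}^\times$ of order $\ell^n - 1$ (as $\gcd(\ell, \ell^n - 1) = 1$), each power $\theta_n^{\ell^i}$ inherits the surjectivity of $\theta_n$, and the image is then automatically a cyclic group of order $\ell^n - 1$. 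I do not expect any substantive obstacle here: the whole argument is essentially an unpacking of the definition of the tame inertia group, and it could equivalently be phrased through the isomorphism $\TameInertia \cong \varprojlim_{\gcd(m,\ell) = 1} \mu_m(\overline{\mathbb{F}_\ell})$.
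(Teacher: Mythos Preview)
Your argument is correct: it is the standard construction of the fundamental characters via Kummer theory over the maximal unramified extension, together with the observation that the Frobenius twist is an automorphism of $\mathbb{F}_{\ell^n}^\times$. The only point you skate over is that $\sigma(\pi^{1/m})/\pi^{1/m}$ lives \emph{a priori} in $\mu_m\big(\overline{K_\PlaceOfK}\big)$ rather than in the residue field, but since $\ell \nmid m$ the reduction map $\mu_m\big(\overline{K_\PlaceOfK}\big) \to \mu_m\big(\overline{\mathbb{F}_\ell}\big)$ is an isomorphism, so this is harmless.

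As for comparison with the paper: there is nothing to compare. The paper does not prove this lemma at all; it merely records it as ``a consequence of the definition of the fundamental characters'' and moves on. Your write-up is precisely the unpacking of that definition that the paper chose to omit, so in spirit you are doing exactly what the author had in mind.
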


\begin{lemma}\label{surf_lemma_dimJHQuotients}
Let $W$ be a simple Jordan-H\"older quotient of $A[\ell]$ of dimension $n$ and let $\psi$ be the associated character $\TameInertia \to \mathbb{F}_{\ell^n}^\times$. The image of $\psi$ is not contained in $\mathbb{F}_{\ell^k}^\times$ for any $k\bigm|n$, $k<n$.
\end{lemma}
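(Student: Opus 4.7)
The plan is to argue by contradiction, exploiting simplicity of $W$ together with the fact that an element of $\mathbb{F}_{\ell^k}^\times \subset \mathbb{F}_{\ell^n}^\times$ acts as an $\mathbb{F}_{\ell^k}$-scalar on the $\mathbb{F}_{\ell^n}$-vector space produced by Raynaud's theorem. So suppose toward a contradiction that there exists $k \mid n$ with $k<n$ such that $\psi(I^t) \subseteq \mathbb{F}_{\ell^k}^\times$.

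By Theorem \ref{surf_thm_RaynaudpSchemas}, $W$ carries an $\mathbb{F}_{\ell^n}$-vector space structure on which $I^t$ acts by multiplication by $\psi$. Since $W$ is simple over $\mathbb{F}_\ell[I^t]$ and the $\mathbb{F}_{\ell^n}$-action commutes with $I^t$, in fact $\dim_{\mathbb{F}_{\ell^n}} W = 1$ (otherwise any proper $\mathbb{F}_{\ell^n}$-subspace would be an $I^t$-invariant $\mathbb{F}_\ell$-subspace). Restricting scalars along $\mathbb{F}_{\ell^k} \hookrightarrow \mathbb{F}_{\ell^n}$, I view $W$ as an $\mathbb{F}_{\ell^k}$-vector space of dimension $n/k$.

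Under our assumption the $I^t$-action is by $\mathbb{F}_{\ell^k}$-scalar multiplication, so it commutes with all $\mathbb{F}_{\ell^k}$-linear endomorphisms of $W$. In particular, every $\mathbb{F}_{\ell^k}$-subspace of $W$ is $I^t$-stable. Choosing any $\mathbb{F}_{\ell^k}$-line $W' \subseteq W$, we obtain an $\mathbb{F}_\ell[I^t]$-submodule of $\mathbb{F}_\ell$-dimension $k$, with $0 < k < n$. This contradicts the simplicity of $W$ as an $\mathbb{F}_\ell[I^t]$-module, completing the proof.

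No real obstacle is expected here: the statement is essentially a formal consequence of Raynaud's classification plus the observation that scalar actions preserve every subspace. The only point to be careful about is that the $\mathbb{F}_{\ell^n}$-structure and the $I^t$-action are compatible (which is built into Theorem \ref{surf_thm_RaynaudpSchemas}), so that restricting scalars to $\mathbb{F}_{\ell^k}$ really does give $I^t$-invariant subspaces from $\mathbb{F}_{\ell^k}$-subspaces.
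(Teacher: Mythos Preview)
Your proof is correct. Both arguments proceed by contradiction and manufacture a proper nonzero $I^t$-stable $\mathbb{F}_\ell$-subspace of $W$, but they reach it by different routes. The paper extends scalars from $\mathbb{F}_\ell$ to $\mathbb{F}_{\ell^k}$, picks a common eigenvector $v$ for the (diagonalizable) $I^t$-action on $W \otimes_{\mathbb{F}_\ell} \mathbb{F}_{\ell^k}$, and then descends the $\mathbb{F}_{\ell^k}$-span of the Galois orbit $v,\sigma v,\ldots,\sigma^{k-1}v$ back to an $\mathbb{F}_\ell$-subspace of dimension at most $k$. You instead exploit the $\mathbb{F}_{\ell^n}$-structure supplied by Raynaud directly: after restricting scalars to $\mathbb{F}_{\ell^k}$, the hypothesis $\psi(I^t)\subseteq\mathbb{F}_{\ell^k}^\times$ means $I^t$ acts by $\mathbb{F}_{\ell^k}$-scalars, so any $\mathbb{F}_{\ell^k}$-line is automatically $I^t$-stable of $\mathbb{F}_\ell$-dimension exactly $k$. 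Your route is a bit more economical---no base change, no descent argument---while the paper's version only needs that the eigenvalues lie in $\mathbb{F}_{\ell^k}$ and does not explicitly invoke the compatibility of the $\mathbb{F}_{\ell^n}$-structure with the $I^t$-action.
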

\begin{proof}
Suppose by contradiction that the image of $\psi$ is contained in $\mathbb{F}_{\ell^k}^\times$ for a certain $k \geq 1$, and let $\sigma$ be a generator of $\operatorname{Gal}\left(\mathbb{F}_{\ell^k}/\mathbb{F}_\ell \right)$. Since the action of $\TameInertia$ on $W$ can be diagonalized over $\mathbb{F}_{\ell^k}$, we can find a vector $v \in W \otimes_{\mathbb{F}_\ell} \mathbb{F}_{\ell^k}$ that is a common eigenvector for the action of $\TameInertia$. The $\mathbb{F}_{\ell^k}$-vector subspace of $W \otimes_{\mathbb{F}_\ell} \mathbb{F}_{\ell^k}$ spanned by $v, \sigma v, \ldots, \sigma^{k-1} v$ is by construction $\sigma$-stable, hence it descends to a $\mathbb{F}_\ell$-subspace $W'$ of $W$, and it is clear by construction that $W'$ is also stable under the action of $\TameInertia$. As $W$ is irreducible and $W'$ is nontrivial we must have $W'=W$, and since $\dim W' \leq k$ we have $n=\dim W \leq k$, contradiction.
\end{proof}

\begin{lemma}\label{surf_lemma_dim2}
Let $\ell>7$, and suppose $G_\ell$ is contained in a maximal subgroup of type (4) in the sense of theorem \ref{surf_thm_Classification}. Then every simple Jordan-H\"older quotient $W$ of $A[\ell]$ has dimension at most 2.
\end{lemma}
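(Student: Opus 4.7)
The plan is to argue by contradiction. Suppose $W$ is a simple Jordan--Hölder quotient of $A[\ell]$ of dimension $n\geq 3$, so that $n\in\{3,4\}$. By Theorem~\ref{surf_thm_RaynaudpSchemas}, $W$ carries the structure of a $1$-dimensional $\mathbb{F}_{\ell^n}$-vector space on which $I^t$ acts through a character $\psi:I^t\to\mathbb{F}_{\ell^n}^\times$. Viewed as an $\mathbb{F}_\ell$-linear endomorphism of $W\cong\mathbb{F}_{\ell^n}$, multiplication by $\psi(\tau)$ has characteristic polynomial $\prod_{i=0}^{n-1}\bigl(X-\psi(\tau)^{\ell^i}\bigr)$, so the eigenvalues of $\rho_\ell(\tau)$ on $W$ are precisely the Galois conjugates $\psi(\tau),\psi(\tau)^\ell,\ldots,\psi(\tau)^{\ell^{n-1}}$, each of which therefore occurs as an eigenvalue of $\rho_\ell(\tau)$ acting on $A[\ell]$.

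The first step is to exhibit $\tau\in I^t$ with $\psi(\tau)\notin\mathbb{F}_{\ell^2}$. When $n=4$, Lemma~\ref{surf_lemma_dimJHQuotients} applied with $k=2$ gives precisely this. When $n=3$, one has $\mathbb{F}_{\ell^2}\cap\mathbb{F}_{\ell^3}=\mathbb{F}_\ell$ inside $\overline{\mathbb{F}_\ell}$ because $\gcd(2,3)=1$; hence if $\psi$ took all its values in $\mathbb{F}_{\ell^2}^\times$, its image would in fact sit inside $\mathbb{F}_\ell^\times$, contradicting Lemma~\ref{surf_lemma_dimJHQuotients} for $k=1$. So a suitable $\tau$ exists in either case.

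The second step is to extract a contradiction from the hypothesis that $G_\ell$ is contained in a type (4) maximal subgroup $G$. Then $\rho_\ell(\tau)\in G$, and Lemma~\ref{surf_lemma_StructureOfCubicStabilizer} guarantees that its four eigenvalues are of the form $\lambda_1^3,\lambda_1^2\lambda_2,\lambda_1\lambda_2^2,\lambda_2^3$, where $\lambda_1,\lambda_2$ are roots of a quadratic polynomial with coefficients in $\mathbb{F}_\ell$. In particular $\lambda_1,\lambda_2\in\mathbb{F}_{\ell^2}$, so every eigenvalue of $\rho_\ell(\tau)$ lies in $\mathbb{F}_{\ell^2}$. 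This contradicts the fact that $\psi(\tau)\notin\mathbb{F}_{\ell^2}$ appears among those eigenvalues, and the lemma follows.

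I do not foresee any real difficulty: the argument is simply a confrontation between two competing constraints on the eigenvalues of $\rho_\ell(\tau)$, one coming from Raynaud (a large irreducible $I^t$-submodule forces eigenvalues to generate a correspondingly large extension of $\mathbb{F}_\ell$) and the other from the $\operatorname{Sym}^3$ shape of the type (4) embedding (all eigenvalues must lie in $\mathbb{F}_{\ell^2}$). The only mild subtlety is the $n=3$ case, where Lemma~\ref{surf_lemma_dimJHQuotients} does not directly prohibit values in $\mathbb{F}_{\ell^2}$, but the elementary coprimality $\gcd(2,3)=1$ bridges the gap.
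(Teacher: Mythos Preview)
Your proof is correct and follows essentially the same approach as the paper. Both arguments confront the eigenvalue constraint coming from Lemma~\ref{surf_lemma_StructureOfCubicStabilizer} (all eigenvalues lie in $\mathbb{F}_{\ell^2}$) with the nondegeneracy statement of Lemma~\ref{surf_lemma_dimJHQuotients}; you simply unpack the argument as an explicit contradiction with separate cases $n=3$ and $n=4$, whereas the paper states the conclusion directly and leaves the coprimality step for $n=3$ implicit.
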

\begin{proof}
The hypothesis and lemma \ref{surf_lemma_StructureOfCubicStabilizer} imply that all the eigenvalues of every element of $G_\ell$ lie in $\mathbb{F}_{\ell^2}$, hence in particular the same is true for the eigenvalues of the action of $\TameInertia$. It follows that the image of the character $\psi$ associated with $W$ is entirely contained in $\mathbb{F}_{\ell^2}$, and lemma \ref{surf_lemma_dimJHQuotients} shows that $W$ is of dimension at most 2.
\end{proof}

\begin{corollary}
Suppose $\ell>7$ is unramified in $K$ and $G_\ell$ is contained in a maximal subgroup of type (4) in the sense of theorem \ref{surf_thm_Classification}. Let $W$ be an $n$-dimensional Jordan-H\"older quotient of $A[\ell]$. The following are the only possibilities:
\begin{itemize}
\item $n=1$ and $\TameInertia$ acts trivially on $W$;
\item $n=1$ and $\TameInertia$ acts on $W$ through $\chi_\ell$;
\item $n=2$ and $\TameInertia$ acts on $W$ through a fundamental character of level $2$.
\end{itemize}
\end{corollary}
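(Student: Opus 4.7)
My plan is to combine Raynaud's theorem (Theorem \ref{surf_thm_RaynaudpSchemas}) with the dimension bound from Lemma \ref{surf_lemma_dim2} and the irreducibility constraint of Lemma \ref{surf_lemma_dimJHQuotients}, exploiting the fact that ``unramified in $K$'' forces the ramification index $e$ appearing in Raynaud's theorem to equal $1$.

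First I would invoke Lemma \ref{surf_lemma_dim2} to reduce to the cases $n=1$ and $n=2$. By Raynaud's theorem, the action of $I^t$ on $W$ is given by a character $\psi=\varphi_1^{e_1}\cdots\varphi_n^{e_n}$, where the $\varphi_i$ are the fundamental characters of level $n$ and $0 \le e_i \le e$. Since $\ell$ is unramified in $K$ we have $e=1$, so each exponent $e_i$ lies in $\{0,1\}$.

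For $n=1$, the unique fundamental character of level $1$ is the cyclotomic character $\chi_\ell$, so $\psi$ is either trivial (when $e_1=0$) or $\chi_\ell$ (when $e_1=1$), which matches the first two bullets.

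For $n=2$, the pair $(e_1,e_2)\in\{0,1\}^2$ gives four a priori possibilities: $(0,0)$, $(1,0)$, $(0,1)$, $(1,1)$. I would rule out the extreme cases using Lemma \ref{surf_lemma_dimJHQuotients}, which forbids the image of $\psi$ from being contained in $\mathbb{F}_\ell^\times$: the case $(0,0)$ gives the trivial character, and $(1,1)$ gives $\varphi_1\varphi_2$, which is precisely the norm $\mathrm{Nm}_{\mathbb{F}_{\ell^2}/\mathbb{F}_\ell}(\varphi_1)=\chi_\ell$ and therefore takes values in $\mathbb{F}_\ell^\times$. The remaining two choices $(1,0)$ and $(0,1)$ correspond to $\psi=\varphi_1$ or $\psi=\varphi_2$, i.e.~$\psi$ is a fundamental character of level $2$, which is the third bullet. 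I do not expect a genuine obstacle here; the only mildly subtle point is observing that $(1,1)$ produces the norm character and hence is excluded by Lemma \ref{surf_lemma_dimJHQuotients}.
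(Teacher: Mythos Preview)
Your proposal is correct and follows essentially the same approach as the paper: invoke Lemma~\ref{surf_lemma_dim2} to get $n\le 2$, then use Raynaud's theorem with $e=1$ to classify the possible characters. The paper's proof is a one-line reference to this classification, while you spell out the case analysis (in particular the elimination of the exponent pairs $(0,0)$ and $(1,1)$ via Lemma~\ref{surf_lemma_dimJHQuotients}), which is exactly the content the paper leaves implicit.
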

\begin{proof}
Lemma \ref{surf_lemma_dim2} implies $n \leq 2$, and theorem \ref{surf_thm_RaynaudpSchemas} classifies all simple Jordan-H\"older quotients of dimension at most 2 (notice that $\ell$ is unramified, so $e=1$).
\end{proof}

\smallskip

From now on we suppose that $\ell>7$ is unramified in $K$ and that $G_\ell$ is contained in a maximal subgroup of type (4) in the sense of theorem \ref{surf_thm_Classification}. We fix a Jordan-H\"older filtration of the $\TameInertia$-module $A[\ell]$, and we denote by $W_1,\ldots,W_k$ the simple subquotients of the filtration.

Furthermore, we let $m_0, m_1, m_2$ be the number of $W_i$ on which the action of $\TameInertia$ is respectively trivial, given by $\chi_\ell$, or given by a fundamental character of level 2. Clearly, by comparing dimensions, we must have
\begin{equation}\label{surf_eq_det}
m_0+m_1+2m_2=\dim_{\mathbb{F}_\ell} A[\ell]=4.
\end{equation}
Considering the determinant of the Galois action gives a second numerical relation:
\begin{lemma}
We have $m_1+m_2=2$.
\end{lemma}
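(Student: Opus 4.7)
The plan is to compute $\det \rho_\ell\big|_{\TameInertia}$ in two different ways and compare the results.

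On the one hand, the Weil pairing of Section~\ref{surf_sect_Multiplier} shows that $\rho_\ell$ takes values in $\operatorname{GSp}_4(\mathbb{F}_\ell)$ with multiplier equal to the mod-$\ell$ cyclotomic character. Since for any $B \in \operatorname{GSp}_{2g}(\mathbb{F}_\ell)$ we have $\det B = \nu(B)^g$ (as can be seen by working in a symplectic basis), and since $g = \dim A = 2$, this gives $\det \rho_\ell = \chi_\ell^2$ globally, hence also $\det \rho_\ell\big|_{\TameInertia} = \chi_\ell^2\big|_{\TameInertia}$.

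On the other hand, the determinant of $\rho_\ell$ restricted to $\TameInertia$ can be read off the Jordan-Hölder filtration: it equals the product of the determinants of the actions on each subquotient $W_i$. By the trichotomy of the preceding corollary, a subquotient of the first kind contributes a factor $1$; one of the second kind contributes $\chi_\ell$; and a 2-dimensional subquotient $W$ of the third kind, on which $\TameInertia$ acts via a fundamental character $\psi$ of level $2$ viewed in $\mathbb{F}_{\ell^2}^\times$, contributes (as an $\mathbb{F}_\ell$-representation) the determinant $\psi \cdot \psi^\ell = \psi^{1+\ell}$, which is precisely the norm $N_{\mathbb{F}_{\ell^2}/\mathbb{F}_\ell}(\psi)$. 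As recalled in the text, this norm is the fundamental character of level $1$, which (since $\ell$ is unramified in $K$) equals $\chi_\ell$. Multiplying these contributions gives
\[
\det \rho_\ell\big|_{\TameInertia} = \chi_\ell^{m_1 + m_2}\big|_{\TameInertia}.
\]

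Comparing the two expressions yields $\chi_\ell^{m_1 + m_2 - 2}\big|_{\TameInertia} = 1$. By Lemma~\ref{surf_lemma_HigherCharacters} the restriction of $\chi_\ell$ to $\TameInertia$ has order exactly $\ell - 1$, so this identity forces $m_1 + m_2 \equiv 2 \pmod{\ell - 1}$. But from \eqref{surf_eq_det} we have $0 \le m_1 + m_2 \le 4$, and the hypothesis $\ell > 7$ ensures $\ell - 1 > 4$; the only integer in $\{0,1,2,3,4\}$ congruent to $2$ modulo $\ell - 1$ is $2$ itself. Hence $m_1 + m_2 = 2$, as claimed. No real obstacle is anticipated; the only subtlety is the computation of the determinant on a 2-dimensional subquotient as a norm of the fundamental character, which is where the hypothesis that $\ell$ is unramified in $K$ is used.
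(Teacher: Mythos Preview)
Your proof is correct and follows essentially the same approach as the paper's: compute $\det\rho_\ell$ on $\TameInertia$ once via the Weil pairing (giving $\chi_\ell^2$) and once via the Jordan--H\"older filtration (giving $\chi_\ell^{m_1+m_2}$), then use that $\chi_\ell\big|_{\TameInertia}$ has exact order $\ell-1$ to force $m_1+m_2=2$. Your write-up is slightly more explicit about the norm computation on the $2$-dimensional subquotients and about the relation $\det=\nu^g$, but the argument is the same.
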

\begin{proof}
Let $W$ be a simple Jordan-H\"older subquotient $W$ of $A[\ell]$. The determinant of the action of $g \in \TameInertia$ on $W$ is either $1$ (if the action is trivial) or $\chi_\ell(g)$ (if the action is given by $\chi_\ell$ or by a fundamental character of level 2).
On the other hand, the determinant of the Galois action of $g \in \TameInertia$ on $A[\ell]$ is $\chi_\ell(g)^2$ by the properties of the Weil pairing, so we must have
\[
\prod_{W_i} \det\left(\rho_\ell(g):W_i \to W_i \right) = \chi_\ell(g)^2 \quad \forall g \in \TameInertia,
\]
or in another words $\chi_\ell^{m_1+m_2-2}(g)=1$ for all $g \in \TameInertia$. Since $\chi_\ell(\TameInertia)$ is a cyclic group of order $\ell-1$, this implies $\ell-1 \bigm\vert m_1+m_2-2$, and since $|m_1+m_2-2| \leq 2 < \ell-1$ we must in fact have $m_1+m_2-2=0$, as claimed.
\end{proof}

\begin{corollary}\label{surf_cor_EigenvaluesMultisets}
Suppose $\ell>7$ is unramified in $K$ and $G_\ell$ is contained in a maximal subgroup of type (4) in the sense of theorem \ref{surf_thm_Classification}.
For every $g \in \TameInertia$ the multiset of eigenvalues of $\rho_\ell(g)$ is one of the following, where we denote by $\varphi_1$ and $\varphi_2=\varphi_1^\ell$ the two fundamental characters of level 2:
\begin{enumerate}
\item $\left\{\chi_\ell(g),\chi_\ell(g),1,1\right\}$ 
\item $\left\{ \varphi_1(g), \varphi_2(g), \chi_\ell(g), 1 \right\}$ 
\item $\left\{ \varphi_1(g), \varphi_2(g), \varphi_1(g), \varphi_2(g) \right\}$ 
\end{enumerate}
\end{corollary}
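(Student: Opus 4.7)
The proof essentially falls out of the two lemmas immediately preceding the corollary. From the dimension count (equation \eqref{surf_eq_det}) and the determinant computation via the Weil pairing, we have the two numerical relations $m_0+m_1+2m_2=4$ and $m_1+m_2=2$. Subtracting yields $m_0=m_1=2-m_2$, and non-negativity of the $m_i$ forces $m_2 \in \{0,1,2\}$. The three candidate triples are therefore $(m_0,m_1,m_2)=(2,2,0),\,(1,1,1),\,(0,0,2)$; I would show that these correspond exactly to the three multisets listed in the statement.

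To go from the triple $(m_0,m_1,m_2)$ to the multiset of eigenvalues of $\rho_\ell(g)$ acting on $A[\ell]$, I would note that the characteristic polynomial of $\rho_\ell(g)$ factors as the product of the characteristic polynomials of $g$ acting on each simple subquotient $W_i$ of the chosen Jordan--Hölder filtration. A $1$-dimensional trivial subquotient contributes $\{1\}$; a $1$-dimensional subquotient on which $\TameInertia$ acts by $\chi_\ell$ contributes $\{\chi_\ell(g)\}$; and a $2$-dimensional subquotient on which $\TameInertia$ acts via a fundamental character $\psi$ of level $2$ contributes $\{\varphi_1(g),\varphi_2(g)\}$. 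Adding the contributions for each of the three candidate triples yields precisely the multisets of cases (1), (2), (3) of the statement.

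The only computation that requires comment is the last contribution. Here one realizes $W_i$ as a $1$-dimensional $\mathbb{F}_{\ell^2}$-vector space on which $g$ acts by the scalar $\psi(g) \in \mathbb{F}_{\ell^2}^\times$; viewed as an $\mathbb{F}_\ell$-linear endomorphism of the $2$-dimensional $\mathbb{F}_\ell$-space $W_i$, multiplication by $\psi(g)$ has characteristic polynomial equal to the characteristic polynomial of $\psi(g)$ over $\mathbb{F}_\ell$. Its roots in $\overline{\mathbb{F}_\ell}$ are thus the Galois conjugates $\psi(g)$ and $\psi(g)^\ell$, and since $\{\psi,\psi^\ell\}=\{\varphi_1,\varphi_2\}$ this is the multiset $\{\varphi_1(g),\varphi_2(g)\}$ (this identity remains valid also when $\psi(g)\in\mathbb{F}_\ell$, in which case $\varphi_1(g)=\varphi_2(g)$).

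I do not anticipate any real obstacle: the structural work has all been absorbed into the two preceding lemmas, and what remains is a short case analysis combined with the elementary eigenvalue computation above.
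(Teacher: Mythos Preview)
Your proposal is correct and follows essentially the same approach as the paper's own proof: solve the two numerical relations to obtain $m_0=m_1=2-m_2$, enumerate the three admissible triples, and read off the eigenvalue multiset from the Jordan--H\"older subquotients. Your write-up is slightly more detailed than the paper's (notably in justifying why a $2$-dimensional subquotient contributes $\{\varphi_1(g),\varphi_2(g)\}$ regardless of which fundamental character $\psi$ appears), but the argument is the same.
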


\begin{proof}
The multiset of eigenvalues of $\rho_\ell(g)$ is the union of the multisets of values taken by the characters that give the action of $g$ on the simple Jordan-H\"older factors $W_i$. In turn, these characters and their multiplicities are determined by the three numbers $m_0,m_1,m_2$, which by equation \eqref{surf_eq_det} and the previous lemma satisfy $m_1=m_0$ and $m_2=2-m_0$. The three cases in the statement correspond to the three possibilities $m_0=0$, $m_0=1$, and $m_0=2$.
\end{proof}

\smallskip

We can now show that groups of type (4) do not arise as images of Galois representations (at least when $\ell>7$ is unramified in $K$): 
\begin{proposition}\label{surf_prop_NoTwistedCubic}
Let $\ell>7$ be a prime unramified in $K$. Suppose there is a place $\PlaceOfK$ of $K$ of characteristic $\ell$ at which $A$ has semistable reduction: then $G_\ell$ is not contained in a group of type (4) in the sense of theorem \ref{surf_thm_Classification}.
\end{proposition}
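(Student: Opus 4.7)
The plan is to argue by contradiction: suppose $G_\ell$ is contained in a maximal subgroup $H$ of type (4). By Lemma \ref{surf_lemma_StructureOfCubicStabilizer}, every element of $G_\ell\subseteq H$ has eigenvalues of the cubic form $\{\lambda_1^3,\lambda_1^2\lambda_2,\lambda_1\lambda_2^2,\lambda_2^3\}$ with $\lambda_1,\lambda_2\in\mathbb{F}_{\ell^2}$ roots of a quadratic over $\mathbb{F}_\ell$ (so either both in $\mathbb{F}_\ell$, or Frobenius conjugates of one another); equivalently, the multiset is $\lambda_1^3\cdot\{1,u,u^2,u^3\}$ with $u=\lambda_2/\lambda_1$. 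On the inertia side, Corollary \ref{surf_cor_EigenvaluesMultisets} gives three possible shapes for the eigenvalue multiset of $\rho_\ell(g)$ for $g\in I^t$. For each of these three shapes I would exhibit a single $g\in I^t$ for which the two descriptions cannot coexist.

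The crucial tool on the inertia side is Lemma \ref{surf_lemma_HigherCharacters}: the level-$2$ fundamental character $\varphi_1\colon I^t\to\mathbb{F}_{\ell^2}^\times$ is surjective, so I can choose $g$ so that $\alpha:=\varphi_1(g)$ is any prescribed element of $\mathbb{F}_{\ell^2}^\times$, in particular a generator. Likewise $\chi_\ell\colon I^t\to\mathbb{F}_\ell^\times$ is surjective, since the hypothesis that $\ell$ is unramified in $K$ makes $\chi_\ell$ coincide with the fundamental character of level $1$ on $I^t$.

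Case (3) is the cleanest. Taking $\alpha$ a generator of $\mathbb{F}_{\ell^2}^\times$, the multiset $\{\alpha,\alpha,\alpha^\ell,\alpha^\ell\}$ has two distinct values each with multiplicity $2$. A direct inspection of the four exponents $\{0,1,2,3\}$ shows that in odd characteristic the only $u$ making $\{1,u,u^2,u^3\}$ of that shape is $u=-1$; hence $\lambda_2=-\lambda_1$ and the cubic multiset is $\{\lambda_1^3,-\lambda_1^3,\lambda_1^3,-\lambda_1^3\}$, which matches the given one only if $\alpha^{\ell-1}=-1$. But $\alpha^{\ell-1}$ has order $(\ell^2-1)/(\ell-1)=\ell+1>2$, contradiction. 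Case (1) is handled by the identical argument applied to $c:=\chi_\ell(g)$ chosen to be a generator of $\mathbb{F}_\ell^\times$: the same pattern forces $c=-1$, contradicting $\operatorname{ord}(c)=\ell-1>2$ for $\ell>7$.

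Case (2) is the main obstacle and requires a finer Frobenius-orbit analysis. Again take $\alpha$ a generator of $\mathbb{F}_{\ell^2}^\times$; the unramified hypothesis gives $\chi_\ell(g)=N(\alpha)=\alpha^{\ell+1}$, so the multiset is $\{\alpha,\alpha^\ell,\alpha^{\ell+1},1\}$ --- four distinct elements with Frobenius-orbit shape $(2,1,1)$ and \emph{two} distinct Frobenius-fixed values. On the cubic side the two potential Frobenius orbits are $\{\lambda_1^3,\lambda_2^3\}$ and $\{\lambda_1^2\lambda_2,\lambda_1\lambda_2^2\}$. The sub-case $\lambda_1,\lambda_2\in\mathbb{F}_\ell$ is immediately excluded because it places all four eigenvalues in $\mathbb{F}_\ell$, contradicting $\alpha\notin\mathbb{F}_\ell$. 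In the remaining sub-case $\lambda_2=\lambda_1^\ell$ the second orbit is forced to have size $2$, since $\lambda_1^2\lambda_2=\lambda_1\lambda_2^2$ would give $\lambda_1^{\ell-1}=1$ and hence $\lambda_1\in\mathbb{F}_\ell$. Thus the cubic multiset contains either $0$ Frobenius-fixed values (when the other orbit also has size $2$) or a single fixed value repeated with multiplicity $2$ (when $\lambda_1^3=\lambda_2^3$), neither of which can reproduce the RHS's \emph{two distinct} fixed values $1$ and $\alpha^{\ell+1}$. This gives the required contradiction and completes the proof.
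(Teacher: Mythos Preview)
Your proof is correct and follows the same overall architecture as the paper: assume $G_\ell$ lies in a type-(4) subgroup, invoke Lemma~\ref{surf_lemma_StructureOfCubicStabilizer} for the cubic eigenvalue pattern, split according to the three cases of Corollary~\ref{surf_cor_EigenvaluesMultisets}, and in each case produce a single $g\in I^t$ for which the cubic pattern is incompatible with the inertia eigenvalues.

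The tactical execution is a little different. The paper encodes the cubic pattern by the system $\lambda_1\lambda_4=\lambda_2\lambda_3$, $\lambda_2\lambda_4=\lambda_3^2$, $\lambda_1\lambda_3=\lambda_2^2$ and checks, case by case, that no relabelling of the inertia eigenvalues satisfies it; in Case~(2) it takes the norm $\mathbb{F}_{\ell^2}\to\mathbb{F}_\ell$ of these equations and immediately obtains $\chi_\ell(g)^d=1$ for some $d\le 3$. You instead work with intrinsic invariants of the eigenvalue multiset: in Cases~(1) and~(3) you pin down the multiplicity pattern $(2,2)$ and force $u=-1$, while in Case~(2) you compare Frobenius-orbit structures and count $\mathbb{F}_\ell$-rational eigenvalues. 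Your Case~(2) analysis is a bit more elaborate than the paper's norm trick, but it is a clean and self-contained alternative; the paper's equations approach, conversely, spares you the sub-case analysis on whether $\lambda_1,\lambda_2\in\mathbb{F}_\ell$ or are conjugate.
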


\begin{proof}
Suppose on the contrary that $G_\ell$ is contained in a group of type (4). By lemma \ref{surf_lemma_StructureOfCubicStabilizer}, for any $g \in \abGal{K}$ the eigenvalues of $\rho_\ell(g)$ are of the form $\left\{ a^3, a^2d, ad^2, d^3\right\}$ for certain $a,d \in \mathbb{F}_{\ell^2}^\times$. This applies in particular to the tame inertia group $\TameInertia$: for every $g$ in $\TameInertia$, the eigenvalues of $\rho_\ell(g)$ lie in $\mathbb{F}_{\ell^2}^\times$, and -- taken in some order $\lambda_1, \lambda_2,\lambda_3,\lambda_4$ -- they satisfy the system of equations
\begin{equation}\label{surf_eq_EigenvaluesSL}
\lambda_1\lambda_4=\lambda_2\lambda_3, \; \lambda_2\lambda_4=\lambda_3^2, \; \lambda_1\lambda_3=\lambda_2^2.
\end{equation}

We now go through the cases listed in corollary \ref{surf_cor_EigenvaluesMultisets} and see that (for a suitably chosen $g \in \TameInertia$) there is no way to renumber the multiset of eigenvalues of $\rho_\ell(g)$ in such a way that the three equations above are all satisfied together, a contradiction that shows the result. Consider case 1. By lemma \ref{surf_lemma_HigherCharacters}, the condition $\ell > 7$ implies that the order of $\chi_\ell(\TameInertia)$ is at least 10, so there exists a $g \in \TameInertia$ with $\chi_\ell(g) \neq \pm 1$. Consider equations \eqref{surf_eq_EigenvaluesSL} for this specific $g$. If either $\lambda_2$ or $\lambda_3$ is $1$, then one of the last two equations reads $\chi_\ell(g)^d=1$ with $d=1$ or $2$, which contradicts $\chi_\ell(g) \neq 1,-1$. But if neither $\lambda_2$ nor $\lambda_3$ is $1$ then the only possibility is $\lambda_1=\lambda_4=1$, $\lambda_2=\lambda_3=\chi_\ell(g)$, for which none of the three equations is satisfied.
Likewise, in case 3 we can choose a $g \in \TameInertia$ such that $\varphi_1^{(2)}(g)$ is of order $\ell^2-1$, and -- independently of the numbering of the eigenvalues -- from equations \eqref{surf_eq_EigenvaluesSL} we obtain $\left(\varphi_1^{(2)}(g)\right)^2=\left(\varphi_2^{(2)}(g)\right)^2$, which using $\varphi_2^{(2)}=\left(\varphi_1^{(2)}\right)^\ell$ implies $\left(\varphi_1^{(2)}(g)\right)^{2(\ell-1)} = 1$, a contradiction. 
Finally, if we are in case 2, then taking the norm from $\mathbb{F}_{\ell^2}$ to $\mathbb{F}_{\ell}$ of the three equations \eqref{surf_eq_EigenvaluesSL} we find that for all $g \in \TameInertia$ there exists a positive integer $d \leq 3$ such that $\chi_\ell(g)^d = 1$, which again contradicts the fact that $\chi_\ell(\TameInertia)$ has order at least $10$.
\end{proof}

\medskip

Considering the action of inertia also allows us to exclude groups of type (5) in the list of theorem \ref{surf_thm_Classification}. The following proposition (which applies to abelian varieties of arbitrary dimension and with arbitrary endomorphism ring) gives a linear lower bound on the order of $\mathbb{P}G_\ell$:

\begin{proposition}\label{surf_prop_LowerBoundGell}
Let $A/K$ be any abelian variety of dimension $g$. Let $\ell$ be a prime such that there is a place $\PlaceOfK$ of $K$ of residual characteristic $\ell$ at which $A$ has either good or bad semistable reduction. If $\ell$ is unramified in $K$ and not less than $g+2$, then $|\mathbb{P}G_\ell| \geq \ell-1$.
\end{proposition}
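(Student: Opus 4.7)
My plan is to reduce the problem to the image of the tame inertia subgroup $\TameInertia$ at $\lambda$, which is procyclic. It is enough to exhibit a topological generator $\tau \in \TameInertia$ whose image in $\mathbb{P} G_\ell$ has order at least $\ell-1$, i.e.~such that $\rho_\ell(\tau)^k$ is not scalar for $1\le k\le \ell-2$. I would choose $\tau$ so that $\chi_\ell(\tau)=\omega$ is a generator of $\mathbb{F}_\ell^\times$; this is possible because $\chi_\ell$ is the level-$1$ fundamental character and is surjective on $\TameInertia$, exactly as in the proof of Lemma \ref{surf_lemma_Conclusion}.

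The first step is to apply Theorem \ref{surf_thm_RaynaudpSchemas} together with the semisimplicity of $A[\ell]$ as a $\TameInertia$-module (which holds since the image of $\TameInertia$ has order prime to $\ell$) to write $A[\ell]=\bigoplus_j W_j$ with each $W_j$ simple of dimension $n_j$, acted on through a character $\psi_j=\prod_{i=1}^{n_j}\varphi_i^{e_{ji}}$ with $e_{ji}\in\{0,1\}$ (the Raynaud digit condition, using $e=1$ since $\ell$ is unramified in $K$). The eigenvalues of $h:=\rho_\ell(\tau)$ on $A[\ell]\otimes\overline{\mathbb{F}}_\ell$ are then the values $\psi_j(\tau)^{\ell^s}$ for $s=0,\dots,n_j-1$, and $h^k$ being scalar amounts to requiring all of these, raised to the $k$-th power, to coincide with a common $\mu\in\mathbb{F}_\ell^\times$.

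The workhorse will be the following observation: for any non-trivial Raynaud character $\psi$ with digits in $\{0,1\}$, the image $\psi(\TameInertia)\subseteq\mathbb{F}_{\ell^n}^\times$ has order at least $\ell-1$. Indeed, writing $\psi(\tau)=\zeta^{E}$ with $\zeta$ a generator of $\mathbb{F}_{\ell^n}^\times$ and $E=\sum_{i:\,e_i=1}\ell^{i-1}$, the image has order $(\ell^n-1)/\gcd(\ell^n-1,E)\ge(\ell^n-1)/E\ge\ell-1$, because $E\le(\ell^n-1)/(\ell-1)$. Armed with this, I would run a case analysis on the $1$-dimensional summands. If both the trivial character and $\chi_\ell$ occur as $1$-dimensional summands, then $\mu=1=\omega^k$ forces $(\ell-1)\mid k$. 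If only one kind of $1$-dimensional summand appears but there is also a higher-dimensional $W_j$, the scalar $\mu$ is pinned down to $1$ or $\omega^k$, and applying the key observation to $\psi_j$ (respectively to $\psi_j\chi_\ell^{-1}$, whose inverse is again a Raynaud character with complementary digit pattern) gives $k\ge \ell-1$. The degenerate configurations in which every summand is $1$-dimensional of a single type are incompatible with the Weil pairing for $\ell>2$.

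The hard part will be the remaining case in which every $W_j$ has dimension at least $2$. Here I expect to use the symplectic self-duality forced by the Weil pairing: the involution $\psi\mapsto\chi_\ell\psi^{-1}$ must permute the Galois orbits of the $\psi_j$'s. When $A[\ell]$ is irreducible as a $\TameInertia$-module, this rigidifies $\psi_j$ into an identity of the shape $\psi_j^{1+\ell^{g}}=\chi_\ell$, from which the order of $\psi_j(\tau)$ modulo $\mathbb{F}_\ell^\times$ is forced to equal $\ell^{g}+1>\ell-1$. The multi-summand variant is handled by the same device, applied either to a self-paired summand or to a cross-pairing $W_j\leftrightarrow W_{j'}$, which produces a large-order cross-ratio character. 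This last step is delicate because one has to combine the Raynaud digit restriction with the symplectic structure to rule out small projective images; all the earlier cases reduce directly to the key observation together with the surjectivity of $\chi_\ell$ on $\TameInertia$.
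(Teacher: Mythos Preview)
Your overall setup --- restricting to the tame inertia group, invoking Raynaud's theorem, and analysing the eigenvalues of a procyclic generator --- is exactly what the paper does. Your cases where a $1$-dimensional summand is present are handled correctly and essentially match the paper's Cases~2 and~3 (the ``all $1$-dimensional of a single type'' case is ruled out by the determinant identity $\det\rho_\ell=\chi_\ell^g$, which is how the paper phrases it; your ``Weil pairing'' formulation amounts to the same thing).

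The genuine gap is in your ``hard case'' where every $W_j$ has dimension at least~$2$. Your key observation only gives $\operatorname{ord}\psi_j(\tau)\ge\ell-1$, which is not enough: the condition ``$h^k$ is scalar'' on a single summand $W_j$ of dimension $n_j\ge 2$ only yields $\psi_j(\tau)^{k(\ell-1)}=1$, and if the order were exactly $\ell-1$ this would impose no constraint on~$k$. You then try to rescue this via the symplectic eigenvalue pairing, but the argument you sketch is not carried to completion and, as you acknowledge, is delicate. The paper avoids this entirely by sharpening your key observation: for a simple summand of dimension $n\ge 2$ the digits $e_i$ cannot all equal~$1$ (otherwise $\psi=\chi_\ell$ would have image in $\mathbb{F}_\ell^\times$, contradicting Lemma~\ref{surf_lemma_dimJHQuotients}); after a cyclic shift one may take the top digit to be~$0$, so $E\le\sum_{i=0}^{n-2}\ell^i=(\ell^{n-1}-1)/(\ell-1)$ and hence
\[
\operatorname{ord}\psi(\tau)\;\ge\;\frac{\ell^n-1}{E}\;\ge\;\frac{(\ell^n-1)(\ell-1)}{\ell^{n-1}-1}\;\ge\;\ell(\ell-1).
\]
With this improved bound the hard case becomes a one-liner: comparing two Frobenius-conjugate eigenvalues on a single $W_j$ gives $\psi_j(\tau)^{N(\ell-1)}=1$ (where $N=|\mathbb{P}G_\ell|$), hence $N(\ell-1)\ge\ell(\ell-1)$ and $N\ge\ell$. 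No appeal to the symplectic pairing is required beyond the determinant.

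In short: strengthen your key observation from $\ell-1$ to $\ell(\ell-1)$ using the ``not all digits are~$1$'' remark, and your hard case dissolves; the symplectic detour can be dropped.
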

\begin{proof}
We let again $W_1, \ldots, W_k$ be the simple Jordan-H\"older subquotients of $A[\ell]$ under the action of the tame inertia group at $\PlaceOfK$ (denote $\TameInertia$), and we write $\psi_i$ ($i=1,\ldots,k$) for the character giving the action of $\TameInertia$ on $W_i$. Let $N$ be the order of $|\mathbb{P}G_\ell|$, and notice that for every $y \in G_\ell$ the projective image of $y^{N}$ is trivial, that is, $y^{N}$ is a multiple of the identity, and in particular has a unique eigenvalue of multiplicity $2g$. Since for $x \in \TameInertia$ the eigenvalues of $\rho_\ell(x)$ are given by the Galois conjugates of the various $\psi_i(x)$, this implies that for all $i,j =1,\ldots,k$, for all integers $t \geq 0$, and for all $x \in \TameInertia$ we have
\begin{equation}\label{threefolds_eq_ConstantGroupsEigenvalues}
\psi_i(x)^{\ell^t N}=\psi_j(x)^{N}.
\end{equation}
We now distinguish three cases:
\begin{enumerate}
\item At least one of the $W_i$'s is of dimension $\geq 2$: without loss of generality, we can assume that $n:=\dim W_1$ is at least 2. Let $\psi$ be the associated character. By Raynaud's theorem, there are integers $e_0,\ldots,e_{n-1} \in \{0,1\}$ such that $\psi=\varphi^{\sum_{i=0}^{n-1} e_i \ell^i}$, where $\varphi$ is a fundamental character of level $n$. Note that we cannot have $e_i=1$ for all $i=0,\ldots,n-1$, for otherwise we would have $\psi=\chi_\ell$, which contradicts the fact that $W_1$ is of dimension $n > 1$ (lemma \ref{surf_lemma_dimJHQuotients}). In particular, since for all integers $t \geq 0$ the character $\varphi^{\ell^t}$ is a Galois conjugate of $\varphi$, replacing $\varphi$ with $\varphi^{\ell^t}$ for a suitable $t$ we can assume that $e_{n-1}=0$ (notice that replacing $\varphi$ with $\varphi^\ell$ has the effect of permuting cyclically the integers $e_i$, at least one of which is zero). Now $\varphi$ has exact order $\ell^{n}-1$, so $\psi=\varphi^{\sum_{i=0}^{n-1} e_i \ell^i}$ has order at least
\[
\frac{\ell^n-1}{\sum_{i=0}^{n-1} e_i \ell^i} \geq \frac{\ell^n-1}{\sum_{i=0}^{n-2} \ell^i}= \frac{(\ell^n-1)(\ell-1)}{(\ell^{n-1}-1)} \geq \ell(\ell-1),
\]
that is to say, there is an $x \in \TameInertia$ such that $\psi(x)$ has order at least $\ell(\ell-1)$.
Consider now equation \eqref{threefolds_eq_ConstantGroupsEigenvalues} for this specific $x$, for $\psi_i=\psi_j=\psi$ and for $t=1$: it gives $\psi(x)^{(\ell-1) \cdot N}=1$, so $\psi(x)$ has order at most $(\ell-1) \cdot N$. Thus we obtain $(\ell-1) \cdot N \geq \ell(\ell-1)$, that is $N \geq \ell > \ell-1$ as claimed.

\item All the $W_i$'s are of dimension 1, for at least one index $i$ we have $\psi_i=1$, and for at least one index $j$ we have $\psi_j=\chi_\ell$: then for all $x \in \TameInertia$ we have $\psi_j(x)^{N}=\psi_i(x)^{N}$, that is, $\chi_\ell(x)^{N}=1$ for all $x \in \TameInertia$. As $\chi_\ell$ has exact order $\ell-1$, this implies $N \geq \ell-1$. 
\item All the $W_i$'s are of dimension 1 and all characters $\psi_i$ are equal to each other (and in particular are either all trivial or all equal to the cyclotomic character $\chi_\ell$): in this case there are exactly $k=2g$ simple Jordan-H\"older quotients, and from the equality
\[
\chi_\ell(x)^{g} = \det \rho_\ell(x)=\prod_{i=1}^{2g} \psi_i(x) = \begin{cases} 1, \text{ if }\psi_i=1\text{ for every }i \\ \chi_\ell(x)^{2g}, \text{ if }\psi_i=\chi_\ell\text{ for every }i \end{cases}
\]
we find $\chi_\ell(x)^g=1$ for all $x \in \TameInertia$, which contradicts the fact that the order of $\chi_\ell$ is $\ell-1 > g$.
\end{enumerate}
\end{proof}

\subsection{The surjectivity result}

We are almost ready to prove theorem \ref{surf_thm_MainZ}: the last ingredient we are missing are two isogeny estimates which form the subject of lemmas \ref{surf_lemma_Irreducibility} and \ref{surf_lemma_AbsoluteIrreducibility} below. The strategy of proof of these lemmas is a variant of the approach of \cite{MR1209248} -- cf.~especially lemmas 3.1 and 3.2 of \textit{op.~cit.}

\begin{lemma}\label{surf_lemma_Irreducibility}
Let $A/K$ be an abelian variety of dimension $g$ with $\operatorname{End}_{K}(A) = \mathbb{Z}$, and let $\ell$ be a prime strictly larger than $b_0(A/K)$. The $G_\ell$-module $A[\ell]$ is irreducible.
\end{lemma}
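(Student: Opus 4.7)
The plan is to argue by contradiction: suppose $A[\ell]$ admits a proper, non-zero $G_\ell$-stable subgroup $W$. Since $W$ is Galois-stable, the quotient $B := A/W$ is an abelian variety defined over $K$, and the canonical projection $\pi \colon A \to B$ is a $K$-isogeny of degree $|W| = \ell^k$ for some integer $k$ with $1 \le k \le 2g-1$. Because $B$ is $K$-isogenous to $A$, the isogeny theorem (Theorem \ref{surf_thm_Isogeny}, or equivalently the defining property of $b_0(A/K)$) produces a $K$-isogeny $\varphi \colon B \to A$ of degree $d \le b_0(A/K) < \ell$.

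The key move is to consider the composition $\varphi \circ \pi \colon A \to A$. This is a non-zero element of $\operatorname{End}_K(A) = \mathbb{Z}$, hence equals multiplication by some integer $n \neq 0$. I would now extract two pieces of information from this identity. First, degrees multiply: $n^{2g} = \deg([n]) = \deg(\varphi) \cdot \deg(\pi) = d \cdot \ell^k$. Second, since $\ker(\varphi\circ\pi) = A[n]$ contains $\ker(\pi) = W \subseteq A[\ell]$, the group $A[n]$ contains a subgroup of order $\ell$, forcing $\ell \mid n$.

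Writing $n = \ell m$ with $m \in \mathbb{Z}_{>0}$, the degree equation becomes $\ell^{2g} m^{2g} = d \cdot \ell^k$, hence $d = \ell^{2g-k} m^{2g} \ge \ell^{2g-k} \ge \ell$, since $k \le 2g-1$. This contradicts $d < \ell$, and finishes the proof.

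The argument is essentially routine once the isogeny theorem is available: the only step that requires any care is justifying that $\varphi \circ \pi$ lies in $\operatorname{End}_K(A)$ (which uses that both $\pi$ and $\varphi$ are defined over $K$, together with the hypothesis $\operatorname{End}_K(A) = \mathbb{Z}$) and the accounting of prime-to-$\ell$ and $\ell$-parts of the various degrees. I do not anticipate any real obstacle; the proof is a direct divisibility computation that packages the strategy from \cite{MR1209248} into the current setup.
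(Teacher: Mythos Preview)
Your proof is correct and follows essentially the same approach as the paper: both argue by contradiction, form the quotient by a Galois-stable subgroup, invoke the isogeny bound to get a return isogeny, identify the composite with multiplication by an integer, and derive a divisibility contradiction from $\ell \mid n$ together with the degree equation. The only cosmetic difference is that the paper bounds $m^{2g}$ above by $b_0(A/K)\,\ell^{2g-1}$ and compares with $\ell^{2g}$, whereas you solve for $d$ and compare with $\ell$; these are the same computation.
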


\begin{proof}
Suppose by contradiction that $A[\ell]$ is not irreducible and let $H$ be a nontrivial subspace of $A[\ell]$ stable under the action of $G_\ell$. As $H$ is a proper $\mathbb{F}_\ell$-subspace of $A[\ell] \cong \mathbb{F}_\ell^{2g}$, its order divides $\ell^{2g-1}$.
Consider now the abelian variety $A^*=A/H$, which is defined over $K$ (since $H$ is), and let $\Psi:A^* \to A$ be an isogeny of degree at most $b_0(A/K)$. Let $\pi: A \to A^*$ be the canonical projection, of degree $|H| \leq \ell^{2g-1}$, and consider the composition $\Psi \circ \pi:A \to A$. By the hypothesis $\operatorname{End}_K(A)=\mathbb{Z}$ this composition must be multiplication by $m$ for a certain nonzero integer $m$. Comparing degrees we see that $m^{2g} = \deg(\Psi) \deg(\pi) \leq b_0(A/K) \ell^{2g-1}$, and on the other hand $\Psi \circ \pi$ kills $H$ (since this is true even for $\pi$ alone), so $mH=0$. Every nonzero element of $H$ has order $\ell$, so $m$ must be divisible by $\ell$, which implies $\ell^{2g} \leq b_0(A/K) \ell^{2g-1}$, i.e. $\ell \leq b_0(A/K)$, contradiction.
\end{proof}

\begin{lemma}\label{surf_lemma_AbsoluteIrreducibility}
Let $A/K$ be an abelian variety of dimension $g$. Suppose that $\operatorname{End}_{K}(A) = \mathbb{Z}$ and let $\ell$ be a prime strictly larger than $b_0(A^2/K)^{1/2g}$: the centralizer of $G_\ell$ inside $\operatorname{End}(A[\ell])$ is $\mathbb{F}_\ell$.
\end{lemma}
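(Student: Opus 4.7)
The plan is to mimic the strategy of Lemma \ref{surf_lemma_Irreducibility}, but now working with $A^2$ rather than $A$: a non-scalar Galois-equivariant endomorphism of $A[\ell]$ will be promoted, via its graph, to a $K$-rational subgroup of $A^2[\ell]$ yielding a $K$-isogenous abelian variety, and the isogeny theorem applied to $A^2$ will produce the contradiction.

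More precisely, suppose for contradiction that there exists $\phi \in \operatorname{End}_{\mathbb{F}_\ell}(A[\ell])$ commuting with $G_\ell$ and not equal to a scalar. Consider the graph
\[
\Gamma_\phi = \{ (x,\phi(x)) \mid x \in A[\ell] \} \subset A[\ell] \oplus A[\ell] = A^2[\ell].
\]
Since $\phi$ commutes with the Galois action, $\Gamma_\phi$ is $\abGal{K}$-stable, hence defines a finite $K$-rational subgroup scheme of $A^2$ of order $\ell^{2g}$. The quotient $B := A^2/\Gamma_\phi$ is thus a $K$-abelian variety that is $K$-isogenous to $A^2$, so by definition of $b_0$ there is a $K$-isogeny $\Psi : B \to A^2$ of degree at most $b_0(A^2/K)$.

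Now let $\pi:A^2 \to B$ be the canonical projection and consider $M := \Psi\circ \pi \in \operatorname{End}_K(A^2)$. Since $\operatorname{End}_K(A)=\mathbb{Z}$, we have $\operatorname{End}_K(A^2) \cong M_2(\mathbb{Z})$, so $M$ corresponds to a matrix $\begin{pmatrix} a & b \\ c & d\end{pmatrix}$ with integer entries, whose degree as an endomorphism of $A^2$ equals $|ad-bc|^{2g}$. By construction $M$ kills $\Gamma_\phi$, which translates into the identities $ax + b\phi(x) = 0$ and $cx+d\phi(x)=0$ in $A[\ell]$ for every $x \in A[\ell]$: the key step is to deduce from non-scalarity of $\phi$ that $a,b,c,d$ must all be divisible by $\ell$ (indeed, if $b \not\equiv 0 \pmod \ell$ then $\phi \equiv -ab^{-1} \pmod \ell$ would be scalar, and similarly for the other entries). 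Hence $M = \ell M'$ with $M' \in M_2(\mathbb{Z})$, and since $M$ is a composition of isogenies it is itself an isogeny, forcing $|\det M| \geq \ell^2$ and thus $\deg M \geq \ell^{4g}$.

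Comparing with $\deg M = \deg \Psi \cdot \deg \pi \leq b_0(A^2/K) \cdot \ell^{2g}$ yields $\ell^{2g} \leq b_0(A^2/K)$, contradicting the hypothesis $\ell > b_0(A^2/K)^{1/2g}$. The only delicate point is the descent of divisibility in step (5) — making sure that the non-scalarity of $\phi$ is genuinely enough to rule out any one of $a,b,c,d$ being a unit mod $\ell$ — but this is a direct linear-algebra check once the two relations above are in hand; everything else is bookkeeping with degrees of isogenies and the standard identification $\operatorname{End}_K(A^2) = M_2(\mathbb{Z})$.
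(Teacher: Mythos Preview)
Your proof is correct and follows exactly the same strategy as the paper's own argument: form the graph of a non-scalar centralizing endomorphism inside $A^2[\ell]$, quotient, pull back via the isogeny bound, identify the resulting endomorphism of $A^2$ with a matrix in $M_2(\mathbb{Z})$, and force all four entries to be divisible by $\ell$ by the non-scalarity hypothesis. The delicate point you flag (divisibility of $a,b,c,d$) is handled in the paper in the same casual way, and your justification is adequate.
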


\begin{proof}
Suppose by contradiction that the centralizer of $G_\ell$ inside $\operatorname{End}(A[\ell])$ is strictly larger than $\mathbb{F}_\ell$ and choose an $\alpha$ lying in this centralizer but not in $\mathbb{F}_\ell$. Consider the abelian variety $B=A^2$ and the subgroup of $B$ given by $\Gamma=\left\{ (x,\alpha x) \bigm\vert x \in A[\ell]\right\}$. Note that $\Gamma$ is defined over $K$: indeed any $g \in \abGal{K}$ sends $(x,\alpha x)$ to $(\rho_\ell(g) x, \rho_\ell(g) \alpha x) =\left(\rho_\ell(g) x, \alpha (\rho_\ell(g) x) \right) \in \Gamma$ (since $\alpha$ commutes with all of $\rho_\ell(\abGal{K})$).
Let $B^*=B/\Gamma$, $\pi:B \to B^*$ be the natural projection, and $\psi:B^* \to B$ be an isogeny in the opposite direction satisfying $\deg(\psi) \leq b_0(A^2/K)$.

The endomorphism $\psi \circ \pi$ of $B$ kills $\Gamma$, and thanks to the hypothesis $\operatorname{End}_{K}(A)=\mathbb{Z}$ it can be represented by a matrix $\left( \begin{matrix} a & b \\ c & d \end{matrix} \right) \in \operatorname{M}_2(\operatorname{End}_{K}(A))=\operatorname{M}_2(\mathbb{Z})$ with nonzero determinant. By definition we must have $ax+b\alpha x=cx+d\alpha x=0$ for every $x \in A[\ell]$. Suppose that one among $a,b,c,d$ is not divisible by $\ell$ (and for the sake of simplicity let us assume it is $b$, the other cases being analogous): then we have $\alpha(x)=-b^{-1}a x$ for every $x$ in $A[\ell]$, which shows that $\alpha$ coincides with the multiplication by an element of $\mathbb{F}_\ell$, contradiction. Therefore $a,b,c,d$ are all divisible by $\ell$, and the degree of $\psi \circ \pi$, which is $\left(\det\left( \begin{matrix} a & b \\ c & d \end{matrix} \right)\right)^{2g}$, is divisible by $\ell^{4g}$. On the other hand, the degree of $\pi$ is $|\Gamma|=|A[\ell]|=\ell^{2g}$, so we deduce 
\[
\ell^{4g} \leq \operatorname{deg}(\psi \circ \pi) = \deg \psi \cdot \ell^{2g} \leq \ell^{2g} b_0(A^2/K),
\]
which is against the hypothesis.
\end{proof}

\begin{theorem}{(Theorem \ref{surf_thm_MainZ})} 
Let $A/K$ be an abelian surface with $\operatorname{End}_{\overline{K}}(A)=\mathbb{Z}$. The equality $G_{\ell^\infty}=\operatorname{GSp}_4(\mathbb{Z}_\ell)$ holds for every prime $\ell$ that satisfies the following three conditions:
\begin{itemize}
\item $\ell > b(2[K:\mathbb{Q}],4,2h(A))^{1/4}$;
\item there exists a place of $K$ of characteristic $\ell$ at which $A$ has semistable reduction;
\item $\ell$ is unramified in $K$.
\end{itemize}
\end{theorem}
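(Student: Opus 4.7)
The plan is to apply the classification of maximal subgroups of $\operatorname{GSp}_4(\mathbb{F}_\ell)$ furnished by Theorem \ref{surf_thm_Classification}, and to show that under the stated hypotheses none of the five types can contain $G_\ell$. Once $G_\ell \supseteq \operatorname{Sp}_4(\mathbb{F}_\ell)$ has been established, Serre's lifting Lemma \ref{surf_lemma_LiftingEndZ} gives $\operatorname{Sp}_4(\mathbb{Z}_\ell) \subseteq G_{\ell^\infty}$, and Lemma \ref{surf_lemma_Conclusion} together with the hypothesis that $\ell$ is unramified in $K$ upgrades this to the equality $G_{\ell^\infty} = \operatorname{GSp}_4(\mathbb{Z}_\ell)$. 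Types (4) and (5) are already excluded by the preceding propositions: Proposition \ref{surf_prop_NoTwistedCubic} handles type (4), using the semistability hypothesis and that $\ell$ is unramified in $K$, while Proposition \ref{surf_prop_LowerBoundGell} rules out type (5) since the bound on $\ell$ easily forces $\ell - 1 > 3840$.

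It remains to deal with the geometric classes $\mathcal{C}_1, \mathcal{C}_2, \mathcal{C}_3$. Class $\mathcal{C}_1$ produces a proper nontrivial $G_\ell$-stable subspace of $A[\ell]$, directly contradicting Lemma \ref{surf_lemma_Irreducibility}. For class $\mathcal{C}_2$, the pair $\{V_1, V_2\}$ is permuted by $G_\ell$, so an index $\leq 2$ subgroup stabilizes each $V_i$ individually; this subgroup corresponds to an extension $K'/K$ with $[K':K] \leq 2$, and over $K'$ the subspace $V_1$ is Galois-stable, so we reach the same contradiction by applying Lemma \ref{surf_lemma_Irreducibility} to $A/K'$ (which only requires $\ell > b_0(A/K';1) \leq b(2[K:\mathbb{Q}],2,h(A))$). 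For class $\mathcal{C}_3$, an index $\leq 2$ subgroup of $G_\ell$ acts $\mathbb{F}_{\ell^2}$-linearly on $A[\ell]$; passing to the corresponding quadratic extension $K'/K$, the centralizer of $G_\ell$ in $\operatorname{End}_{\mathbb{F}_\ell}(A[\ell])$ contains a copy of $\mathbb{F}_{\ell^2}$, strictly larger than $\mathbb{F}_\ell$. This contradicts Lemma \ref{surf_lemma_AbsoluteIrreducibility} applied to $A/K'$, and by the Isogeny Theorem \ref{surf_thm_Isogeny} the bound it needs is $\ell > b_0(A^2/K')^{1/4} \leq b(2[K:\mathbb{Q}],4,2h(A))^{1/4}$, using $\dim A^2 = 4$ and $h(A^2) = 2h(A)$. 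This last inequality is exactly the bound in the statement, and it subsumes all the others given the rapid growth of $\alpha(g) = 2^{10}g^3$. Note that $\operatorname{End}_{K'}(A) = \mathbb{Z}$ is automatic from the hypothesis $\operatorname{End}_{\overline{K}}(A) = \mathbb{Z}$, so the two lemmas genuinely apply after base change.

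The main conceptual obstacle is in classes $\mathcal{C}_2$ and $\mathcal{C}_3$, where $G_\ell$ need not directly violate reducibility or absolute irreducibility over $K$, but only over a quadratic extension; it is precisely this extension that forces the factor $2[K:\mathbb{Q}]$ (and consequently the doubling of the height and the dimension, via $A^2$) inside the function $b$ in the hypothesis. All remaining work is essentially bookkeeping, as the delicate inputs — Raynaud's theorem on inertia and the isogeny theorem of Masser--W\"ustholz/Gaudron--Rémond — have already been packaged in the earlier propositions and lemmas.
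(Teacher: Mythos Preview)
Your proposal is correct and follows essentially the same route as the paper's proof: reduce to showing $G_\ell \supseteq \operatorname{Sp}_4(\mathbb{F}_\ell)$ via the maximal-subgroup classification, eliminate $\mathcal{C}_1$ by Lemma~\ref{surf_lemma_Irreducibility}, eliminate $\mathcal{C}_2$ and $\mathcal{C}_3$ by passing to a quadratic extension $K'$ and invoking Lemmas~\ref{surf_lemma_Irreducibility} and~\ref{surf_lemma_AbsoluteIrreducibility} respectively, handle types (4) and (5) via Propositions~\ref{surf_prop_NoTwistedCubic} and~\ref{surf_prop_LowerBoundGell}, and finish with Lemmas~\ref{surf_lemma_LiftingEndZ} and~\ref{surf_lemma_Conclusion}. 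The only minor slip is notational: in the $\mathcal{C}_3$ case you write ``the centralizer of $G_\ell$'' when you mean the centralizer of $\rho_\ell\bigl(\abGal{K'}\bigr)$ after base change, but the intent is clear.
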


\begin{proof}
We claim that $G_{\ell}$ contains $\operatorname{Sp}_4(\mathbb{F}_\ell)$. If this is not the case, 
then $G_{\ell}$ is contained in one of the maximal proper subgroups of $\operatorname{GSp}_4(\mathbb{F}_\ell)$ listed in theorem \ref{surf_thm_Classification}. Let us go through this list.

Case (1) is excluded by lemma \ref{surf_lemma_Irreducibility}, since $\ell > b(2[K:\mathbb{Q}],4,2h(A))^{1/4} > b_0(A/K)$.
Suppose next that $G$ is contained in a maximal subgroup of type (2): then there exists a (normal) subgroup $H_\ell$ of $G_\ell$, of index at most 2, that does not act irreducibly on $A[\ell]$. Consider the morphism \[\abGal{K} \to G_\ell \to G_\ell/H_\ell:\] the fixed field of its kernel is a certain extension $K'$ of $K$, of degree at most 2, such that $\rho_\ell\left(\abGal{K'} \right)=H_\ell$ does not act irreducibly on $A[\ell]$. Applying lemma \ref{surf_lemma_Irreducibility} to $A/K'$ we find $\ell \leq b_0(A/K') \leq b(2[K:\mathbb{Q}],4,2h(A))^{1/4}$, contradiction.

Consider now case (3): there is a (normal) subgroup $H_\ell$ of $G_\ell$, of index at most 2, and a structure of $\mathbb{F}_{\ell^2}$-vector space on $A[\ell]$, such that the action of $H_\ell$ on $A[\ell]$ is compatible with the $\mathbb{F}_{\ell^2}$-structure. As above, let $K''$ be the fixed field of $\ker\left(\abGal{K} \to G_\ell \to G_\ell/H_\ell\right)$, and notice that $K''$ is a (at most) quadratic extension of $K$ such that the centralizer of $\rho_\ell(\abGal{K''})$ contains a copy of $\mathbb{F}_{\ell^2}$: this contradicts lemma \ref{surf_lemma_AbsoluteIrreducibility}, since $\ell > b(2[K:\mathbb{Q}],4,2h(A))^{1/4} \geq b_0(A^2/K')^{1/4}$, so we cannot be in case (3) either.
Finally, case (4) is impossible by proposition \ref{surf_prop_NoTwistedCubic}, and case (5) is excluded by proposition \ref{surf_prop_LowerBoundGell}: indeed we clearly have $\ell > 3842$, so proposition \ref{surf_prop_LowerBoundGell} implies that $|\mathbb{P}G_\ell| > 3840$. 
It follows as claimed that $G_\ell$ contains $\operatorname{Sp}_4(\mathbb{F}_\ell)$, hence $G_{\ell^\infty}$ contains $\operatorname{Sp}_4(\mathbb{Z}_\ell)$ by lemma \ref{surf_lemma_LiftingEndZ}; lemma \ref{surf_lemma_Conclusion} finally implies the desired equality $G_{\ell^\infty}=\operatorname{GSp}_4(\mathbb{Z}_\ell)$.
\end{proof}

\section{Type I -- Real multiplication}\label{surf_sect_RM}
We consider now the case of $\operatorname{GL}_2$-varieties, which includes abelian surfaces with real multiplication as a special case. Recall (definition \ref{surf_def_GL2}) that an abelian variety $A$ is said to be of $\operatorname{GL}_2$-type when it is absolutely simple and $R=\operatorname{End}_{\overline{K}}(A)$ is an order in a totally real number field $E$ of degree equal to $\dim A$. We shall assume that the action of $R$ is defined over $K$.
For every $\ell$ we set $\mathcal{O}_\ell=\mathcal{O}_E \otimes_{\mathbb{Z}} \mathbb{Z}_\ell$, and if $\lambda$ is a place of $E$ we let $\mathcal{O}_\lambda$ be the completion of $\mathcal{O}_E$ at $\lambda$. We have $\mathcal{O}_\ell \cong \prod_{\lambda | \ell} \mathcal{O}_\lambda$, where the product is over the places of $E$ dividing $\ell$. An implicit convention will always be in force, that if $\lambda$ is a place of $E$ then $\ell$ denotes its residual characteristic.

\begin{definition}
Following Ribet's paper \cite{MR0457455} we say that a rational prime $\ell$ is \textbf{good} for $A$ if it does not divide the index $[\mathcal{O}_E:R]$.
\end{definition}

As $[\mathcal{O}_E:R]$ is finite, all but finitely many primes are good for $A$. It is a general fact that $[\mathcal{O}_E:R]$ can be bounded in terms of $K$ and $h(A)$, cf.~appendix \ref{surf_sect_Index}. We obtain from proposition \ref{surf_prop_EndomorphismRingIndex} the following estimate, which enables us to assume that all the primes we work with are good:

\begin{proposition}\label{surf_prop_EveryPrimeIsGood}
The index $[\mathcal{O}_E:R]$ is bounded by $b\left ([K:\mathbb{Q}],\dim A,h(A) \right)^{\dim A}$. In particular, any prime strictly larger than this quantity is good.
\end{proposition}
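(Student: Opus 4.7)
The plan is to reduce the statement to Proposition \ref{surf_prop_EndomorphismRingIndex} of the appendix, combined with the isogeny theorem (Theorem \ref{surf_thm_Isogeny}). The second assertion of the proposition is tautological: by the definition just given, a prime $\ell$ is good for $A$ precisely when $\ell \nmid [\mathcal{O}_E:R]$, so any $\ell$ strictly larger than $[\mathcal{O}_E:R]$ is automatically good. It therefore suffices to establish the numerical bound.

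For this first assertion, I expect the appendix to furnish a bound on $[\mathcal{O}_E:R]$ of the shape $[\mathcal{O}_E:R] \leq b_0(A/K)^{\dim A}$ (or, up to a small extension of the base field, $b_0(A/K;d)^{\dim A}$), where $b_0$ is the best isogeny bound introduced in Definition \ref{surf_def_b0Function}. The geometric idea is that if the conductor $f = [\mathcal{O}_E:R]$ is large, then one can produce an auxiliary $K$-abelian variety $A^\ast$, $K$-isogenous to $A$ and equipped with a $K$-action by the full ring of integers $\mathcal{O}_E$, as follows. Since $f\mathcal{O}_E \subseteq R$, every $\alpha \in \mathcal{O}_E$ satisfies $f\alpha \in R$, hence acts as a genuine $K$-endomorphism of $A$. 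One defines $A^\ast$ as the quotient of $A$ by the largest subgroup of $A[f]$ compatible with the would-be $\mathcal{O}_E$-action (morally, $A \otimes_R \mathcal{O}_E$). The minimal degree of the resulting isogeny $A \to A^\ast$ is then bounded below by a power of $f$; the exponent matching $\dim A$ in the final statement reflects the fact that $R$ and $\mathcal{O}_E$ have $\mathbb{Z}$-rank $[E:\mathbb{Q}] = \dim A$, so the contribution of the conductor to the Tate-module comparison scales as $f^{\dim A}$.

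Granted this appendix input, the conclusion is immediate: Theorem \ref{surf_thm_Isogeny} bounds $b_0(A/K)$ by $b([K:\mathbb{Q}],\dim A, h(A))$, and raising to the $\dim A$-th power yields the stated inequality. The main obstacle therefore lies entirely in the appendix, where one must simultaneously (i) construct $A^\ast$ \emph{over $K$} rather than merely over $\overline{K}$, using that $R$ itself acts over $K$ by hypothesis, and (ii) track the precise power of $f$ appearing in the isogeny degree. Once these two points are handled, the present proposition is a one-line consequence of Theorem \ref{surf_thm_Isogeny}.
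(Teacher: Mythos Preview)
Your reduction is correct and matches the paper exactly: Proposition~\ref{surf_prop_EveryPrimeIsGood} is nothing more than Proposition~\ref{surf_prop_EndomorphismRingIndex} specialised to $D=E$ (so that $\dim_{\mathbb{Q}} D=[E:\mathbb{Q}]=\dim A$), and the second assertion is indeed tautological.

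Your speculative sketch of \emph{how} Proposition~\ref{surf_prop_EndomorphismRingIndex} is proved, however, differs from the paper's actual argument. The appendix does begin, as you anticipate, by producing a $K$-isogenous $A'$ with $\operatorname{End}_K(A')=\mathcal{O}_D$ (Lemma~\ref{surf_lemma_CanExtendEndRing}) together with isogenies $\varepsilon_1:A\to A'$ and $\varepsilon_2:A'\to A$, each of degree at most $b(A/K)$. But rather than bounding from below the degree of a single natural isogeny $A\to A\otimes_R\mathcal{O}_E$, the paper considers the \emph{linear} map $\varphi:\mathcal{O}_D\to R$, $e\mapsto\varepsilon_2\circ e\circ\varepsilon_1$, views $R\subset\mathcal{O}_D$ as lattices in $D\otimes\mathbb{R}$ equipped with the degree quadratic form, and deduces $[\mathcal{O}_D:R]\leq\det\varphi\leq b(A/K)^{\dim_{\mathbb{Q}} D}$ via Hadamard's inequality on the rows of $\varphi$. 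In particular the isogeny theorem is invoked \emph{inside} the appendix (to bound $\deg\varepsilon_i$), so the bound comes out directly as $b(A/K)^{\dim A}$ rather than $b_0(A/K)^{\dim A}$ followed by a separate $b_0\leq b$ step. Your tensor-product picture is morally on the right track, but turning ``the isogeny degree is bounded below by a power of $f$'' into the precise exponent $\dim A$ would need more care than you indicate; the covolume/Hadamard route sidesteps this bookkeeping entirely.
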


\smallskip

\noindent From now on we only consider \textit{good} primes that do not ramify in $E$ -- this only excludes finitely many primes.
Notice that in the case of surfaces, in view of the last proposition and of the obvious inequality $b(2[K:\mathbb{Q}],2\dim A,2h(A))^{1/2} > b\left ([K:\mathbb{Q}],\dim A,h(A) \right)^2$, all the primes considered in corollary \ref{surf_cor_MainRM} are good for $A$. For any good prime $\ell$ we have $R_\ell:=R \otimes \mathbb{Z}_\ell \cong \mathcal{O}_E \otimes \mathbb{Z}_\ell$, and furthermore for such primes the structure of $T_\ell(A)$ is well understood:

\begin{proposition}{(\cite[Proposition 2.2.1]{MR0457455})}
If $\ell$ is good for $A$, then $T_\ell(A)$ is a free $R_\ell$-module of rank 2; equivalently, it is a free $\mathcal{O}_\ell$-module of rank 2.
\end{proposition}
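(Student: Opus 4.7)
The two formulations of the conclusion are equivalent: the hypothesis that $\ell$ is good means $\ell \nmid [\mathcal{O}_E : R]$, so tensoring the inclusion $R \subseteq \mathcal{O}_E$ with $\mathbb{Z}_\ell$ gives $R_\ell = \mathcal{O}_\ell$. The plan is therefore to prove directly that $T_\ell(A)$ is free of rank $2$ as an $\mathcal{O}_\ell$-module, first splitting everything according to the primes of $E$ above $\ell$ and then determining the rank at the rational level.

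Since $\ell$ is unramified in $E$, we have $\mathcal{O}_\ell = \prod_{\lambda\mid\ell}\mathcal{O}_\lambda$ with each $\mathcal{O}_\lambda$ a DVR over $\mathbb{Z}_\ell$. The corresponding orthogonal idempotents of $\mathcal{O}_\ell$ decompose $T_\ell(A)$ as $\bigoplus_{\lambda\mid\ell} T_\lambda$, where $T_\lambda$ is a finitely generated $\mathcal{O}_\lambda$-module. Since $T_\ell(A)$ is $\mathbb{Z}_\ell$-free, each $T_\lambda$ is $\mathcal{O}_\lambda$-torsion-free, and hence free over the DVR $\mathcal{O}_\lambda$ of some rank $r_\lambda$. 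The proposition reduces to showing that $r_\lambda = 2$ for every $\lambda\mid\ell$.

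Tensoring with $\mathbb{Q}_\ell$ produces the rational Tate module $V_\ell(A) = \bigoplus_\lambda V_\lambda$, where $V_\lambda := T_\lambda\otimes_{\mathcal{O}_\lambda}E_\lambda$ is an $E_\lambda$-vector space of dimension $r_\lambda$ (with $E_\lambda$ denoting the completion of $E$ at $\lambda$). It therefore suffices to show that $V_\ell(A)$ is free of rank $2$ over $E\otimes_{\mathbb{Q}}\mathbb{Q}_\ell = \prod_\lambda E_\lambda$. For this, fix an embedding $K\hookrightarrow\mathbb{C}$ and invoke the Betti--\'etale comparison isomorphism $V_\ell(A)\cong H_1(A_\mathbb{C},\mathbb{Q})\otimes_\mathbb{Q}\mathbb{Q}_\ell$, which is compatible with the two natural actions of $E$ (both are induced by the embedding $E\hookrightarrow\operatorname{End}(A_\mathbb{C})\otimes\mathbb{Q}$). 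Over the field $E$, the module $H_1(A_\mathbb{C},\mathbb{Q})$ is automatically free, and counting $\mathbb{Q}$-dimensions gives its rank as $2g/[E:\mathbb{Q}] = 2$.

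The only step not of a purely formal nature is the appeal to the Betti--\'etale comparison isomorphism, which transports the self-evident rank-$2$ freeness of $H_1(A_\mathbb{C},\mathbb{Q})$ over the field $E$ into the $\ell$-adic setting. Everything else is a consequence of the product decomposition of $\mathcal{O}_\ell$ and the standard fact that finitely generated torsion-free modules over a DVR are free.
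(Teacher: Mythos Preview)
The paper does not give its own proof of this proposition; it is quoted from Ribet \cite[Proposition 2.2.1]{MR0457455} without argument. Your proof is correct and is essentially the standard one (and, in outline, Ribet's): pass to the rational Tate module using that $\mathcal{O}_\ell$ is a product of DVRs, then invoke the Betti--\'etale comparison to transport the evident freeness of $H_1(A_{\mathbb{C}},\mathbb{Q})$ over the field $E$ to $V_\ell(A)$ over $E\otimes_{\mathbb{Q}}\mathbb{Q}_\ell$.

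One small remark: you justify the splitting $\mathcal{O}_\ell=\prod_{\lambda\mid\ell}\mathcal{O}_\lambda$ with each $\mathcal{O}_\lambda$ a DVR by saying ``since $\ell$ is unramified in $E$''. That hypothesis is a standing convention in the surrounding text, but it is not part of the proposition as stated (nor of Ribet's original), and it is not needed here: for any prime $\ell$ one has $\mathcal{O}_E\otimes_{\mathbb{Z}}\mathbb{Z}_\ell\cong\prod_{\lambda\mid\ell}\mathcal{O}_{E,\lambda}$ by the Chinese remainder theorem for the Dedekind domain $\mathcal{O}_E$, and each completion $\mathcal{O}_{E,\lambda}$ is a DVR regardless of ramification. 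With that correction your argument covers all good primes, as the proposition asserts.
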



When $\ell$ is good and $\lambda$ is a place of $E$ of characteristic $\ell$ we put $T_\lambda(A)=T_\ell(A) \otimes_{\mathcal{O}_\ell} \mathcal{O}_\lambda$: this makes sense since $\mathcal{O}_\ell=R \otimes \mathbb{Z}_\ell$. The Galois action on $T_\ell(A)$ is $\mathcal{O}_\ell$-linear, and we thus obtain canonical decompositions $\displaystyle T_\ell(A) \cong \prod_{\lambda|\ell} T_\lambda(A)$; the $\mathcal{O}_\ell$-linear morphism $\rho_{\ell^\infty}$ then amounts to a family of $\mathcal{O}_\lambda$-linear maps
\[
\rho_{\lambda^\infty}:\abGal{K} \to \operatorname{GL}(T_\lambda(A)) \cong \operatorname{GL}_2 \left(\mathcal{O}_{\lambda}\right).
\]

We also have isomorphisms $\operatorname{Aut} T_\ell(A) \cong \operatorname{GL}_2(\mathcal{O}_E \otimes \mathbb{Z}_\ell) \cong \prod_{\lambda|\ell} \operatorname{GL}_2 \left(\mathcal{O}_{\lambda}\right)$, and we regard the $\ell$-adic Galois representation on $T_\ell(A)$ as a group morphism
\[
\rho_{\ell^\infty}:\abGal{K} \to \prod_{\lambda|\ell} \operatorname{GL}_2 \left(\mathcal{O}_{\lambda}\right)
\]
whose components are the $\rho_{\lambda^\infty}$. It is also natural to consider $\lambda$-adic residual representations:
\begin{definition}\label{surf_def_GLambda}
If $\lambda$ is a place of $E$ above a good prime $\ell$ that is furthermore unramified in $E$, we write $G_\lambda$ for the image of the residual representation modulo $\lambda$, namely the image of the map $\rho_\lambda$ given by the composition
\[
\abGal{K} \xrightarrow{\rho_{\ell^\infty}} \prod_{\lambda|\ell} \operatorname{GL}_2 \left(\mathcal{O}_{\lambda}\right) \to \operatorname{GL}_2 \left(\mathcal{O}_{\lambda}\right) \to \operatorname{GL}_2 \left(\mathcal{O}_\lambda/\lambda\right).
\]
\end{definition}

The determinant of $\rho_{\lambda^\infty}$ is easy to describe:

\begin{lemma}{(\cite[Lemma 4.5.1]{MR0457455})}\label{surf_lemma_DescriptionDeterminant} For every place $\lambda$ of $E$ dividing a good prime $\ell$, the function
\[
\det \rho_{\lambda^\infty}: \abGal{K} \to \mathcal{O}_\lambda^\times
\]
coincides with the $\ell$-adic cyclotomic character $\chi_\ell:\abGal{K} \to \mathbb{Z}_\ell^\times \hookrightarrow \mathcal{O}_\lambda^\times$.
\end{lemma}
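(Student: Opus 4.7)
The plan is to reduce the statement to a purely linear-algebraic computation on a single free rank-$2$ module over $\mathcal{O}_\lambda$, by refining the Weil pairing along the endomorphism structure. First I would exploit the fact that $E$ is \emph{totally real}: the Rosati involution associated to any $K$-polarization restricts to the identity on $R\subset E$, so the Weil pairing
\[
\langle\cdot,\cdot\rangle : T_\ell(A)\times T_\ell(A)\to \mathbb{Z}_\ell(1)
\]
is not only $\mathbb{Z}_\ell$-bilinear and alternating, but also satisfies $\langle rx,y\rangle=\langle x,ry\rangle$ for every $r\in R$. Fixing this $R$-adjunction property, the map $r\mapsto\langle rx,y\rangle$ is a $\mathbb{Z}$-linear functional on $R$ depending bilinearly on $(x,y)$, hence is of the form $r\mapsto\operatorname{Tr}_{E/\mathbb{Q}}(r\,z_{x,y})$ for a unique element $z_{x,y}$ in the inverse different $\mathfrak{d}_{E/\mathbb{Q}}^{-1}\otimes\mathbb{Z}_\ell(1)$. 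This produces a canonical Galois-equivariant refinement
\[
\langle\cdot,\cdot\rangle_E : T_\ell(A)\times T_\ell(A)\to \mathfrak{d}_{E/\mathbb{Q}}^{-1}\otimes_{\mathbb{Z}}\mathbb{Z}_\ell(1)
\]
which is $R$-bilinear, alternating, and recovers the original Weil pairing upon composition with $\operatorname{Tr}_{E/\mathbb{Q}}\otimes\operatorname{id}$.

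Next, since $\ell$ is good and unramified in $E$, the localization at $\lambda$ has two convenient features: $R\otimes\mathbb{Z}_\ell$ equals $\mathcal{O}_\ell=\prod_{\mu\mid\ell}\mathcal{O}_\mu$, and the $\lambda$-component of $\mathfrak{d}_{E/\mathbb{Q}}^{-1}$ is simply $\mathcal{O}_\lambda$. Projecting the refined pairing to its $\lambda$-component therefore yields a Galois-equivariant, $\mathcal{O}_\lambda$-bilinear, alternating pairing
\[
\langle\cdot,\cdot\rangle_\lambda : T_\lambda(A)\times T_\lambda(A)\to \mathcal{O}_\lambda\otimes_{\mathbb{Z}_\ell}\mathbb{Z}_\ell(1).
\]
I would then check non-degeneracy of this $\lambda$-adic pairing: the original Weil pairing is non-degenerate modulo $\ell$ thanks to the standing convention $\ell>b(A/K)$ on the polarization degree, and goodness plus unramifiedness of $\ell$ imply that passing from $\langle\cdot,\cdot\rangle$ to $\langle\cdot,\cdot\rangle_E$ and then to its $\lambda$-component preserves non-degeneracy component-wise.

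Finally, since $T_\lambda(A)$ is free of rank $2$ over $\mathcal{O}_\lambda$, choosing an $\mathcal{O}_\lambda$-basis identifies $\langle\cdot,\cdot\rangle_\lambda$ with the standard alternating form up to a unit scalar, and any non-degenerate alternating $\mathcal{O}_\lambda$-bilinear pairing on a rank-$2$ module forces its multiplier to agree with the determinant: more concretely, for every $g\in\abGal{K}$ and every basis vectors $e_1,e_2$ one has
\[
\chi_\ell(g)\langle e_1,e_2\rangle_\lambda=\langle \rho_{\lambda^\infty}(g)e_1,\rho_{\lambda^\infty}(g)e_2\rangle_\lambda=\det\nolimits_{\mathcal{O}_\lambda}\rho_{\lambda^\infty}(g)\cdot\langle e_1,e_2\rangle_\lambda,
\]
where the outer equalities reflect the Galois-equivariance of $\langle\cdot,\cdot\rangle_\lambda$ (with $\abGal{K}$ acting on the value group $\mathcal{O}_\lambda\otimes\mathbb{Z}_\ell(1)$ via $\chi_\ell$, since $\mathcal{O}_\lambda$ is fixed) and the defining property of the determinant on a rank-$2$ free module. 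Cancelling the non-zero pairing value gives $\det_{\mathcal{O}_\lambda}\rho_{\lambda^\infty}=\chi_\ell$, as desired.

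The only genuinely delicate point is the construction and non-degeneracy of the refined pairing $\langle\cdot,\cdot\rangle_E$: once the $R$-adjunction property is secured through totality of $E$, everything else is formal linear algebra over $\mathcal{O}_\lambda$. Our hypotheses on $\ell$ (good, unramified in $E$, exceeding $b(A/K)$) are precisely what is needed to make the localization at $\lambda$ of this refinement be a non-degenerate pairing on a free rank-$2$ module, after which the determinant statement is immediate.
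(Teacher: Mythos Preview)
The paper does not supply its own proof of this lemma: it simply quotes \cite[Lemma 4.5.1]{MR0457455}. Your argument is correct and is in fact precisely Ribet's proof---refine the Weil pairing to an $E$-valued (more exactly, $\mathfrak{d}_{E/\mathbb{Q}}^{-1}$-valued) alternating form using the trace duality, localize at $\lambda$, and then invoke the elementary identity ``multiplier $=$ determinant'' on a rank-$2$ free module. The key input, that the Rosati involution is trivial on the totally real field $E$ and hence that $\langle rx,y\rangle=\langle x,ry\rangle$, is exactly what Ribet uses.

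One small remark: your appeal to ``$\ell$ unramified in $E$'' to identify the $\lambda$-component of $\mathfrak{d}_{E/\mathbb{Q}}^{-1}\otimes\mathbb{Z}_\ell$ with $\mathcal{O}_\lambda$ is a convenience rather than a necessity. Even when $\ell$ ramifies, the target is still a free rank-$1$ $\mathcal{O}_\lambda$-module, and the cancellation step only needs $\langle e_1,e_2\rangle_\lambda$ to be a nonzero element of such a module (which follows from perfectness of the original Weil pairing). Since the paper has already imposed unramifiedness as a standing hypothesis, this is entirely harmless here.
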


Observe that for a good prime $\ell$ the $\ell$-adic representation lands in $\operatorname{Aut}_{\mathcal{O}_\ell} \left( T_\ell (A) \right)$. If we regard $\mathbb{Z}_\ell$ as being embedded in $\mathcal{O}_\ell$ (the latter is naturally a $\mathbb{Z}_\ell$-algebra), then by the previous lemma we have $\det_{\mathcal{O}_\ell} \rho_{\ell^\infty}(g)=\chi_\ell(g) \in \mathbb{Z}_\ell^\times \subset \mathcal{O}_\ell^\times$, and applying lemma \ref{surf_lemma_Conclusion} we find
\begin{lemma}\label{surf_lemma_detIsSurjective}
If $\ell$ is good and unramified in $K$ then $\det_{\mathcal{O}_\ell} :G_{\ell^\infty} \to \mathbb{Z}_\ell^\times$ is surjective.
\end{lemma}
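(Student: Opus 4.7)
The plan is that this lemma follows almost immediately by combining the two previously established facts: Lemma \ref{surf_lemma_DescriptionDeterminant} identifies the $\mathcal{O}_\ell$-determinant of the Galois representation with the cyclotomic character, and Lemma \ref{surf_lemma_Conclusion} says the cyclotomic character is surjective under the unramifiedness hypothesis.

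More concretely, I would first observe that for a good prime $\ell$ the Tate module $T_\ell(A)$ is a free $\mathcal{O}_\ell$-module of rank $2$, and that under the decomposition $\mathcal{O}_\ell \cong \prod_{\lambda \mid \ell} \mathcal{O}_\lambda$ the determinant $\det_{\mathcal{O}_\ell}$ of a tuple $(x_\lambda)_\lambda \in \prod_\lambda \operatorname{GL}_2(\mathcal{O}_\lambda)$ is simply the tuple $(\det_{\mathcal{O}_\lambda} x_\lambda)_\lambda \in \prod_\lambda \mathcal{O}_\lambda^\times$. Hence applying Lemma \ref{surf_lemma_DescriptionDeterminant} componentwise shows that the composition
\[
\abGal{K} \xrightarrow{\rho_{\ell^\infty}} \operatorname{GL}_2(\mathcal{O}_\ell) \xrightarrow{\det_{\mathcal{O}_\ell}} \mathcal{O}_\ell^\times
\]
factors through the diagonal copy of $\mathbb{Z}_\ell^\times \hookrightarrow \mathcal{O}_\ell^\times$, and in fact equals the cyclotomic character $\chi_\ell : \abGal{K} \to \mathbb{Z}_\ell^\times$.

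From this, the image $\det_{\mathcal{O}_\ell}(G_{\ell^\infty}) = \det_{\mathcal{O}_\ell}(\rho_{\ell^\infty}(\abGal{K}))$ coincides with the image of $\chi_\ell$. Since $\ell$ is assumed to be unramified in $K$, Lemma \ref{surf_lemma_Conclusion} guarantees that $\chi_\ell$ is surjective onto $\mathbb{Z}_\ell^\times$, so the restriction of $\det_{\mathcal{O}_\ell}$ to $G_{\ell^\infty}$ is surjective onto $\mathbb{Z}_\ell^\times$. There is no significant obstacle here — the proof is really a one-line combination of the two preceding lemmas, and the only thing worth stating carefully is the compatibility of the componentwise determinants with the global one on $\mathcal{O}_\ell$.
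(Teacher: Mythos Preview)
Your proof is correct and matches the paper's argument exactly: the paper also observes (in the sentence immediately preceding the lemma) that by Lemma~\ref{surf_lemma_DescriptionDeterminant} the $\mathcal{O}_\ell$-determinant of $\rho_{\ell^\infty}$ coincides with $\chi_\ell$, and then invokes Lemma~\ref{surf_lemma_Conclusion} to conclude surjectivity. Your additional remark on the componentwise compatibility of the determinant is a harmless elaboration of the same idea.
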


\subsection{The intersection $G_{\ell^\infty} \cap \operatorname{SL}_2(\mathcal{O}_\ell)$}
The key step in proving the surjectivity of the Galois representation for $\ell$ large enough lies in understanding the intersection $G_{\ell^\infty} \cap \operatorname{SL}_2(\mathcal{O}_\ell)$. A remarkable simplification of the problem comes from the fact that we only need to prove surjectivity for the residual mod-$\ell$ representation instead of the full $\ell$-adic system: this is made possible by the following `lifting' result, analogous to lemma \ref{surf_lemma_LiftingEndZ}.

\begin{proposition}{(\cite[Proposition 4.2]{MR1610883})}\label{surf_prop_LiftingLemma} Let $\mathcal{O}$ be the ring of integers of a number field $E$, $\lambda_1,\lambda_2, \ldots, \lambda_r$ distinct primes of $\mathcal{O}$ above $\ell$ and $H$ a closed subgroup of $\operatorname{SL}_2(\mathcal{O}_{\lambda_1}) \times \cdots \times \operatorname{SL}_2(\mathcal{O}_{\lambda_r})$ whose projection to $\operatorname{SL}_2(\mathbb{F}_{\lambda_1}) \times \cdots \times \operatorname{SL}_2(\mathbb{F}_{\lambda_r})$ is surjective. If $\ell$ is unramified in $E$ and $\ell \geq 5$, then $H$ is all of $\operatorname{SL}_2(\mathcal{O}_{\lambda_1}) \times \cdots \times \operatorname{SL}_2(\mathcal{O}_{\lambda_r})$. Under the same assumptions on $\ell$, if $G$ is a closed subgroup of $\operatorname{GL}_2(\mathcal{O}_{\lambda_1}) \times \cdots \times \operatorname{GL}_2(\mathcal{O}_{\lambda_r})$ whose projection to $\operatorname{GL}_2(\mathbb{F}_{\lambda_1}) \times \cdots \times \operatorname{GL}_2(\mathbb{F}_{\lambda_r})$ contains $\operatorname{SL}_2(\mathbb{F}_{\lambda_1}) \times \cdots \times \operatorname{SL}_2(\mathbb{F}_{\lambda_r})$, then $G'=\operatorname{SL}_2(\mathcal{O}_{\lambda_1}) \times \cdots \times \operatorname{SL}_2(\mathcal{O}_{\lambda_r})$.
\end{proposition}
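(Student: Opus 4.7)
The plan is to prove the statement by working one prime at a time through the principal congruence filtration, reducing everything to a representation-theoretic statement about the adjoint representation of $\operatorname{SL}_2$. Since $H$ is closed in the profinite topology, it suffices to show that for every $n \geq 1$ the image $H_n$ of $H$ in $\prod_i \operatorname{SL}_2(\mathcal{O}_{\lambda_i}/\lambda_i^n)$ equals the full product; indeed $H = \varprojlim_n H_n$, and surjectivity at every finite level would then give $H = \prod_i \operatorname{SL}_2(\mathcal{O}_{\lambda_i})$. The proof proceeds by induction on $n$, with the base case $n=1$ being the hypothesis.

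For the inductive step, I would consider the filtration by the principal congruence subgroups $K_n := \prod_i \ker\left(\operatorname{SL}_2(\mathcal{O}_{\lambda_i}) \to \operatorname{SL}_2(\mathcal{O}_{\lambda_i}/\lambda_i^n)\right)$. Since $\ell$ is unramified in $E$, each $\lambda_i$ is a uniformizer at $\ell$, and the map $I + \lambda_i^n X \mapsto X \bmod \lambda_i$ yields an isomorphism $K_n/K_{n+1} \cong \prod_i \mathfrak{sl}_2(\mathbb{F}_{\lambda_i})$ of $\mathbb{F}_\ell$-vector spaces. Conjugation by $H_n$ induces the adjoint action of $\prod_i \operatorname{SL}_2(\mathbb{F}_{\lambda_i})$ on this quotient. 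Granting the inductive hypothesis $H_n = \prod_i \operatorname{SL}_2(\mathcal{O}_{\lambda_i}/\lambda_i^n)$, the image of $H \cap K_n$ in $\prod_i \mathfrak{sl}_2(\mathbb{F}_{\lambda_i})$ is an $\mathbb{F}_\ell$-submodule stable under this adjoint action.

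The representation-theoretic input is then twofold: first, for $\ell \geq 5$, the adjoint representation of $\operatorname{SL}_2(\mathbb{F}_{\lambda_i})$ on $\mathfrak{sl}_2(\mathbb{F}_{\lambda_i})$ is absolutely irreducible over $\mathbb{F}_{\lambda_i}$, and since the restriction to $\mathbb{F}_\ell$ forms a single Galois orbit of simple modules over the algebraic closure, it is irreducible over $\mathbb{F}_\ell$ as well; second, the simple modules $\mathfrak{sl}_2(\mathbb{F}_{\lambda_i})$ for different $i$ are pairwise non-isomorphic as $\mathbb{F}_\ell$-representations of $\prod_j \operatorname{SL}_2(\mathbb{F}_{\lambda_j})$, because the $j$-th factor acts trivially on the $i$-th summand precisely when $j \neq i$. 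By semisimplicity and Schur, every invariant submodule of $\prod_i \mathfrak{sl}_2(\mathbb{F}_{\lambda_i})$ is a direct product $\prod_i V_i$ with each $V_i \in \{0, \mathfrak{sl}_2(\mathbb{F}_{\lambda_i})\}$. To conclude, one establishes first the single-factor version of the lemma by the same induction, and applies it to the projection $H \to \operatorname{SL}_2(\mathcal{O}_{\lambda_i})$ to rule out $V_i = 0$ for each $i$.

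The main obstacle is precisely this last point, namely excluding $V_i = 0$, which is equivalent to ruling out that the congruence extension $1 \to \mathfrak{sl}_2(\mathbb{F}_\lambda) \to \operatorname{SL}_2(\mathcal{O}_\lambda/\lambda^{n+1}) \to \operatorname{SL}_2(\mathcal{O}_\lambda/\lambda^n) \to 1$ admits a section through the projection of $H$. For $n=1$ this can be handled concretely by an order-counting obstruction: any naive unipotent lift of $\begin{pmatrix} 1 & 1 \\ 0 & 1\end{pmatrix}$ has order $\ell^2$ rather than $\ell$ because $\ell$ is a uniformizer in $\mathcal{O}_\lambda$, and a careful cocycle computation shows no splitting exists for $\ell \geq 5$; for higher $n$ the analogous non-splitting follows by iterating this. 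Finally, the $\operatorname{GL}_2$ version is deduced by applying the $\operatorname{SL}_2$ statement to the derived subgroup $G'$: since $\operatorname{SL}_2(\mathbb{F}_q)$ is perfect for $q \geq 4$, the image of $G'$ modulo the $\lambda_i$'s contains the full product $\prod_i \operatorname{SL}_2(\mathbb{F}_{\lambda_i})$, giving $G' = \prod_i \operatorname{SL}_2(\mathcal{O}_{\lambda_i})$ as desired.
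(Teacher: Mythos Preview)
The paper does not actually prove this proposition: it is quoted verbatim from \cite[Proposition 4.2]{MR1610883} and stated without proof. The second sentence (passing from $G$ to $G'$) is the paper's own addendum, but no argument is given for it either; it is the exact analogue of the one-line deduction in Lemma~\ref{surf_lemma_LiftingEndZ}.

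Your sketch is the standard Serre--Ribet argument and is essentially correct. A couple of small points are worth tightening. First, the phrase ``each $\lambda_i$ is a uniformizer at $\ell$'' is backwards: what you use is that $\ell$ is a uniformizer of $\mathcal{O}_{\lambda_i}$, which is precisely the content of ``$\ell$ unramified in $E$''. Second, for $n\geq 2$ the congruence extensions $1\to\mathfrak{sl}_2(\mathbb{F}_\lambda)\to\operatorname{SL}_2(\mathcal{O}_\lambda/\lambda^{n+1})\to\operatorname{SL}_2(\mathcal{O}_\lambda/\lambda^{n})\to 1$ are in fact split, so the ``iterated non-splitting'' you allude to does not literally work for higher $n$; the cleaner way to finish the single-factor case is to observe that once $H$ surjects onto $\operatorname{SL}_2(\mathcal{O}_\lambda/\lambda^2)$, the pro-$\ell$ group $H\cap K_1$ surjects onto the Frattini quotient $K_1/K_2$ of $K_1$, hence equals $K_1$. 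Your order argument with the unipotent element does handle the crucial case $n=1$. Finally, for the $\operatorname{GL}_2$ statement you should note that $G'$ automatically lands in $\prod_i\operatorname{SL}_2(\mathcal{O}_{\lambda_i})$ (determinant is abelian-valued) and that the reduction of $G'$ contains $\bigl(\overline{G}\bigr)'\supseteq\prod_i\operatorname{SL}_2(\mathbb{F}_{\lambda_i})$ by perfectness; this is exactly what you wrote, and it matches the deduction the paper makes in the $\operatorname{Sp}_{2n}$ setting.
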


\subsubsection{A little group theory}
We briefly review the group-theoretic results we are going to use, starting with the following sufficient criterion for a group to be a direct product:

\begin{lemma}{(\cite[Lemma 5.2.2]{MR0457455})}\label{surf_Ribet_ProductsOfTwo}
Let $S_1, \ldots, S_k$ ($k>1$) be finite groups with no nontrivial abelian quotients. Let $G$ be a subgroup of $S_1 \times \cdots \times S_k$ such that each projection $G \to S_i \times S_j$ ($1 \leq i < j \leq k$) is surjective. Then $G=S_1 \times \cdots \times S_k$.
\end{lemma}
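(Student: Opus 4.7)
The plan is to proceed by induction on $k$. The base case $k=2$ is exactly the hypothesis, so assume the statement holds for $k-1$ factors and consider the setup with $k \geq 3$. Let $\pi : G \to S_1 \times \cdots \times S_{k-1}$ be the projection onto the first $k-1$ coordinates. For each pair $1 \leq i < j \leq k-1$ the projection $G \to S_i \times S_j$ factors through $\pi$, so $\pi(G)$ also satisfies the pairwise-surjectivity assumption; the inductive hypothesis applied to $\pi(G) \subseteq S_1 \times \cdots \times S_{k-1}$ then gives $\pi(G) = S_1 \times \cdots \times S_{k-1}$, that is, $\pi$ is surjective.

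Let $N = \ker \pi$, which is contained in $\{e\} \times \cdots \times \{e\} \times S_k$; identify $N$ with its image $H$ under the $k$-th coordinate projection $p_k : G \to S_k$. A direct conjugation computation, using surjectivity of $p_k$ to produce preimages of arbitrary elements of $S_k$, shows that $H$ is normal in $S_k$. Since $p_k(N) = H$, the map $p_k$ composed with reduction modulo $H$ descends to the quotient $G/N \cong S_1 \times \cdots \times S_{k-1}$ and yields a surjective homomorphism
\[
\phi : S_1 \times \cdots \times S_{k-1} \twoheadrightarrow S_k/H.
\]
The plan is now to show that $\phi$ is the trivial map, which will force $H = S_k$ and hence $N = \{e\} \times \cdots \times \{e\} \times S_k \subseteq G$, so that combined with surjectivity of $\pi$ we obtain $G = S_1 \times \cdots \times S_k$.

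For the triviality of $\phi$, fix $i \in \{1,\ldots,k-1\}$ and let $\phi_i$ be the restriction of $\phi$ to the $i$-th factor. Because distinct factors in $S_1 \times \cdots \times S_{k-1}$ commute, the subgroup $\phi_i(S_i)$ centralizes $\phi_j(S_j)$ for every $j \neq i$. On the other hand, the hypothesis of surjectivity of $G \to S_i \times S_k$ lets one realise any $y \in S_k$ as the $k$-th coordinate of an element $g \in G$ whose $i$-th coordinate is trivial; applying $\phi$ to $\pi(g)$ yields $yH$, so the restriction of $\phi$ to the subproduct $\prod_{j \neq i} S_j$ is already surjective onto $S_k/H$. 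Thus $\phi_i(S_i)$ centralizes all of $S_k/H$, i.e.\ it lies in $Z(S_k/H)$. Being a quotient of $S_i$, the group $\phi_i(S_i)$ inherits the property of having no nontrivial abelian quotient, so $\phi_i(S_i)$ is perfect; a perfect subgroup of an abelian group (the centre) is trivial. Hence each $\phi_i$ is trivial, $\phi$ itself is trivial, and the argument concludes as indicated above.

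The main obstacle is the step that shows each $\phi_i(S_i)$ is central in $S_k/H$: it is here that one must combine the pairwise-commutation of factors in the product with the pairwise-surjectivity hypothesis, and it is the only point where the perfectness assumption on the $S_i$ is genuinely used. Everything else is either straightforward induction, a Goursat-style kernel analysis, or a formal identification.
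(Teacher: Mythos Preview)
The paper does not supply its own proof of this lemma: it is simply quoted from Ribet \cite[Lemma 5.2.2]{MR0457455} and used as a black box. Your inductive argument is correct and is essentially the standard proof; the key step---showing that each $\phi_i(S_i)$ lands in the centre of $S_k/H$ by exhibiting, via the surjectivity of $G \to S_i \times S_k$, enough elements with trivial $i$-th coordinate to cover $S_k/H$---is handled cleanly, and the conclusion (perfect subgroup of an abelian group is trivial) is exactly where the ``no nontrivial abelian quotients'' hypothesis enters.
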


We will also need the following version of \cite[Lemma 5.1]{MR1209248}; note that our formulation is slightly different than that of \cite{MR1209248}, but the same proof applies equally well.

\begin{lemma}{(\cite[Lemma 5.1]{MR1209248})}\label{surf_lemma_51}
Let $\ell \geq 5$ be a prime, $\mathbb{F}$ a finite field of characteristic $\ell$, and
\[
D=\left\{ (b,b') \in \operatorname{GL}_2(\mathbb{F}) \times \operatorname{GL}_2(\mathbb{F}) \bigm\vert \det(b)=\det(b') \in \mathbb{F}_\ell^\times \right\}.
\]

Let $H$ be a subgroup of $D$ whose projections on the two factors $\operatorname{GL}_2(\mathbb{F})$ contain $\operatorname{SL}_2(\mathbb{F})$. Then either $H$ contains $\operatorname{SL}_2(\mathbb{F}) \times \operatorname{SL}_2(\mathbb{F})$, or else there exist an isomorphism $f:\mathbb{F}^2 \to \mathbb{F}^2$ (of $\mathbb{F}$-vector spaces), a character $\chi:H \to \left\{ \pm 1 \right\}$, and a field automorphism $\sigma$ of $\mathbb{F}$ such that
\[
H \subseteq \left\{ (b,b') \in \operatorname{GL}_2(\mathbb{F}) \times \operatorname{GL}_2(\mathbb{F}) \bigm\vert b'=\chi((b,b')) \; \sigma \left(f b f^{-1}\right) \right\}.
\]
\end{lemma}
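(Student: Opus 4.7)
The plan is to combine Goursat's lemma for subgroups of $\operatorname{SL}_2(\mathbb{F}) \times \operatorname{SL}_2(\mathbb{F})$ with the classical description of the automorphisms of $\operatorname{SL}_2(\mathbb{F})$. First I would introduce $K := H \cap (\operatorname{SL}_2(\mathbb{F}) \times \operatorname{SL}_2(\mathbb{F}))$. Because every pair in $H$ has equal determinants on its two entries, any $s \in \operatorname{SL}_2(\mathbb{F})$ appearing in the first projection of $H$ has a lift $(s,s') \in H$ with $\det s' = \det s = 1$, so $K$ still surjects onto $\operatorname{SL}_2(\mathbb{F})$ on each factor.

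Next I would apply Goursat's lemma to $K$ inside $\operatorname{SL}_2(\mathbb{F}) \times \operatorname{SL}_2(\mathbb{F})$. Since $\ell \geq 5$ implies that $\operatorname{PSL}_2(\mathbb{F})$ is simple, the only normal subgroups of $\operatorname{SL}_2(\mathbb{F})$ are $\{1\}$, $\{\pm 1\}$, and $\operatorname{SL}_2(\mathbb{F})$ itself. If $K$ equals the full product we are done. Otherwise, Goursat exhibits $K$ as the preimage of the graph of an isomorphism $\operatorname{SL}_2(\mathbb{F})/N_1 \xrightarrow{\sim} \operatorname{SL}_2(\mathbb{F})/N_2$ with $N_1, N_2 \subseteq \{\pm 1\}$. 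The classical description of the automorphisms of $\operatorname{(P)SL}_2(\mathbb{F})$ as the semidirect product of inner automorphisms with the Galois group $\operatorname{Gal}(\mathbb{F}/\mathbb{F}_\ell)$ shows that any such isomorphism is induced by a map of the form $\phi(s) := \sigma(fsf^{-1})$ for some $f \in \operatorname{GL}_2(\mathbb{F})$ and some field automorphism $\sigma$ of $\mathbb{F}$; thus every $(s,s') \in K$ satisfies $s' = \pm \phi(s)$.

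The heart of the argument is extending this description to all of $H$. Given $(b,b') \in H$ set $c := b' \phi(b)^{-1} \in \operatorname{GL}_2(\mathbb{F})$; the target is to prove that $c$ is a scalar with $c^2 = 1$. For an arbitrary $(s, s') \in K$ the pair $(bsb^{-1}, b's'b'^{-1})$ lies in $H$ and its two components have determinant $1$, so it lies in $K$; this gives $b's'b'^{-1} = \pm \phi(b)\phi(s)\phi(b)^{-1}$, which combined with $s' = \pm \phi(s)$ produces, for $u := \phi(b)\phi(s)\phi(b)^{-1}$, an identity $c u c^{-1} = \epsilon(u) \, u$ with $\epsilon(u) \in \{\pm 1\}$. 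As $s$ varies, $u$ ranges over all of $\operatorname{SL}_2(\mathbb{F})$, and a short computation shows that $u \mapsto \epsilon(u)$ is a group homomorphism $\operatorname{SL}_2(\mathbb{F}) \to \{\pm 1\}$. Since $\operatorname{SL}_2(\mathbb{F})$ is perfect for $|\mathbb{F}| \geq 5$ this homomorphism is trivial, so $c$ centralizes $\operatorname{SL}_2(\mathbb{F})$ in $\operatorname{GL}_2(\mathbb{F})$, forcing $c = \lambda \cdot \mathrm{id}$ for some $\lambda \in \mathbb{F}^\times$.

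Finally, the constraint $\det b = \det b' \in \mathbb{F}_\ell^\times$ together with $b' = \lambda \phi(b)$ and the fact that $\mathbb{F}_\ell$ is pointwise fixed by $\sigma$ yields $\lambda^2 \det b = \det b$, hence $\lambda \in \{\pm 1\}$. Setting $\chi(b,b') := \lambda$, the centrality of $\{\pm 1\}$ in $\operatorname{GL}_2(\mathbb{F})$ makes it routine to check that $\chi : H \to \{\pm 1\}$ is a homomorphism, which is the character required in the statement. I expect the main obstacle to be the extension step from $K$ to $H$: a priori $c = b'\phi(b)^{-1}$ for $(b,b') \in H \setminus K$ could be any element of $\operatorname{GL}_2(\mathbb{F})$, and ruling out non-scalar values is precisely what requires both the perfectness of $\operatorname{SL}_2(\mathbb{F})$ (where the hypothesis $\ell \geq 5$ enters substantively) and the determinant equality built into $D$.
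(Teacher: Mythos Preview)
Your argument is correct. The paper does not give its own proof of this lemma: it simply quotes it from \cite{MR1209248}, noting that ``the same proof applies equally well'' to the slightly reformulated statement used here. Your approach---reducing to $K = H \cap (\operatorname{SL}_2(\mathbb{F})\times\operatorname{SL}_2(\mathbb{F}))$, applying Goursat together with the description $\operatorname{Aut}(\operatorname{PSL}_2(\mathbb{F})) = \operatorname{P\Gamma L}_2(\mathbb{F})$, and then showing that $c = b'\phi(b)^{-1}$ is scalar via perfectness of $\operatorname{SL}_2(\mathbb{F})$---is exactly the standard line of reasoning behind such results, and your handling of the extension step from $K$ to $H$ (including the verification that $\epsilon$ is a well-defined homomorphism and that $\lambda^2 = 1$ follows from $\det b = \det b' \in \mathbb{F}_\ell^\times$) is clean. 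One small terminological point: the automorphisms you use are not all \emph{inner} in $\operatorname{PSL}_2(\mathbb{F})$, since conjugation by elements of $\operatorname{PGL}_2(\mathbb{F})\setminus\operatorname{PSL}_2(\mathbb{F})$ gives diagonal outer automorphisms; but your formula $\phi(s)=\sigma(fsf^{-1})$ with $f\in\operatorname{GL}_2(\mathbb{F})$ already accounts for this, so the mathematics is unaffected.
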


Finally, we will need a description of the subgroups of $\operatorname{GL}_2(\mathbb{F}_{\ell^\beta})$ for $\beta \geq 1$:

\begin{theorem}{(Dickson, \cite[Theorem 3.4]{MR0214671})}\label{surf_thm_Dickson}
Let $p$ be a prime number, $\beta$ be a positive integer, $q=p^\beta$ be a power of $p$, and $G$ be a subgroup of $\operatorname{GL}_2(\mathbb{F}_{q})$. Then, up to conjugacy in $\operatorname{GL}_2(\mathbb{F}_q)$, one of the following occurs:
\begin{enumerate}
\item $G$ is cyclic;
\item $G$ is a subgroup of the Borel group $\left\{\left(\begin{matrix} x & y \\ 0 & z  \end{matrix}  \right) \bigm\vert x,z \in \mathbb{F}_q^\times, y \in \mathbb{F}_q \right\}$;
\item $G$ contains (as a subgroup of index 2) a cyclic subgroup of order $u$, where $u$ divides $q^2-1$;
\item $G$ contains (as a subgroup of index 2) a subgroup consisting entirely of diagonal matrices;
\item[5a.] there is an $\alpha \in \mathbb{N}_{>0}$ dividing $\beta$ such that $G$ is generated by $\operatorname{SL}_2(\mathbb{F}_{p^\alpha})$ and by a scalar matrix $V$ (in this case $p^\alpha > 3$);
\item[5b.] there exist an $\alpha$ dividing $\beta$, a generator $\varepsilon$ of $\mathbb{F}_{p^\alpha}^\times$ (as a multiplicative group), and an element $b \in \mathbb{F}_{p^\beta}^\times$, such that $G$ is generated by $\operatorname{SL}_2(\mathbb{F}_{p^\alpha})$, a scalar matrix $V$, and the diagonal matrix $\operatorname{diag} \left(b, b \varepsilon \right)$; the subgroup generated by $\operatorname{SL}_2(\mathbb{F}_{p^\alpha})$ and $V$ is of type 5a, and has index 2 in $G$ (and again the inequality $p^\alpha>3$ holds);
\item[6.] $G/\left\{ \pm \operatorname{Id} \right\}$ is isomorphic to $S_4 \times \frac{\mathbb{Z}}{u \mathbb{Z}}$, $A_4 \times \frac{\mathbb{Z}}{u \mathbb{Z}}$ or $A_5 \times \frac{\mathbb{Z}}{u \mathbb{Z}}$, where $\frac{\mathbb{Z}}{u \mathbb{Z}}$ is identified with the subgroup generated by a scalar matrix in $\operatorname{GL}_2(\mathbb{F}_q)/\left\{ \pm \operatorname{Id} \right\}$.
\item[7.] $G$ is not of type (6), but $G/\left\{ \pm \operatorname{Id} \right\}$ contains $A_4 \times \frac{\mathbb{Z}}{u \mathbb{Z}}$ as a subgroup of index 2, and $A_4$ as a subgroup with cyclic quotient group; $\frac{\mathbb{Z}}{u \mathbb{Z}}$ is as in type (6) with $u$ even.
\end{enumerate}
\end{theorem}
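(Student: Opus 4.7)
The plan is to reduce the classification to that of finite subgroups of $\operatorname{PGL}_2(\overline{\mathbb{F}_p})$ and then branch on whether the characteristic $p$ divides $|G|$ or not. Concretely, I would first quotient by scalars to obtain $\bar G \subseteq \operatorname{PGL}_2(\mathbb{F}_q)$; since the recipe for reconstructing $G$ from $\bar G$ and $G \cap Z$ (with $Z$ the centre of $\operatorname{GL}_2(\mathbb{F}_q)$) is straightforward, and since this is essentially how cases (6) and (7) are phrased, the bulk of the work is to determine the possibilities for $\bar G$.

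In the semisimple case $p \nmid |G|$, every element of $G$ is diagonalisable over $\overline{\mathbb{F}_p}$, and $\bar G$ acts on $\mathbb{P}^1(\overline{\mathbb{F}_p})$ with every point stabiliser cyclic of order coprime to $p$. I would then invoke the classical orbit-counting argument due to Klein, applied to the orbits of $\bar G$ on the set of points of $\mathbb{P}^1(\overline{\mathbb{F}_p})$ with nontrivial stabiliser: this produces the Diophantine equation
\[
\sum_{i=1}^{r} \left( 1 - \frac{1}{n_i} \right) = 2 - \frac{2}{|\bar G|},
\]
whose finitely many solutions correspond to $\bar G$ being trivial, cyclic (cases 1 and 2, with one or two fixed points on $\mathbb{P}^1$), dihedral (cases 3 and 4), or isomorphic to $A_4$, $S_4$, or $A_5$ (cases 6 and 7). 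The descent from $\overline{\mathbb{F}_p}$-rational structure back to $\mathbb{F}_q$-rational structure is what produces the index-$2$ cyclic extensions visible in (3), (4), and (7): the Frobenius of $\mathbb{F}_{q^2}/\mathbb{F}_q$ may swap a conjugate pair of fixed points on $\mathbb{P}^1$, enlarging a split torus to the normalizer of a non-split torus, and similarly for the exceptional orbits.

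In the unipotent case $p \mid |G|$, I would pick a non-trivial $p$-Sylow $U$ of $G$; since every non-identity element of $U$ is unipotent it fixes a unique line $L \subset \mathbb{F}_q^2$, determining a unique Borel $B$ containing $U$. If $G \subseteq B$ then we land in case (2). Otherwise $G$ contains two $p$-Sylows $U, U'$ fixing distinct lines, and after a change of basis placing these lines along the coordinate axes, two non-commuting unipotents of the form $I + a E_{12}$ and $I + b E_{21}$ generate $\operatorname{SL}_2$ of the subfield of $\mathbb{F}_q$ swept out by the entries of the unipotent elements of $G$ (after suitable normalisation by semisimple conjugators internal to $G$, e.g.\ commutators of opposite unipotents). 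This yields an inclusion $\operatorname{SL}_2(\mathbb{F}_{p^\alpha}) \subseteq G$ for some $\alpha \mid \beta$ with $p^\alpha > 3$ (the small cases $\operatorname{SL}_2(\mathbb{F}_2)$ and $\operatorname{SL}_2(\mathbb{F}_3)$ being themselves soluble and already accounted for by the other cases), and an analysis of the quotient $G \big/ \bigl( \operatorname{SL}_2(\mathbb{F}_{p^\alpha}) \cdot (G \cap Z) \bigr)$ via its action on pairs of opposite Borels shows that this quotient has order at most $2$, yielding (5a) or (5b) according to whether this index is $1$ or $2$.

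The main obstacle is the subfield generation step in the unipotent case: showing that the subgroup produced by all unipotent elements of $G$ is actually $\operatorname{SL}_2$ of a \emph{subfield} of $\mathbb{F}_q$, rather than some a priori exotic subgroup, requires repeatedly conjugating unipotents by semisimple elements available in $G$ and tracking which subring of $\mathbb{F}_q$ their entries sweep out. This is the technical heart of Dickson's original argument, and all of the precise conditions ($p^\alpha > 3$, $\alpha \mid \beta$, existence of the generator $\varepsilon$ in (5b)) flow from its careful execution; by contrast, the semisimple case, while combinatorially intricate, is essentially bookkeeping once the Diophantine equation above has been solved.
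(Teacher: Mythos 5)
The paper does not prove this statement; it is quoted as Dickson's theorem directly from the cited reference, so there is no in-paper argument to compare against. Your sketch should therefore be measured against the classical proof, and it tracks it accurately. The dichotomy on whether $p \mid |G|$ is exactly the right organising principle. In the $p'$-case, passing to $\bar G \subseteq \operatorname{PGL}_2(\overline{\mathbb{F}_p})$ and applying the Klein/Burnside ramification count
\[
\sum_{i=1}^{r}\left(1-\frac{1}{n_i}\right)=2-\frac{2}{|\bar G|}
\]
is legitimate in positive characteristic precisely because every non-identity element of $\bar G$ is then semisimple with two distinct eigenlines, hence fixes exactly two points of $\mathbb{P}^1(\overline{\mathbb{F}_p})$, and all point stabilisers are cyclic; this yields the cyclic, dihedral, $A_4$, $S_4$, $A_5$ trichotomy. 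Your explanation of the index-$2$ phenomena in (3), (4), (7) as a Galois-descent effect (Frobenius of $\mathbb{F}_{q^2}/\mathbb{F}_q$ swapping a conjugate pair of eigenlines, turning a split torus into the normaliser of a non-split one) is also the right picture. In the $p$-singular case the Borel/opposite-unipotent dichotomy and the ``subfield swept out by unipotent entries'' step are indeed where the real work lies, and the $p^\alpha>3$ clause is just the exclusion of the soluble groups $\operatorname{SL}_2(\mathbb{F}_2)$ and $\operatorname{SL}_2(\mathbb{F}_3)$, already covered by the other cases.

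Two small refinements you may as well have in hand. First, the subgroup of $G$ generated by all its unipotent elements is automatically normal in $G$ (unipotents form a union of $G$-conjugacy classes), so once you have identified it with some $\operatorname{SL}_2(\mathbb{F}_{p^\alpha})$ you get normality for free rather than having to argue it. Second, the index-at-most-$2$ statement separating (5a) from (5b) is cleanest via the observation that the normaliser of the image of $\operatorname{SL}_2(\mathbb{F}_{p^\alpha})$ in $\operatorname{PGL}_2(\overline{\mathbb{F}_p})$ is exactly $\operatorname{PGL}_2(\mathbb{F}_{p^\alpha})$, whose quotient by the image of $\operatorname{SL}_2(\mathbb{F}_{p^\alpha})$ has order at most $2$; this is a touch more direct than the action on opposite Borels you propose, though both routes close the argument.
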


\begin{definition}
In cases (5a) or (5b) the number $\alpha$ will be called the \textbf{level} of the group $G$.
\end{definition}

\subsubsection{Isogeny estimates}
Let $\lambda$ be a place of $E$. We write $\mathbb{F}_\lambda$ for the residue field at $\lambda$ and $f=[\mathbb{F}_\lambda:\mathbb{F}_\ell]$ for its inertia degree; 
recall that we have introduced the residual representation $G_\lambda$ in definition \ref{surf_def_GLambda}. The following two lemmas are simple variants of lemmas \ref{surf_lemma_Irreducibility} and \ref{surf_lemma_AbsoluteIrreducibility} respectively; we give a complete argument only for the former:

\begin{lemma}\label{surf_lemma_DimensionOne}
Let $\ell$ be a good prime for $A$ unramified in $E$, and let $\lambda$ be a place of $E$ above $\ell$. Suppose that $G_\lambda$ fixes a subspace $\Gamma$ of dimension 1 of $T_\lambda(A)/\lambda T_\lambda(A) \cong \mathbb{F}_\lambda^2$: then $\ell \leq b_0(A/K)$.
\end{lemma}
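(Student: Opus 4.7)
The plan is to transcribe the argument of Lemma \ref{surf_lemma_Irreducibility} into the $\operatorname{GL}_2$-type setting, using the factorization $\mathcal{O}_\ell \cong \prod_{\lambda\mid\ell}\mathcal{O}_\lambda$ available for good primes unramified in $E$. View $\Gamma$ as a Galois-stable $K$-subgroup-scheme of $A$ of order $|\mathbb{F}_\lambda|=\ell^f$, where $f=[\mathbb{F}_\lambda:\mathbb{F}_\ell]$, set $A^\ast=A/\Gamma$, let $\pi:A\to A^\ast$ be the canonical projection (of degree $\ell^f$), and apply the isogeny theorem to obtain a $K$-isogeny $\Psi:A^\ast\to A$ with $\deg\Psi\leq b_0(A/K)$.

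Next I would analyze the endomorphism $m:=\Psi\circ\pi\in\operatorname{End}_K(A)=R\subseteq \mathcal{O}_E$ (here is where the assumption that all endomorphisms are defined over $K$ enters). Because $\ell$ is good and unramified in $E$, we have $T_\lambda(A)$ free of rank $2$ over $\mathcal{O}_\lambda$, so $A[\lambda]\cong T_\lambda(A)/\lambda T_\lambda(A)$ is a free module of rank $2$ over the residue field $\mathcal{O}_E/\lambda\cong\mathbb{F}_\lambda$. Any element of $\mathcal{O}_E$ acts on $A[\lambda]$ through its reduction in $\mathbb{F}_\lambda$, i.e. as a scalar. Since $m$ kills the nonzero subspace $\Gamma\subseteq A[\lambda]$, this scalar must be $0$, equivalently $\lambda\mid m$ in $\mathcal{O}_E$.

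The last step is to compare degrees. Viewing $m\in\mathcal{O}_E\subseteq\mathcal{O}_\ell$ acting on the rank-$2$ free $\mathcal{O}_\ell$-module $T_\ell(A)$ as multiplication by $m$, the degree of $[m]:A\to A$ on $\mathbb{Z}_\ell$-Tate modules is $N_{\mathcal{O}_\ell/\mathbb{Z}_\ell}(m^2)=N_{E/\mathbb{Q}}(m)^2$. Therefore
\[
N_{E/\mathbb{Q}}(m)^2=\deg(\Psi\circ\pi)=\deg(\Psi)\cdot\deg(\pi)\leq b_0(A/K)\cdot\ell^f,
\]
while $\lambda\mid m$ forces $\ell^f=N_{E/\mathbb{Q}}(\lambda)\mid N_{E/\mathbb{Q}}(m)$, so $\ell^{2f}\leq N_{E/\mathbb{Q}}(m)^2\leq b_0(A/K)\cdot\ell^f$, giving $\ell\leq \ell^f\leq b_0(A/K)$. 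The only subtle points are the bookkeeping around $A[\lambda]$ as a module over $\mathcal{O}_E/\lambda$ and the computation of $\deg[m]$, both of which rely on the hypotheses that $\ell$ is good and unramified in $E$; once these are in hand, the argument is a direct adaptation of the $\operatorname{End}_K(A)=\mathbb{Z}$ case.
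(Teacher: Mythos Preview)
Your proof is correct and follows essentially the same route as the paper's: form $A/\Gamma$, pull back an isogeny of bounded degree, observe that the resulting endomorphism $m\in\mathcal{O}_E$ acts on $A[\lambda]$ via its residue in $\mathbb{F}_\lambda$ (hence lies in $\lambda$), and compare degrees using $\deg(m)=N_{E/\mathbb{Q}}(m)^2$. The only cosmetic difference is that the paper cites \cite[Ch.~5, Cor.~1.3]{BirkenhakeLange} for the degree formula, whereas you recover it directly from the $\mathcal{O}_\ell$-module structure of $T_\ell(A)$.
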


\begin{proof}
$\Gamma$ is fixed by $G_\lambda$ and therefore defined over $K$. Consider the $K$-variety $A^*=A/\Gamma$, which comes equipped with a natural isogeny $\pi: A \twoheadrightarrow A^*$ of degree $|\Gamma|=|\mathbb{F}_\lambda|=\ell^f$. Choose a $K$-isogeny $\psi:A^* \to A$ of degree $b \leq b_0(A/K)$. The composition $\psi \circ \pi$ is an endomorphism of $A$, so by hypothesis it is given by a certain $e \in \operatorname{End}(A) \subseteq \mathcal{O}_E$. Notice now that $e$ kills $\Gamma$, and on the other hand the action of $e$ on $T_\lambda(A)/\lambda T_\lambda(A)$ is through multiplication by the class of $e$ in $\mathcal{O}_E/\lambda \cong \mathbb{F}_\lambda$. It follows that $e$ reduces to 0 in $\mathbb{F}_\lambda$, that is, $e$ belongs to the ideal $\lambda$; hence $\deg(e)=N_{E/\mathbb{Q}}(e)^{2}$ (for this equality cf.~\cite{BirkenhakeLange}, Chapter 5, Corollary 1.3) is divisible by $N_{E/\mathbb{Q}}(\lambda)^{2}$, which is just $|\mathbb{F}_\lambda|^{2}=\ell^{2f}$. Comparing degrees we have $\ell^{2f} \bigm\vert \deg(e)=b\ell^f$, so $\ell^{f}$ divides $b$ which, in turn, is at most $b_0(A/K)$.
\end{proof}



\begin{lemma}\label{surf_lemma_DimensionTwo}
Let $\ell$ and $\lambda$ be as in the previous lemma. If $G_\lambda$ is abelian, then $\ell^2 \leq b_0\left(A^2/K\right)$.
\end{lemma}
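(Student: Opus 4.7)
The plan is to run the proof in direct analogy with Lemma~\ref{surf_lemma_AbsoluteIrreducibility}, but using the $\mathcal{O}_E$-structure on the $\lambda$-adic Tate module in place of the $\mathbb{F}_\ell$-scalars. Since $G_\lambda$ is abelian and $A[\lambda] \cong \mathbb{F}_\lambda^2$ is two-dimensional over $\mathbb{F}_\lambda$, its centralizer inside $\operatorname{End}_{\mathbb{F}_\lambda}(A[\lambda]) \cong M_2(\mathbb{F}_\lambda)$ strictly contains the scalars: either $G_\lambda$ itself contains a non-scalar element, which then centralizes $G_\lambda$, or $G_\lambda$ consists of scalars and the whole matrix algebra centralizes it. Pick $\alpha$ in this centralizer with $\alpha \notin \mathbb{F}_\lambda \cdot \operatorname{Id}$ and form
\[
\Gamma = \{(x, \alpha x) : x \in A[\lambda]\} \subset A^2.
\]
Because $\alpha$ commutes with $G_\lambda$ and the Galois action on $A[\lambda]$ factors through $G_\lambda$, the group $\Gamma$ is stable under $\abGal{K}$; it is therefore a $K$-rational subgroup scheme of $A^2$ of order $|A[\lambda]| = \ell^{2f}$, where $f = [\mathbb{F}_\lambda : \mathbb{F}_\ell]$.

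Next I would set $A^* := A^2/\Gamma$, take the projection $\pi : A^2 \to A^*$ (of degree $\ell^{2f}$), and pick an isogeny $\psi : A^* \to A^2$ of degree at most $b_0(A^2/K)$. The composition $M := \psi \circ \pi$ lies in $\operatorname{End}_K(A^2) = M_2(R) \subset M_2(\mathcal{O}_E)$; writing $M = \bigl(\begin{smallmatrix} a & b \\ c & d \end{smallmatrix}\bigr)$, the condition that $M$ kills $\Gamma$ reduces, modulo $\lambda$, to the two relations $\bar a + \bar b \alpha = 0$ and $\bar c + \bar d \alpha = 0$ in $\operatorname{End}_{\mathbb{F}_\lambda}(A[\lambda])$. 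Were any of $\bar b, \bar d$ nonzero, one could solve for $\alpha \in \mathbb{F}_\lambda$, contradicting our choice; hence $a, b, c, d$ all lie in $\lambda$, and therefore $\det M = ad - bc$ lies in $\lambda^2$. Using the formula $\deg M = N_{E/\mathbb{Q}}(\det M)^2$ (the natural $2 \times 2$ matrix analogue of the relation $\deg(e) = N_{E/\mathbb{Q}}(e)^2$ invoked in the proof of Lemma~\ref{surf_lemma_DimensionOne}), the $\ell$-adic valuation of $\deg M$ is at least $4f$, while $v_\ell(\deg \pi) = 2f$. Cancelling yields $\ell^{2f} \mid \deg \psi \leq b_0(A^2/K)$, and since $f \geq 1$ we conclude $\ell^2 \leq b_0(A^2/K)$.

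The only genuine subtlety is the opening observation that $G_\lambda$ abelian already guarantees a non-scalar centralizing element, independently of whether $G_\lambda$ itself contains non-scalars; once this is settled, the argument is simply the $\lambda$-adic bookkeeping version of Lemma~\ref{surf_lemma_AbsoluteIrreducibility}, with the main care-points being the correct cardinality $|A[\lambda]| = \ell^{2f}$ (since $A[\lambda]$ has $\mathbb{F}_\lambda$-dimension $2$) and the correct degree formula $N_{E/\mathbb{Q}}(\det M)^2$ on $A^2$.
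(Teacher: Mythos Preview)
Your proposal is correct and is precisely the argument the paper has in mind: the paper does not write out a proof of this lemma at all, noting only that it is a ``simple variant'' of Lemma~\ref{surf_lemma_AbsoluteIrreducibility}, and your proof supplies exactly that variant, with the $\mathbb{F}_\lambda$-structure replacing the $\mathbb{F}_\ell$-scalars and the degree computed via $N_{E/\mathbb{Q}}$. One cosmetic point: your sentence ``Were any of $\bar b, \bar d$ nonzero\ldots'' only explicitly treats $b$ and $d$; to conclude that all four entries lie in $\lambda$, add that once $\bar b = 0$ the relation $\bar a + \bar b\,\alpha = 0$ forces $\bar a = 0$ (and likewise for $\bar c$).
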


The group $H_\ell$ of the following definition is the natural candidate for the image of $\rho_{\ell}$, for $\ell \gg 0$: it is the largest (connected) group whose elements are simultaneously symplectic similitudes for the Weil pairing and contained in the centralizer of the action of $E$.

\begin{definition}\label{surf_def_Hl}
Let $\ell$ be a good prime that does not ramify in $E$. We set
\[
H_\ell =\left\{ (h_\lambda)_{\lambda | \ell} \in \prod_{\lambda|\ell} \operatorname{GL}_2(\mathbb{F}_\lambda) \bigm\vert \det(h_{\lambda_1})=\det(h_{\lambda_2}) \in \mathbb{F}_\ell^\times \quad \forall \lambda_1, \lambda_2 | \ell \right\},
\]
where the product is over the places of $E$ that divide $\ell$.
\end{definition}
\begin{lemma}\label{surf_lemma_GlHl}
If $\ell$ is a good prime unramified in $E$, the group $G_\ell$ is contained in $H_\ell$.
\end{lemma}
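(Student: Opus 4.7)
The plan is to read off the statement directly from the structural results already collected, especially Lemma \ref{surf_lemma_DescriptionDeterminant}. First I would unpack what $G_\ell$ even means in this setting: the mod-$\ell$ representation factors through the $\mathcal{O}_\ell$-linear action on $T_\ell(A)/\ell T_\ell(A)$, and since $\ell$ is good and unramified in $E$ we have the canonical decomposition
\[
T_\ell(A)/\ell T_\ell(A) \;\cong\; \bigoplus_{\lambda\mid\ell} T_\lambda(A)/\lambda T_\lambda(A) \;\cong\; \bigoplus_{\lambda\mid\ell} \mathbb{F}_\lambda^2,
\]
so $G_\ell$ embeds in $\prod_{\lambda\mid\ell}\operatorname{GL}_2(\mathbb{F}_\lambda)$ via the tuple $(\rho_\lambda(g))_{\lambda\mid\ell}$. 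This is what is needed to even compare $G_\ell$ with the target group $H_\ell$ of Definition \ref{surf_def_Hl}.

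Next I would compute the determinants componentwise. By Lemma \ref{surf_lemma_DescriptionDeterminant}, for every $\lambda\mid\ell$ the $\mathcal{O}_\lambda$-linear determinant of $\rho_{\lambda^\infty}$ equals the $\ell$-adic cyclotomic character $\chi_\ell:\abGal{K}\to\mathbb{Z}_\ell^\times \hookrightarrow \mathcal{O}_\lambda^\times$. Reducing modulo $\lambda$, this says that for any $g \in \abGal{K}$ the determinant of $\rho_\lambda(g)\in\operatorname{GL}_2(\mathbb{F}_\lambda)$ is the image of $\chi_\ell(g)\bmod\ell\in\mathbb{F}_\ell^\times$ under the inclusion $\mathbb{F}_\ell\hookrightarrow\mathbb{F}_\lambda$. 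In particular, for any two places $\lambda_1,\lambda_2\mid\ell$ we have $\det\rho_{\lambda_1}(g)=\det\rho_{\lambda_2}(g)\in\mathbb{F}_\ell^\times$, which is precisely the defining condition of $H_\ell$.

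There is really no obstacle here: the lemma is a direct packaging of Lemma \ref{surf_lemma_DescriptionDeterminant} together with the standard decomposition $T_\ell(A) \cong \bigoplus_\lambda T_\lambda(A)$. The only thing one should briefly verify is that the cyclotomic character is nontrivial modulo $\ell$, i.e.\ that $\chi_\ell(g)\bmod \ell$ really lands in $\mathbb{F}_\ell^\times$ and not just $\mathbb{F}_\ell$; but $\chi_\ell$ takes values in $\mathbb{Z}_\ell^\times$, so its reduction automatically lies in $\mathbb{F}_\ell^\times$. Hence $G_\ell\subseteq H_\ell$, and the proof is complete in a few lines.
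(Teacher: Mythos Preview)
Your argument is correct and is essentially the same as the paper's: both reduce the claim to Lemma~\ref{surf_lemma_DescriptionDeterminant}, observing that each $\det\rho_\lambda$ is the reduction of the cyclotomic character and hence all components have the same determinant lying in $\mathbb{F}_\ell^\times$. You have simply spelled out in more detail the decomposition of $G_\ell$ inside $\prod_{\lambda\mid\ell}\operatorname{GL}_2(\mathbb{F}_\lambda)$ and the trivial observation that $\chi_\ell\bmod\ell$ lands in $\mathbb{F}_\ell^\times$.
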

\begin{proof}
The determinant of every $\rho_\lambda$ agrees with the cyclotomic character (lemma \ref{surf_lemma_DescriptionDeterminant}), so any two $h_\lambda$ have the same determinant.
\end{proof}

\medskip

To ease the notation we introduce the following definition:
\begin{definition}
Let $A/K$ be an abelian variety. We set
\[
M(A/K):=b(2[K:\mathbb{Q}],2\dim(A),2h(A))^{1/2}.
\]
\end{definition}

\begin{lemma}\label{surf_lemma_SL2Fl}
Let $\ell$ be a good prime, unramified in $E$ and strictly larger than $M(A/K)$, and let $\lambda$ be a place of $E$ above $\ell$. Up to conjugacy in $\operatorname{GL}_2(\mathbb{F}_\lambda)$, the group $G_\lambda$ contains $\operatorname{SL}_2(\mathbb{F}_\ell)$.
\end{lemma}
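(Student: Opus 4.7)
The plan is to apply Dickson's theorem (Theorem \ref{surf_thm_Dickson}) to $G_\lambda$ regarded as a subgroup of $\operatorname{GL}_2(\mathbb{F}_\lambda)$ and to exclude every case except (5a) and (5b); in both of those, $G_\lambda$ contains $\operatorname{SL}_2(\mathbb{F}_{\ell^\alpha})$ for some integer $\alpha \geq 1$, and therefore \textit{a fortiori} $G_\lambda \supseteq \operatorname{SL}_2(\mathbb{F}_\ell)$ up to conjugacy in $\operatorname{GL}_2(\mathbb{F}_\lambda)$. The hypothesis $\ell > M(A/K) = b(2[K:\mathbb{Q}], 2\dim A, 2h(A))^{1/2}$ is designed so that both isogeny estimates \ref{surf_lemma_DimensionOne} and \ref{surf_lemma_DimensionTwo} can be applied not only over $K$ but also over any quadratic extension $K'/K$, and the factor of $2$ in the first argument of $b$ is exactly what one gains this way.

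With the strategy fixed, cases (1)--(4) of Dickson are disposed of directly by the isogeny lemmas. If $G_\lambda$ is cyclic (case 1), in particular abelian, Lemma \ref{surf_lemma_DimensionTwo} gives $\ell^{2} \leq b_0(A^2/K) \leq b([K:\mathbb{Q}], 2\dim A, 2h(A)) \leq M(A/K)^2$, contradicting $\ell > M(A/K)$. If $G_\lambda$ is contained in a Borel (case 2), it fixes a one-dimensional subspace of $T_\lambda(A)/\lambda T_\lambda(A)$, and Lemma \ref{surf_lemma_DimensionOne} yields $\ell \leq b_0(A/K) < M(A/K)$, again a contradiction. In cases (3) and (4) the prescribed index-$2$ subgroup of $G_\lambda$ (automatically normal) is the image $\rho_\lambda(\abGal{K'})$ for a quadratic extension $K'/K$, and this image is cyclic (case 3) or contained in the diagonal torus (case 4). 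Applying Lemma \ref{surf_lemma_DimensionTwo} (respectively Lemma \ref{surf_lemma_DimensionOne}) over $K'$, and using $[K':\mathbb{Q}] = 2[K:\mathbb{Q}]$ together with $h(A^2) = 2h(A)$, one obtains $\ell^{2} \leq b(2[K:\mathbb{Q}], 2\dim A, 2h(A)) = M(A/K)^2$ (respectively $\ell \leq b(2[K:\mathbb{Q}], \dim A, h(A)) < M(A/K)$), contradicting the hypothesis in either subcase.

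The main obstacle is posed by the exceptional cases (6) and (7), where $|\mathbb{P}G_\lambda| \leq 120$ but the scalar subgroup of $G_\lambda$ can still be large. The naïve idea of passing to the degree-at-most-$120$ extension over which the image becomes abelian is hopeless, since it would require a bound of shape $\ell^{2} \leq b(120[K:\mathbb{Q}], \ldots)$, vastly larger than $M(A/K)^2$ because $b$ grows super-exponentially in its first entry. My plan is to exploit the determinant instead. By Lemma \ref{surf_lemma_DescriptionDeterminant}, $\det \rho_\lambda$ equals the mod-$\ell$ cyclotomic character $\chi_\ell$, so its image inside $\mathbb{F}_\ell^\times \subseteq \mathbb{F}_\lambda^\times$ is cyclic of order at least $(\ell-1)/[K:\mathbb{Q}]$, which grows with $\ell$. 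In cases (6)/(7), on the other hand, every $g \in G_\lambda$ satisfies $g^{N}$ scalar for some $N \leq 60$, which forces the ratio of the two eigenvalues of every such $g$ to be a root of unity of order at most $60$ and hence $(\operatorname{tr} g)^{2}/\det g$ to lie in an explicit finite subset of $\overline{\mathbb{F}_\lambda}$ depending only on $N$. Combined with the lower bound just obtained for the order of $\det G_\lambda$, this produces polynomial identities on eigenvalues of elements of $G_\lambda$ that cannot be simultaneously satisfied once $\ell > M(A/K)$, ruling out the exceptional Dickson types. Once every case except (5a) and (5b) has been excluded, the conclusion follows at once.
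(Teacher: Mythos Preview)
Your treatment of Dickson cases (1)--(4) is essentially the paper's: it likewise reduces them to Lemmas \ref{surf_lemma_DimensionOne} and \ref{surf_lemma_DimensionTwo} over $K$ or a quadratic extension. The gap is in the exceptional cases (6)--(7). Your dismissal of the ``naïve'' approach rests on a false growth estimate: $b(d,g,h)$ grows only \emph{polynomially} in $d$ (roughly like $d^{\alpha(g)}$ times logarithmic factors), not super-exponentially; it is the growth in $g$ that is violent. Since $M(A/K)=b(2[K:\mathbb{Q}],2g,2h(A))^{1/2}$ involves the doubled dimension, the exponent $\alpha(2g)=8\alpha(g)$ easily absorbs a factor of $60$ in the degree. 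This is precisely what the paper does: over the extension $K''$ of degree at most $60$ cut out by $\ker\bigl(\abGal{K}\to\mathbb{P}G_\lambda\bigr)$, the image of $\rho_\lambda$ consists of \emph{scalars} (not merely an abelian group), so Lemma \ref{surf_lemma_DimensionOne} --- not \ref{surf_lemma_DimensionTwo} --- applies and yields $\ell\leq b_0(A/K'')\leq b(60[K:\mathbb{Q}],g,h(A))$; a direct comparison of exponents then shows this is at most $M(A/K)$.

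Your proposed alternative via the determinant does not lead to a contradiction as stated. A small projective image is perfectly compatible with a large determinant image: for instance, the subgroup of $\operatorname{GL}_2(\overline{\mathbb{F}_\ell})$ generated by the scalars $\mathbb{F}_\ell^\times\cdot\operatorname{Id}$ together with a lift of $A_5$ into $\operatorname{SL}_2$ has $\mathbb{P}G\cong A_5$, determinant image $(\mathbb{F}_\ell^\times)^2$, and every element satisfies your constraint that $(\operatorname{tr} g)^2/\det g$ lie in a fixed finite set. So the observation that eigenvalue ratios have bounded order gives no leverage against the scalar part. Turning this into an actual bound on $\ell$ would require a further arithmetic ingredient (Weil bounds on Frobenius eigenvalues together with some effective density argument), which your sketch does not supply and which the hypothesis $\ell>M(A/K)$ does not encode.
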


\begin{proof}
Suppose by contradiction that no $\operatorname{GL}_2(\mathbb{F}_\lambda)$-conjugate of $G_\lambda$ contains $\operatorname{SL}_2(\mathbb{F}_\ell)$: we shall prove that $\ell$ does not exceed $M(A/K)$.
By the Dickson classification (theorem \ref{surf_thm_Dickson}; cf.~also \cite[§2]{MR0387283}) we know that the following are the only possibilities (again, up to conjugacy in $\operatorname{GL}_2(\mathbb{F}_\lambda)$):

\begin{enumerate}[(I)]
\item $G_\lambda$ is contained in a Borel subgroup of $\operatorname{GL}_2(\mathbb{F}_\lambda)$: by definition, such a subgroup fixes a line, therefore $\ell \leq b(A/K)$ by lemma \ref{surf_lemma_DimensionOne}.

\item $G_\lambda$ is contained in the normalizer of a Cartan subgroup of $\operatorname{GL}_2(\mathbb{F}_\lambda)$: let $C$ be this Cartan subgroup and $N$ its normalizer. By the Dickson classification, the index $[N:C]$ is $2$, so the morphism
\[
\displaystyle \abGal{K} \to G_\lambda \to \frac{G_\lambda}{G_\lambda \cap C} \hookrightarrow \frac{N}{C}
\]
induces a quadratic character of $\abGal{K}$. Let $K'$ be the fixed field of the kernel of this character: we have $[K':K] \leq |N/C|=2$, and by construction the image of $\abGal{K'}$ in $\operatorname{Aut}\left(A[\lambda]\right)$ is contained in $C$. Applying lemma \ref{surf_lemma_DimensionTwo} to $A_{K'}$ we see that $\ell$ is at most $b\left(A^2/K'\right)^{1/2}$.

\item The projective image $\mathbb{P}G_\lambda$ of $G_\lambda$ is a finite group of order at most 60: by lemma \ref{surf_lemma_DimensionOne} we have $\ell \leq b(A/K'')$, where $K''$ is the fixed field of the kernel of $\abGal{K} \to G_\lambda \to \mathbb{P}G_\lambda$.
\end{enumerate}

It is clear that $G_\lambda$ does not fall in any of the previous cases -- and therefore contains $\operatorname{SL}_2(\mathbb{F}_\ell)$ up to conjugacy -- as long as $\ell$ is larger than $\max\left\{ b(A/K), b\left(A^2/K \right)^{1/2}, b(A^2/K')^{1/2}, b(A/K'') \right\}$. It is immediate to check that this maximum does not exceed $M(A/K)$.
\end{proof}

\medskip

Let $\ell$ be a good prime unramified in $E$, and write $\prod_{i=1}^n \lambda_i$ for its factorization in $\mathcal{O}_E$. We now aim to show that, for every $\lambda_i$ lying above $\ell$, the group $G_{\lambda_i}$ contains $\operatorname{SL}_2(\mathbb{F}_{\lambda_i})$.

Assume $\ell > M(A/K)$, so that by lemma \ref{surf_lemma_SL2Fl} we know that (up to conjugacy) every $G_{\lambda_i}$ contains $\operatorname{SL}_2(\mathbb{F}_\ell)$. We  set $\beta_i=[\mathbb{F}_{\lambda_i} : \mathbb{F}_\ell]$, so that $\ell^{\beta_i}$ is the order of the residue field at $\lambda_i$.
The assumption that $G_{\lambda_i}$ contains $\operatorname{SL}_2(\mathbb{F}_\ell)$ immediately implies that $G_{\lambda_i}$ must be of type (5a) or (5b) in the notation of theorem \ref{surf_thm_Dickson}. Suppose first $G_{\lambda_i}$ is of type (5a), generated (up to conjugacy) by $\operatorname{SL}_2\left( \mathbb{F}_{\ell^{\alpha_i}} \right)$ and by a scalar matrix $V=\mu \cdot \operatorname{Id}$. Since the determinant of any element in $G_{\lambda_i}$ lies in $\mathbb{F}_\ell^\times$ we know that $\det V=\mu^2$ is an element of $\mathbb{F}_{\ell}$, hence $V^2 \in \operatorname{GL}_2(\mathbb{F}_\ell)$. In particular, $G_{\lambda_i}$ contains as a subgroup of index 2 the group generated by $\operatorname{SL}_2\left( \mathbb{F}_{\ell^{\alpha_i}} \right)$ and $V^2$, which is a subgroup of $\operatorname{GL}_2\left(\mathbb{F}_{\ell^{\alpha_i}} \right)$. Furthermore, if $G_{\lambda_i}$ is of type (5b), then it contains a group of type (5a) as a subgroup of index 2. We thus deduce:
\begin{lemma}\label{surf_lemma_level}
Let $\ell$ be a good prime unramified in $E$, and suppose $\ell > M(A/K)$, so that $G_{\lambda_i}$ is of type (5a) or (5b). Let $\alpha_i$ be the level of $G_{\lambda_i}$. There exists an extension $K'$ of $K$, of degree at most 4, such that -- up to conjugacy -- the image of $
\rho_{\lambda_i} : \abGal{K'} \to \operatorname{GL}_2\left(\mathbb{F}_{\lambda_i} \right)$ is contained in $\operatorname{GL}_2\left(\mathbb{F}_{\ell^{\alpha_i}} \right)$.
\end{lemma}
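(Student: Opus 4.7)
The idea is to exhibit a subgroup $H$ of $G_{\lambda_i}$ of index at most $4$ that is already contained in $\operatorname{GL}_2(\mathbb{F}_{\ell^{\alpha_i}})$, and then take $K'$ to be the subfield of $\overline{K}$ fixed by $\rho_{\lambda_i}^{-1}(H)$. Since $\rho_{\lambda_i}^{-1}(H)$ is open of index $[G_{\lambda_i}:H] \leq 4$ in $\abGal{K}$, this automatically gives $[K':K] \leq 4$ together with $\rho_{\lambda_i}(\abGal{K'}) \subseteq H \subseteq \operatorname{GL}_2(\mathbb{F}_{\ell^{\alpha_i}})$, which is precisely what the lemma requires.

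The construction of $H$ is essentially carried out in the paragraph immediately preceding the statement; one only has to repackage it. If $G_{\lambda_i}$ is of type (5a), then up to conjugation in $\operatorname{GL}_2(\mathbb{F}_{\lambda_i})$ it is generated by $\operatorname{SL}_2(\mathbb{F}_{\ell^{\alpha_i}})$ and a scalar matrix $V = \mu \cdot \operatorname{Id}$. Combining Lemma \ref{surf_lemma_GlHl} with Lemma \ref{surf_lemma_DescriptionDeterminant} gives $\det V = \mu^2 \in \mathbb{F}_\ell^\times$, so $V^2 \in \operatorname{GL}_2(\mathbb{F}_{\ell^{\alpha_i}})$, and we set $H = \langle \operatorname{SL}_2(\mathbb{F}_{\ell^{\alpha_i}}), V^2 \rangle$; by construction $H$ sits inside $\operatorname{GL}_2(\mathbb{F}_{\ell^{\alpha_i}})$ with index at most $2$ in $G_{\lambda_i}$. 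If instead $G_{\lambda_i}$ is of type (5b), Dickson's classification provides an index-$2$ subgroup of $G_{\lambda_i}$ of type (5a), to which the previous construction applies, costing an extra factor of $2$ and yielding the overall index bound $[G_{\lambda_i}:H] \leq 4$.

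There is no real obstacle here: the lemma is a bookkeeping statement that combines Dickson's classification with the determinant constraint enforced by the Weil pairing. The only small point to keep in mind is that the subgroup $H$ constructed above need not be normal in $G_{\lambda_i}$, so that $K'/K$ is in general not Galois; this is harmless because only a degree bound, and not normality, is asserted. Once $H$ and $K'$ are in place, the conclusion is immediate from the Galois correspondence applied to the open subgroup $\rho_{\lambda_i}^{-1}(H)$ of $\abGal{K}$.
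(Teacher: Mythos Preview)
Your proof is correct and follows exactly the same route as the paper: the argument in the paragraph immediately preceding the lemma constructs the index-at-most-$2$ subgroup $\langle \operatorname{SL}_2(\mathbb{F}_{\ell^{\alpha_i}}), V^2\rangle$ in type (5a) via the determinant constraint $\mu^2\in\mathbb{F}_\ell^\times$, and then passes to an index-$2$ subgroup in type (5b), just as you do. The only addition in your write-up is the explicit passage to $K'$ via the Galois correspondence, which the paper leaves implicit; your caveat about non-normality of $H$ is in fact unnecessary here (conjugation by the extra diagonal generator in type (5b) preserves $\operatorname{SL}_2(\mathbb{F}_{\ell^{\alpha_i}})$ and $V^2$ is central), but it does no harm.
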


Next we show that -- at least for $\ell$ large enough -- the level $\alpha_i$ must necessarily equal $\beta_i$, the degree $\left[\mathbb{F}_{\lambda_i}:\mathbb{F}_\ell\right]$:
\begin{lemma}\label{surf_lemma_SurjOnSL2Lambda}
Let $\ell$ be a good prime unramified in $E$. 
Suppose that, up to conjugacy in $\operatorname{GL}_2(\mathbb{F}_{\lambda_i})$, the group $G_{\lambda_i}$ is contained in $\operatorname{GL}_2\left(\mathbb{F}_{\ell^{\alpha_i}} \right)$ for some $\alpha_i < \beta_i$. Then $\ell \leq b_0(A/K)^{1/2}$.
\end{lemma}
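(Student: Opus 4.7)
The plan is to imitate the proof of Lemma \ref{surf_lemma_DimensionOne}, with the Galois-stable subgroup of $A[\lambda_i]$ supplied directly by the hypothesis instead of coming from the existence of a fixed line. First I would unwind the hypothesis concretely: if $G_{\lambda_i}$ is, up to conjugacy in $\operatorname{GL}_2(\mathbb{F}_{\lambda_i})$, contained in $\operatorname{GL}_2(\mathbb{F}_{\ell^{\alpha_i}})$, then there exists an $\mathbb{F}_{\lambda_i}$-basis $(e_1, e_2)$ of $A[\lambda_i]$ in which every matrix of $\rho_{\lambda_i}(\abGal{K})$ has all four entries in the subfield $\mathbb{F}_{\ell^{\alpha_i}} \subseteq \mathbb{F}_{\lambda_i}$. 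Consequently the $\mathbb{F}_{\ell^{\alpha_i}}$-span
\[
H = \mathbb{F}_{\ell^{\alpha_i}} e_1 + \mathbb{F}_{\ell^{\alpha_i}} e_2
\]
is stable under $\abGal{K}$, of order $\ell^{2\alpha_i}$, and strictly smaller than $A[\lambda_i]$ because $\alpha_i < \beta_i$.

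Next I would run the isogeny argument. Since $H$ is Galois-stable, the quotient $A^* = A/H$ is defined over $K$, and the natural projection $\pi : A \twoheadrightarrow A^*$ has degree $|H| = \ell^{2\alpha_i}$. The definition of $b_0(A/K)$ supplies a $K$-isogeny $\psi : A^* \to A$ of degree at most $b_0(A/K)$, and the composition $e := \psi \circ \pi$ is an element of $\operatorname{End}_K(A) \subseteq \mathcal{O}_E$.

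Finally I would extract the bound exactly as in Lemma \ref{surf_lemma_DimensionOne}. The action of $\mathcal{O}_E$ on $A[\lambda_i]$ factors through the residue field $\mathcal{O}_E/\lambda_i = \mathbb{F}_{\lambda_i}$, and is invertible as soon as the reduction of the acting element is nonzero; since $e$ kills the nonempty set $H \subseteq A[\lambda_i]$, this forces $e \in \lambda_i$. Hence $N_{E/\mathbb{Q}}(\lambda_i) = \ell^{\beta_i}$ divides $N_{E/\mathbb{Q}}(e)$, and plugging the identity $\deg(e) = N_{E/\mathbb{Q}}(e)^2$ into the inequality $\deg(e) = \deg(\pi)\deg(\psi) \leq \ell^{2\alpha_i}\,b_0(A/K)$ gives $\ell^{2\beta_i} \leq \ell^{2\alpha_i}\,b_0(A/K)$, that is $\ell^{2(\beta_i - \alpha_i)} \leq b_0(A/K)$. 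Since $\alpha_i < \beta_i$ implies $\beta_i - \alpha_i \geq 1$, we conclude $\ell \leq b_0(A/K)^{1/2}$, as required. There is no real obstacle in the argument beyond the observation made in the first paragraph: once one sees that the level condition immediately produces a Galois-stable subgroup of $A[\lambda_i]$ (and not merely an $\mathbb{F}_{\lambda_i}$-subspace), the rest is a routine application of the isogeny theorem.
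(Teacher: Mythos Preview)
Your argument is correct and is essentially the paper's own proof: the Galois-stable subgroup $H=\mathbb{F}_{\ell^{\alpha_i}}e_1+\mathbb{F}_{\ell^{\alpha_i}}e_2\subset A[\lambda_i]$ is exactly the subgroup the paper denotes $\Gamma$ (viewed inside $\prod_{\lambda\mid\ell}A[\lambda]$ with zero components at $\lambda\neq\lambda_i$), and from there the isogeny and norm estimates proceed identically. There is no substantive difference between the two presentations.
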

\begin{proof}
For every place $\lambda$ of $E$ above $\ell$ we can identify $A[\lambda]$ with $\mathbb{F}_\lambda^2$, and for the factor $A[\lambda_i]$ the hypothesis allows us to choose coordinates in such a way that the image of $\rho_{\lambda_i}$ is contained $\operatorname{GL}_2\left(\mathbb{F}_{\ell^{\alpha_i}} \right)$. Consider now the subspace of $A[\ell]$ given by
\[
\Gamma=\left\{ (x_\lambda) \in \prod_{\lambda | \ell}A[\lambda] \cong \prod_{\lambda | \ell} \mathbb{F}_{\lambda}^2 \bigm\vert x_{\lambda_i} \in \left(\mathbb{F}_{\ell^{\alpha_i}}\right)^{2}, x_\lambda=0 \text{ for } \lambda \neq \lambda_i \right\}.
\]

By construction, $\Gamma$ is Galois-stable: indeed for any $g \in \abGal{K}$ and every $(x_\lambda) \in \Gamma$ we have
\[
\left(\rho_\lambda(g) \cdot x_{\lambda}\right)_{\lambda_i} = \rho_{\lambda_i}(g) \cdot x_{\lambda_i} \in \left(\mathbb{F}_{\ell^{\alpha_i}}\right)^2,
\]
since both the coefficients of the vector $x_{\lambda_i}$ and those of the matrix $\rho_{\lambda_i}(g)$ lie in $\mathbb{F}_{\ell^{\alpha_i}}$.
It follows that the abelian variety $A'=A/\Gamma$ is defined over $K$, and there are isogenies $\pi:A \to A'$ (the canonical projection, of degree $\ell^{2\alpha_i}$) and $\psi:A' \to A$ (which can be chosen to be of degree at most $b_0(A/K)$). Notice now that the endomorphism $e:=\psi \circ \pi \in \mathcal{O}_E$ of $A$ kills $\Gamma$; since the action of $e$ on $A[\lambda_i]$ is given by the class $[e]$ of $e$ in $\mathbb{F}_{\lambda_i}$, it follows that $[e]=0$, that is, $e$ belongs to the ideal $\lambda_i$. Thus we see that the degree of $\psi \circ \pi$ satisfies
\[
\ell^{2\beta_i} = N_{E/\mathbb{Q}}\left( \lambda_i \right)^2 \bigm\vert N_{E/\mathbb{Q}}\left( e \right)^2 \bigm\vert \operatorname{deg}\left(\psi \circ \pi\right) = \ell^{2\alpha_i} \operatorname{deg} \psi \leq \ell^{2\alpha_i} b_0(A/K),
\]
and therefore we have $\ell^2 \leq \ell^{2(\beta_i-\alpha_i)} \leq b_0(A/K)$. 
\end{proof}

\medskip

Combining the previous two lemmas we find
\begin{corollary}\label{surf_cor_TrueSurjOnOneFactor}
Let $\ell > M(A/K)$ be a good prime unramified in $E$, and let $\lambda$ be a place of $E$ above $\ell$. The image of the representation $\rho_{\lambda} : \abGal{K} \to \operatorname{GL}_2(\mathbb{F}_\lambda)$ contains $\operatorname{SL}_2(\mathbb{F}_\lambda)$.
\end{corollary}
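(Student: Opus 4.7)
The plan is to stitch together the three preceding lemmas in a short chain. By Lemma \ref{surf_lemma_SL2Fl}, the hypothesis $\ell > M(A/K)$ ensures that, up to conjugacy in $\operatorname{GL}_2(\mathbb{F}_\lambda)$, the group $G_\lambda$ contains $\operatorname{SL}_2(\mathbb{F}_\ell)$. Since $\operatorname{SL}_2(\mathbb{F}_\ell)$ is nonsolvable (recall $\ell \geq 5$), a quick inspection of Dickson's list (theorem \ref{surf_thm_Dickson}) shows that $G_\lambda$ must belong to one of the classes (5a) or (5b), and therefore has a well-defined level $\alpha_i$ dividing $\beta := [\mathbb{F}_\lambda : \mathbb{F}_\ell]$.

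First I would apply Lemma \ref{surf_lemma_level} to obtain an extension $K'/K$ of degree at most $4$ such that the image of $\rho_\lambda$ restricted to $\abGal{K'}$ is contained, up to conjugacy in $\operatorname{GL}_2(\mathbb{F}_\lambda)$, in $\operatorname{GL}_2(\mathbb{F}_{\ell^{\alpha_i}})$. Next I would feed this into Lemma \ref{surf_lemma_SurjOnSL2Lambda}, applied to $A$ viewed over $K'$: if $\alpha_i$ were strictly smaller than $\beta$, then the lemma would yield $\ell \leq b_0(A/K')^{1/2}$. The plan is then to show that this bound contradicts $\ell > M(A/K)$, which forces $\alpha_i = \beta$. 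Consequently $G_\lambda$ contains, up to conjugacy, $\operatorname{SL}_2(\mathbb{F}_{\ell^{\beta}}) = \operatorname{SL}_2(\mathbb{F}_\lambda)$; but $\operatorname{SL}_2(\mathbb{F}_\lambda)$ is the kernel of the determinant, hence normal in $\operatorname{GL}_2(\mathbb{F}_\lambda)$, so the qualifier ``up to conjugacy'' can be removed and the corollary follows.

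The only real point to check is the numerical comparison $b_0(A/K')^{1/2} \leq M(A/K)$. Using the isogeny theorem, one has
\[
b_0(A/K') \leq b([K':\mathbb{Q}], \dim A, h(A)) \leq b(4[K:\mathbb{Q}], \dim A, h(A)),
\]
and this must be compared with $M(A/K) = b(2[K:\mathbb{Q}], 2\dim A, 2h(A))^{1/2}$. The inequality comes out easily from the shape of $b$: since the exponent $\alpha(g) = 2^{10}g^{3}$ in definition \ref{surf_def_bFunction} grows by a factor of $8$ when $g$ is doubled, the effect of doubling the dimension and the height in $M(A/K)$ entirely dominates the loss incurred by reducing $[K':\mathbb{Q}] \leq 4[K:\mathbb{Q}]$ down to $2[K:\mathbb{Q}]$. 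I do not anticipate any other obstacle; the argument is essentially an exercise in bookkeeping on top of the substantive work already done in lemmas \ref{surf_lemma_SL2Fl}, \ref{surf_lemma_level}, and \ref{surf_lemma_SurjOnSL2Lambda}.
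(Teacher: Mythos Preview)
Your proposal is correct and follows essentially the same route as the paper's proof: use lemma \ref{surf_lemma_SL2Fl} to place $G_\lambda$ in type (5a) or (5b), apply lemma \ref{surf_lemma_level} to descend to a degree-$\leq 4$ extension $K'$, then invoke lemma \ref{surf_lemma_SurjOnSL2Lambda} over $K'$ and compare $b_0(A/K')^{1/2}$ with $M(A/K)$ to force $\alpha=\beta$. Your added remarks (the nonsolvability argument pinning down type (5a)/(5b), and the observation that normality of $\operatorname{SL}_2(\mathbb{F}_\lambda)$ in $\operatorname{GL}_2(\mathbb{F}_\lambda)$ lets one drop ``up to conjugacy'') are correct refinements the paper leaves implicit.
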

\begin{proof}
By lemma \ref{surf_lemma_SL2Fl} we know that $G_\lambda$ is of type (5a) or (5b) in the sense of theorem \ref{surf_thm_Dickson}. Let $\alpha$ be the level of $G_\lambda$ and $\beta=[\mathbb{F}_\lambda:\mathbb{F}_\ell]$; it is clear that it is enough to show $\alpha=\beta$. By lemma \ref{surf_lemma_level}, passing to an extension $K'$ of $K$ of degree at most 4 we can assume that (up to conjugacy) the image of $\rho_{\lambda} : \abGal{K} \to \operatorname{Aut} A[\lambda]$ is contained in $\operatorname{GL}_2\left(\mathbb{F}_{\ell^{\alpha}} \right)$. The corollary then follows from lemma \ref{surf_lemma_SurjOnSL2Lambda} (applied to $A/K'$) and the obvious inequality $M(A/K) > b(A/K')^{1/2} \geq b_0(A/K')^{1/2}$.
\end{proof}

\smallskip

\begin{lemma}\label{surf_lemma_SurjOnTwo}
Let $\lambda_1,\lambda_2$ be two places of $\mathcal{O}_E$ above the good prime $\ell$. Suppose $\ell$ is larger than $M(A/K)$ and unramified in $E$: then the image of $\abGal{K} \xrightarrow{\; \rho_{\lambda_1} \times \rho_{\lambda_2} \;} G_{\lambda_1} \times G_{\lambda_2}$ contains $\operatorname{SL}_2(\mathbb{F}_{\lambda_1}) \times \operatorname{SL}_2(\mathbb{F}_{\lambda_2})$.
\end{lemma}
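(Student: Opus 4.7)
Let $G$ be the image of $\rho_{\lambda_1}\times\rho_{\lambda_2}$ and set $H:=G\cap\bigl(\operatorname{SL}_2(\mathbb{F}_{\lambda_1})\times\operatorname{SL}_2(\mathbb{F}_{\lambda_2})\bigr)$. The first step is to check that $H$ projects surjectively onto each factor: Corollary \ref{surf_cor_TrueSurjOnOneFactor} shows that each $G_{\lambda_i}$ contains $\operatorname{SL}_2(\mathbb{F}_{\lambda_i})$, while Lemma \ref{surf_lemma_DescriptionDeterminant} forces the two determinants of any element of $G$ to agree (both equal $\chi_\ell$), so every lift of $g_1\in\operatorname{SL}_2(\mathbb{F}_{\lambda_1})$ has second component of determinant $1$, hence lying in $\operatorname{SL}_2(\mathbb{F}_{\lambda_2})$, and symmetrically. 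Since $\ell\geq 5$ the only normal subgroups of $\operatorname{SL}_2(\mathbb{F}_{\lambda_i})$ are $\{1\}$, the centre, and the whole group, so Goursat's lemma applied to $H$ yields a common quotient $Q$; either $Q$ is trivial (in which case $H=\operatorname{SL}_2(\mathbb{F}_{\lambda_1})\times\operatorname{SL}_2(\mathbb{F}_{\lambda_2})$ and we are done) or a simple order comparison using the simplicity of $\operatorname{PSL}_2$ forces $|\mathbb{F}_{\lambda_1}|=|\mathbb{F}_{\lambda_2}|=\ell^\beta$. In this latter case Lemma \ref{surf_lemma_51} supplies an element $f\in\operatorname{GL}_2(\mathbb{F}_{\ell^\beta})$, a field automorphism $\sigma$ of $\mathbb{F}_{\ell^\beta}$, and a quadratic character $\chi$ of $\abGal{K}$ such that $\rho_{\lambda_2}(g)=\chi(g)\cdot\sigma\bigl(f\rho_{\lambda_1}(g)f^{-1}\bigr)$ for every $g\in\abGal{K}$, and the remainder of the proof consists in excluding this ``bad'' configuration by an isogeny estimate in the spirit of Lemma \ref{surf_lemma_DimensionTwo}.

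To do so, I would pass to the at most quadratic extension $K'/K$ on which $\chi$ trivialises, so that $\rho_{\lambda_2}(g)=\sigma(f\rho_{\lambda_1}(g)f^{-1})$ for all $g\in\abGal{K'}$. Viewing $\sigma$ as an $\mathbb{F}_\ell$-linear involution of $\mathbb{F}_{\ell^\beta}^2$ acting coordinate-wise, the identity $\sigma(M)\sigma(v)=\sigma(Mv)$ shows that $\tilde f:=\sigma\circ f:A[\lambda_1]\to A[\lambda_2]$ is an $\mathbb{F}_\ell$-linear, $\abGal{K'}$-equivariant bijection. Consequently $\Gamma:=\{(x,\tilde f(x)):x\in A[\lambda_1]\}\subseteq A^2$ is a $K'$-rational subgroup of order $\ell^{2\beta}$, and Theorem \ref{surf_thm_Isogeny} produces a $K'$-isogeny $\psi:A^2/\Gamma\to A^2$ with $\deg\psi\leq b_0(A^2/K')$. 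Writing $\pi:A^2\to A^2/\Gamma$ for the canonical projection (of degree $\ell^{2\beta}$), the composition $M:=\psi\circ\pi$ is a $K'$-isogeny of $A^2$, represented by a matrix $\bigl(\begin{smallmatrix}a & b\\ c & d\end{smallmatrix}\bigr)\in M_2(R)$ that annihilates $\Gamma$.

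For each $x\in A[\lambda_1]$ the first coordinate of $M(x,\tilde f(x))=0$ reads $ax+b\tilde f(x)=0$ in $A$, and since $ax\in A[\lambda_1]$ and $b\tilde f(x)\in A[\lambda_2]$ lie in direct summands, both must vanish separately; this forces $a\in\lambda_1 R$ and $b\in\lambda_2 R$, and the second row yields analogously $c\in\lambda_1 R$ and $d\in\lambda_2 R$. Hence $\det M=ad-bc$ belongs to $\lambda_1\lambda_2$, so $N_{E/\mathbb{Q}}(\det M)\geq\ell^{2\beta}$; combining the identity $\deg M=N_{E/\mathbb{Q}}(\det M)^2$ (which follows from $T_\ell(A)$ being free of rank $2$ over $\mathcal{O}_\ell$) with $\deg M=\deg\psi\cdot\deg\pi$ gives $\ell^{4\beta}\leq b_0(A^2/K')\cdot\ell^{2\beta}$, hence $\ell^2\leq b_0(A^2/K')\leq M(A/K)^2$, contradicting $\ell>M(A/K)$. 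The main subtlety is the passage from Lemma \ref{surf_lemma_51} to a genuinely $\mathbb{F}_\ell$-linear, Galois-equivariant identification between $A[\lambda_1]$ and $A[\lambda_2]$: one must simultaneously absorb the quadratic twist $\chi$ by a base change and handle the field automorphism $\sigma$ via the coordinate-wise $\mathbb{F}_\ell$-linear action. Once this bijection is in hand, the geometry of the graph subgroup $\Gamma$, the isogeny theorem, and the $\lambda$-adic analysis of the entries of $M$ are a direct reprise of the techniques already used earlier in the section.
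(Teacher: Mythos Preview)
Your argument is correct and follows the same overall architecture as the paper's proof: surjectivity onto each $\operatorname{SL}_2$ factor via Corollary~\ref{surf_cor_TrueSurjOnOneFactor} and the determinant comparison, Goursat's lemma to force $|\mathbb{F}_{\lambda_1}|=|\mathbb{F}_{\lambda_2}|$, then Lemma~\ref{surf_lemma_51} to obtain $(f,\sigma,\chi)$, and finally an isogeny estimate to reach a contradiction.

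The one genuine difference is in the construction of the Galois-stable subgroup $\Gamma$. You place the graph inside $A^2$ (in the style of Lemma~\ref{surf_lemma_AbsoluteIrreducibility}) and analyse the resulting endomorphism as a $2\times 2$ matrix over $R$; this yields $\ell^{2\beta}\le b_0(A^2/K')$. The paper instead exploits the internal decomposition $A[\ell]\cong\prod_{\lambda\mid\ell}A[\lambda]$ and builds $\Gamma$ \emph{inside $A$ itself}, namely $\Gamma=\{(x_\lambda):x_{\lambda_2}=\sigma(fx_{\lambda_1}),\ x_\lambda=0\text{ otherwise}\}$; the composite $\psi\circ\pi$ is then a single element $e\in\mathcal{O}_E$ lying in $\lambda_1\cap\lambda_2$, giving $\ell^{2\beta}\le b_0(A/K')$. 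The paper's route is cleaner and produces a sharper intermediate bound (no passage to $A^2$), but since $M(A/K)^2=b(2[K:\mathbb{Q}],2\dim A,2h(A))\ge b_0(A^2/K')$ your weaker bound still suffices, and the final contradiction goes through. One small slip: $\sigma$ is a field automorphism of $\mathbb{F}_{\ell^\beta}$, not in general an involution; only its $\mathbb{F}_\ell$-linearity is used, so this does not affect the argument.
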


\begin{proof}
Let $S$ be the image of $G_\ell$ in $G_{\lambda_1} \times G_{\lambda_2}$. For the sake of simplicity write $S_i=\operatorname{SL}_2(\mathbb{F}_{\lambda_i})$ for $i=1,2$ and set $S^{1}=S \cap \left( S_1 \times S_2 \right)$; we claim that $S^1$ projects surjectively onto $S_1$ and $S_2$. To see this, notice first that we know from corollary \ref{surf_cor_TrueSurjOnOneFactor} that for every $g_1 \in \operatorname{SL}_2(\mathbb{F}_{\lambda_1})$ there exists in $S$ an element of the form $(g_1,g_2)$, for some $g_2 \in G_{\lambda_2}$. On the other hand, we know from lemma \ref{surf_lemma_DescriptionDeterminant} that $\det(g_2)=\det(g_1)=1$, so $(g_1,g_2)$ actually belongs to $S^1$, which therefore projects surjectively onto $\operatorname{SL}_2(\mathbb{F}_{\lambda_1})$. By the same argument,  $S^1 \to \operatorname{SL}_2(\mathbb{F}_{\lambda_2})$ is onto as well.

The statement of the lemma is equivalent to $S^1 =S_1 \times S_2$. Suppose by contradiction that this is not the case: then by Goursat's lemma there exist normal subgroups $N_1, N_2$ (of $S_1, S_2$ respectively) and an isomorphism $\varphi: S_1/N_1 \to S_2/N_2$ such that $S^1$ projects to the graph of $\varphi$ in $S_1/N_1 \times S_2/N_2$. Comparing the orders of $S_1/N_1$ and $S_2/N_2$ (or, more precisely, their valuations at $\ell$) easily gives $\mathbb{F}_{\lambda_1}\cong \mathbb{F}_{\lambda_2}$. We now go back to working directly with the group $S$. By lemma \ref{surf_lemma_51}, which is applicable since the characteristic of $\mathbb{F}_{\lambda_1} \cong \mathbb{F}_{\lambda_2}$ is $\ell \geq 5$, there exist an isomorphism $f:\mathbb{F}_{\lambda_1}^2 \to \mathbb{F}_{\lambda_2}^2$, a character $\chi:S \to \left\{\pm 1\right\}$, and an automorphism $\sigma$ of $\mathbb{F}_{\lambda_1}=\mathbb{F}_{\lambda_2}$ such that $g_2=\chi((g_1,g_2)) \sigma \left(f g_1 f^{-1}\right)$ for all $(g_1,g_2)$ in $S$. Assume first that $\chi$ is the trivial character: then the subspace
\[
\Gamma:=\left\{ (x_\lambda) \in \prod_{\lambda | \ell} \mathbb{F}_\lambda^2  \cong \prod_{\lambda | \ell} A[\lambda] \bigm\vert x_{\lambda_2}=\sigma(f x_{\lambda_1}), \; x_\lambda=0 \text{ for } \lambda \neq \lambda_1,\lambda_2 \right\}
\]
is Galois invariant, so the abelian variety $A^*:=A/\Gamma$ is defined over $K$. Let $\pi: A \to A^*$ be the canonical projection and $\psi:A^* \to A$ be an isogeny of degree at most $b_0(A/K)$. As in the previous lemma, we see that since $e:=\psi \circ \pi \in \mathcal{O}_E$ kills $\Gamma$ we must have $e \in \lambda_1$ and $e \in \lambda_2$. Thus if $\beta$ denotes the common degree $[\mathbb{F}_{\lambda_1}:\mathbb{F}_\ell]=[\mathbb{F}_{\lambda_2}:\mathbb{F}_\ell]$ we have
\[
\ell^{4\beta}=N_{E/\mathbb{Q}}\left( \lambda_1 \lambda_2 \right)^2 \leq \deg e = \deg \left( \psi \circ \pi \right) = \ell^{2\beta} \deg \psi \leq \ell^{2\beta} b_0(A/K),
\]
whence $\ell \leq b_0(A/K)^{1/2}< M(A/K)$, contradiction. If, on the other hand, $\chi$ is not the trivial character, then the kernel of $\abGal{K} \to G_\ell \to S \xrightarrow{\chi} \left\{\pm 1 \right\}$ defines a quadratic extension $K'$ of $K$, and repeating the same argument over $K'$ we find $\ell \leq b_0(A/K;2)^{1/2}<M(A/K)$, which is again a contradiction.
\end{proof}

\smallskip

We are now ready to prove theorem \ref{surf_thm_GL2}, whose statement we reproduce here for the reader's convenience:

\begin{theorem}{(Theorem \ref{surf_thm_GL2})}
Let $A/K$ be an abelian variety of dimension $g$. Suppose that $\operatorname{End}_{\overline{K}}(A)$ is an order in a totally real field $E$ of degree $g$ over $\mathbb{Q}$ (that is, $A$ is of $\operatorname{GL}_2$-type), and that all the endomorphisms of $A$ are defined over $K$. Let $\ell$ be a prime unramified both in $K$ and in $E$ and strictly larger than 
$
\max\left\{b(A/K)^{g}, \; b(2[K:\mathbb{Q}],2\dim(A),2h(A))^{1/2} \right\}
$: then we have
\[
G_{\ell^\infty}=\left\{x \in \operatorname{GL}_2\left(\mathcal{O}_E \otimes \mathbb{Z}_\ell \right) \bigm\vert \operatorname{det}_{\mathcal{O}_E} x \in \mathbb{Z}_\ell^\times \right\}.
\]
\end{theorem}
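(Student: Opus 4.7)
The heavy lifting is already done in the preceding lemmas; the task is to assemble them. The hypothesis $\ell > b(A/K)^g$ combined with Proposition \ref{surf_prop_EveryPrimeIsGood} ensures that $\ell$ does not divide $[\mathcal{O}_E:R]$, so $\ell$ is good for $A$: the Tate module $T_\ell(A)$ is free of rank $2$ over $\mathcal{O}_\ell \cong \prod_{\lambda \mid \ell} \mathcal{O}_\lambda$, and $G_{\ell^\infty}$ naturally embeds in $\prod_{\lambda \mid \ell} \operatorname{GL}_2(\mathcal{O}_\lambda)$. By Lemma \ref{surf_lemma_DescriptionDeterminant} each $\det\rho_{\lambda^\infty}$ coincides with the cyclotomic character $\chi_\ell$, with values in $\mathbb{Z}_\ell^\times$ (diagonally embedded in $\prod_\lambda \mathcal{O}_\lambda^\times$), so $G_{\ell^\infty}$ is automatically contained in the group $H_{\ell^\infty}$ of the statement. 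Only the reverse inclusion needs to be proved.

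The second hypothesis $\ell > M(A/K)$ makes Lemma \ref{surf_lemma_SurjOnTwo} applicable: for any two places $\lambda_1, \lambda_2$ of $E$ above $\ell$, the image of $\rho_{\lambda_1}\times\rho_{\lambda_2}$ contains $\operatorname{SL}_2(\mathbb{F}_{\lambda_1})\times\operatorname{SL}_2(\mathbb{F}_{\lambda_2})$. Since $\ell \geq 5$, the factors $\operatorname{SL}_2(\mathbb{F}_\lambda)$ are perfect and in particular admit no nontrivial abelian quotient, so Ribet's product criterion (Lemma \ref{surf_Ribet_ProductsOfTwo}), applied to the image of $G_\ell$ in $\prod_{\lambda \mid \ell} \operatorname{SL}_2(\mathbb{F}_\lambda)$, upgrades pairwise surjectivity to surjectivity of the full product map, yielding $\prod_{\lambda \mid \ell} \operatorname{SL}_2(\mathbb{F}_\lambda) \subseteq G_\ell$.

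Having controlled the residual representation, I would then invoke the lifting result of Proposition \ref{surf_prop_LiftingLemma}: because $\ell$ is unramified in $E$ and $\ell \geq 5$, the fact that the mod-$\ell$ image of $G_{\ell^\infty}$ contains $\prod_\lambda \operatorname{SL}_2(\mathbb{F}_\lambda)$ forces $G_{\ell^\infty}' = \prod_{\lambda \mid \ell} \operatorname{SL}_2(\mathcal{O}_\lambda)$, and in particular this product lies inside $G_{\ell^\infty}$. On the other hand, Lemma \ref{surf_lemma_detIsSurjective} (applicable because $\ell$ is unramified in $K$) says $\det_{\mathcal{O}_\ell}\colon G_{\ell^\infty}\to \mathbb{Z}_\ell^\times$ is surjective. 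Given any $h \in H_{\ell^\infty}$, pick $g \in G_{\ell^\infty}$ with $\det g = \det h$; then $hg^{-1}$ lies in the kernel of $\det_{\mathcal{O}_\ell}$ restricted to $H_{\ell^\infty}$, which is precisely $\prod_\lambda \operatorname{SL}_2(\mathcal{O}_\lambda) \subseteq G_{\ell^\infty}$, whence $h \in G_{\ell^\infty}$, proving the desired equality.

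Since the genuinely delicate content is all in Lemmas \ref{surf_lemma_DimensionOne}--\ref{surf_lemma_SurjOnTwo}, the remaining obstacle is purely a bookkeeping check that the bound in the statement is large enough to power all the inputs above; this is immediate, as $\max\{b(A/K)^g,\, M(A/K)\}$ majorizes each of the thresholds requested by Proposition \ref{surf_prop_EveryPrimeIsGood}, by Lemma \ref{surf_lemma_SurjOnTwo}, and by the quasi-simplicity requirement $\ell \geq 5$.
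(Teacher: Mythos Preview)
Your proof is correct and follows essentially the same route as the paper's: goodness of $\ell$, pairwise surjectivity from Lemma \ref{surf_lemma_SurjOnTwo}, Ribet's product criterion, lifting via Proposition \ref{surf_prop_LiftingLemma}, and the determinant argument. The only imprecision is the phrase ``the image of $G_\ell$ in $\prod_{\lambda\mid\ell}\operatorname{SL}_2(\mathbb{F}_\lambda)$'': what you need is the \emph{intersection} $S=G_\ell\cap\prod_{\lambda}\operatorname{SL}_2(\mathbb{F}_\lambda)$, and the fact that $S$ still surjects onto each pair $\operatorname{SL}_2(\mathbb{F}_{\lambda_i})\times\operatorname{SL}_2(\mathbb{F}_{\lambda_j})$ uses precisely the determinant equality $G_\ell\subseteq H_\ell$ you recorded earlier (this is the role of Lemma \ref{surf_lemma_GlHl} in the paper).
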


\begin{proof}
Notice first that $\ell$ is a good prime by proposition \ref{surf_prop_EveryPrimeIsGood}. By lemma \ref{surf_lemma_SurjOnTwo}, the inequality $\ell>M(A/K)$ guarantees that for every pair of places $\lambda_1,\lambda_2$ of $E$ lying above $\ell$ the image of
\[
\abGal{K} \xrightarrow{\rho_{\lambda_1} \times \rho_{\lambda_2}} \operatorname{GL}_2(\mathbb{F}_{\lambda_1}) \times \operatorname{GL}_2(\mathbb{F}_{\lambda_2})
\]
contains $\operatorname{SL}_2(\mathbb{F}_{\lambda_1}) \times \operatorname{SL}_2(\mathbb{F}_{\lambda_2})$. Let $S:=G_\ell \cap \prod_{\lambda | \ell} \operatorname{SL}_2(\mathbb{F}_\lambda)$. By lemma \ref{surf_lemma_GlHl} and what we just remarked we see that for every pair $\lambda_1, \lambda_2$ of places of $E$ dividing $\ell$ the group $S$ projects surjectively onto $\operatorname{SL}_2(\mathbb{F}_{\lambda_1}) \times \operatorname{SL}_2(\mathbb{F}_{\lambda_2})$.
Clearly we have $\ell \geq 5$, so a group of the form $\operatorname{SL}_2(\mathbb{F}_{\lambda})$ has no nontrivial abelian quotients: lemma \ref{surf_Ribet_ProductsOfTwo} then implies $S=\prod_{\lambda |\ell} \operatorname{SL}_2(\mathbb{F}_\lambda)$, and by proposition \ref{surf_prop_LiftingLemma} the group $G_{\ell^\infty}$ contains $\operatorname{SL}_2(\mathcal{O}_E \otimes \mathbb{Z}_\ell)$. Since furthermore the map $\det_{\mathcal{O}_E \otimes \mathbb{Z}_\ell} : G_{\ell^\infty} \to \mathbb{Z}_\ell^\times$ is surjective by lemma \ref{surf_lemma_detIsSurjective} we conclude that $G_{\ell^\infty}$ contains $\left\{x \in \operatorname{GL}_2\left(\mathcal{O}_E \otimes \mathbb{Z}_\ell \right) \bigm\vert \operatorname{det}_{\mathcal{O}_E} x \in \mathbb{Z}_\ell^\times \right\}$. The opposite inclusion is proved as in lemma \ref{surf_lemma_GlHl}, so these two groups are equal as claimed.
\end{proof}

\begin{remark}
It is not hard to show that when $g$ is large enough we have $M(A/K) < b(A/K)^{g}$; in fact, $g \geq 33$ suffices.
\end{remark}

The case of abelian surfaces follows at once:

\begin{corollary}{(Corollary \ref{surf_cor_MainRM})}
Suppose that $R=\operatorname{End}_{\overline{K}}(A)$ is an order in a real quadratic field $E$ and that all the endomorphisms of $A$ are defined over $K$. Let $\ell$ be a prime unramified both in $K$ and in $E$ and strictly larger than $b(2[K:\mathbb{Q}],4,2h(A))^{1/2}$: then we have
\[
G_{\ell^\infty}=\left\{x \in \operatorname{GL}_2\left(\mathcal{O}_E \otimes \mathbb{Z}_\ell \right) \bigm\vert \operatorname{det}_{\mathcal{O}_E} x \in \mathbb{Z}_\ell^\times \right\}.
\]
\end{corollary}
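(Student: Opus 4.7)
The plan is to derive the corollary as a direct specialization of Theorem \ref{surf_thm_GL2} to the case $g = \dim A = 2$. Under the hypotheses of the corollary, $\operatorname{End}_{\overline{K}}(A) \otimes \mathbb{Q} = E$ is a totally real number field with $[E:\mathbb{Q}] = 2 = \dim A$, so $A$ is of $\operatorname{GL}_2$-type in the sense of Definition \ref{surf_def_GL2}. All endomorphisms of $A$ are assumed to be $K$-rational, and $\ell$ is assumed to be unramified in $K$ and in $E$, so every hypothesis of Theorem \ref{surf_thm_GL2} is in place except possibly the lower bound on $\ell$. The only verification I expect to carry out is therefore that the single bound $\ell > b(2[K:\mathbb{Q}], 4, 2h(A))^{1/2}$ assumed in the corollary already implies the two-pronged bound $\ell > \max\bigl\{b(A/K)^{2},\, b(2[K:\mathbb{Q}], 4, 2h(A))^{1/2}\bigr\}$ that Theorem \ref{surf_thm_GL2} needs.

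To do this I would unwind Definition \ref{surf_def_bFunction}. With $\alpha(g) = 2^{10} g^3$, one has $\alpha(2) = 2^{13}$ and $\alpha(4) = 2^{16}$, so $b(A/K)^{2} = b([K:\mathbb{Q}], 2, h(A))^{2}$ carries a total outer exponent of $2 \alpha(2) = 2^{14}$, whereas $b(2[K:\mathbb{Q}], 4, 2h(A))^{1/2}$ carries outer exponent $\alpha(4)/2 = 2^{15}$. The inner base $(14g)^{64g^{2}} d \max(h,\log d,1)^{2}$ is moreover pointwise larger at $(2[K:\mathbb{Q}],4,2h(A))$ than at $([K:\mathbb{Q}],2,h(A))$, so the inequality $b(A/K)^{2} \leq b(2[K:\mathbb{Q}], 4, 2h(A))^{1/2}$ is immediate; the maximum appearing in the statement of Theorem \ref{surf_thm_GL2} is thus achieved by its second argument, and invoking the theorem gives the desired description of $G_{\ell^{\infty}}$.

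I do not foresee any substantive obstacle: the corollary is genuinely a specialization of the main theorem, and the only non-automatic step is the short numerical comparison of the two instances of $b$, which is routine once the exponents $\alpha(2)$ and $\alpha(4)$ are computed.
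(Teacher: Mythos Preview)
Your proposal is correct and matches the paper's own proof exactly: the paper also derives the corollary immediately from Theorem~\ref{surf_thm_GL2} together with the ``(easy) inequality $M(A/K) > b(A/K)^{2}$'', which is precisely the numerical comparison you carry out.
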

\begin{proof}
Immediate from the previous theorem and the (easy) inequality $M(A/K) > b(A/K)^{2}$.
\end{proof}

\section{Type II -- Quaternionic multiplication}\label{surf_sect_QM}
In this section we establish the surjectivity result when $R=\operatorname{End}_{\overline{K}}(A)$ is an order in an indefinite (division) quaternion algebra $D$ over $\mathbb{Q}$, and the action of $R$ is defined over $K$. We let $\Delta$ be the discriminant of $R$, and we fix a maximal order $\mathcal{O}_D$ of $D$ which contains $R$. For the whole section we only consider primes $\ell$ strictly larger than $b(2[K:\mathbb{Q}],4,2h(A))^{1/2}$.

\begin{remark}\label{surf_rmk_Index}
We know from proposition \ref{surf_prop_EndomorphismRingIndex} that the index $[\mathcal{O}_D:R]$ does not exceed $b(A/K)^4$. It is easy to check that $b(A/K)^4<b(2[K:\mathbb{Q}],4,2h(A))^{1/2}$, and thus for all the primes $\ell$ we consider we have $\ell \nmid [\mathcal{O}_D:R]$.
\end{remark}

We start by recalling a result from \cite{MR2290584}, which we state only in the special case we need (abelian surfaces with quaternionic multiplication): 

\begin{theorem}{(\cite[Theorem 5.4]{MR2290584})}\label{surf_thm_ReducedWeilPairing}
Let $\mathcal{O}_D$ be a maximal order of $\operatorname{End}_{\overline{K}}(A)$ containing $R$. Let $\ell$ be a prime dividing neither $\Delta$ nor the index $[\mathcal{O}_D:R]$. Suppose that $\ell$ does not divide the degree of a fixed $K$-polarization of $A$. There exists a $\abGal{K}$-equivariant isomorphism
\[
T_\ell(A) \cong W_{\ell^\infty} \oplus W_{\ell^\infty},
\]
where $W_{\ell^\infty}$ is a simple $\abGal{K}$-module, free of rank 2 over $\mathbb{Z}_\ell$, equipped with a nondegenerate, $\abGal{K}$-equivariant bilinear form
\[\langle \cdot, \cdot \rangle_{QM} : W_{\ell^\infty} \times W_{\ell^\infty} \to \mathbb{Z}_\ell(1).\]
\end{theorem}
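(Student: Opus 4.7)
The plan is to exploit the fact that the quaternion algebra $D$ splits at $\ell$ and combine Morita equivalence with Faltings' theorem.

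First I would observe that the hypothesis $\ell\nmid\Delta$ makes $D\otimes\mathbb{Q}_\ell$ split, hence isomorphic to $M_2(\mathbb{Q}_\ell)$, while the hypothesis $\ell\nmid[\mathcal{O}_D:R]$ identifies $R\otimes\mathbb{Z}_\ell$ with $\mathcal{O}_D\otimes\mathbb{Z}_\ell$, which is a maximal order of $M_2(\mathbb{Q}_\ell)$ and hence, up to conjugation, isomorphic to $M_2(\mathbb{Z}_\ell)$. Because the $R$-action on $A$ is $K$-rational, the resulting action of $M_2(\mathbb{Z}_\ell)$ on $T_\ell(A)$ commutes with $\abGal{K}$. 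I would then pick a primitive idempotent $e=e_{11}\in M_2(\mathbb{Z}_\ell)$ and set $W_{\ell^\infty}:=eT_\ell(A)$: the decomposition $1=e_{11}+e_{22}$ produces a Galois-equivariant splitting
\[
T_\ell(A) \;=\; e_{11}T_\ell(A)\oplus e_{22}T_\ell(A),
\]
while the matrix units $e_{12},e_{21}$ yield a Galois-equivariant isomorphism between the two summands. Counting $\mathbb{Z}_\ell$-ranks then gives the required $\abGal{K}$-equivariant isomorphism $T_\ell(A)\cong W_{\ell^\infty}\oplus W_{\ell^\infty}$ with $W_{\ell^\infty}$ free of rank $2$ over $\mathbb{Z}_\ell$.

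For the simplicity of $W_{\ell^\infty}$ as a $\abGal{K}$-module I would appeal to Faltings' theorem: since $A$ is absolutely simple, $V_\ell(A)$ is a semisimple Galois representation whose endomorphism algebra equals $\operatorname{End}_K(A)\otimes\mathbb{Q}_\ell\cong M_2(\mathbb{Q}_\ell)$. The only way this can arise is if $V_\ell(A)\cong V^{\oplus 2}$ for some absolutely irreducible $\abGal{K}$-module $V$ with $\operatorname{End}_{\abGal{K}}(V)=\mathbb{Q}_\ell$; matching this with the Morita decomposition identifies $V$ with $W_{\ell^\infty}\otimes\mathbb{Q}_\ell$, so $W_{\ell^\infty}$ is simple as claimed.

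For the pairing, I would exploit the compatibility of the Weil pairing $\langle\cdot,\cdot\rangle$ with the quaternion action through the Rosati involution $\dagger$, namely $\langle rv,w\rangle=\langle v,r^\dagger w\rangle$ for every $r\in R\otimes\mathbb{Z}_\ell$. Since $D$ is indefinite, the Rosati involution attached to the fixed $K$-polarization can be written as $r\mapsto\mu^{-1}\bar r\mu$ for some pure quaternion $\mu\in D$, where $\bar{\cdot}$ is the canonical involution. Under the Morita-type identification $V_\ell(A)\cong(W_{\ell^\infty}\otimes\mathbb{Q}_\ell)\otimes_{\mathbb{Q}_\ell}\mathbb{Q}_\ell^2$ (with $M_2(\mathbb{Q}_\ell)$ acting on the second factor only), the $R$-equivariance of $\langle\cdot,\cdot\rangle$ forces it to factor as a tensor product $\langle\cdot,\cdot\rangle_{QM}\otimes\omega_0$, where $\omega_0$ is an alternating form on $\mathbb{Q}_\ell^2$ essentially determined by $\mu$, and $\langle\cdot,\cdot\rangle_{QM}$ is a Galois-equivariant bilinear form on $W_{\ell^\infty}\otimes\mathbb{Q}_\ell$ with values in $\mathbb{Q}_\ell(1)$. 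Equivalently, one can define
\[
\langle v,w\rangle_{QM} \;:=\; \langle v,\eta w\rangle \quad \text{for } v,w\in W_{\ell^\infty},
\]
where $\eta\in R\otimes\mathbb{Z}_\ell$ is a carefully chosen element inducing a Galois-equivariant isomorphism $W_{\ell^\infty}\xrightarrow{\sim}e_{22}T_\ell(A)$. Galois-equivariance is immediate from the fact that $\eta$ commutes with $\abGal{K}$, nondegeneracy follows from that of the Weil pairing on $T_\ell(A)$, and the hypothesis that $\ell$ does not divide the polarization degree ensures perfectness of the Weil pairing, hence integrality of the values in $\mathbb{Z}_\ell(1)$.

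The hardest step will be the last one: producing $\eta$ (equivalently, the tensor decomposition of the Weil pairing) and verifying that $\langle\cdot,\cdot\rangle_{QM}$ is simultaneously Galois-equivariant, integral, and nondegenerate. This requires tracking carefully the interaction of the Rosati involution with the integral structure on $R\otimes\mathbb{Z}_\ell$, and the appropriate choice of $\eta$ depends on both the polarization and the specific splitting of $D$ at $\ell$; the other steps are essentially mechanical applications of Morita equivalence and of the consequences of Faltings' theorem.
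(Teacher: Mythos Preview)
The paper does not prove this statement at all: it is quoted verbatim as \cite[Theorem 5.4]{MR2290584} and used as a black box. So there is no ``paper's own proof'' to compare against; your sketch is an attempt to reconstruct the argument of the cited reference rather than of the present paper.

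That said, your outline is broadly the standard one and is essentially correct. The Morita decomposition via the idempotents $e_{11},e_{22}$ and the matrix units $e_{12},e_{21}$ is exactly how one obtains the Galois-equivariant splitting $T_\ell(A)\cong W_{\ell^\infty}\oplus W_{\ell^\infty}$, and your appeal to Faltings for simplicity of $W_{\ell^\infty}\otimes\mathbb{Q}_\ell$ is also the right move. One small caveat on the pairing step: your formula $\langle v,w\rangle_{QM}:=\langle v,\eta w\rangle$ with $\eta$ sending $e_{11}T_\ell(A)$ isomorphically to $e_{22}T_\ell(A)$ only yields a nondegenerate form if the Weil pairing restricts trivially to $e_{11}T_\ell(A)\times e_{11}T_\ell(A)$ and nondegenerately to $e_{11}T_\ell(A)\times e_{22}T_\ell(A)$. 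This is not automatic for an arbitrary choice of idempotents; it requires choosing the Morita splitting compatibly with the Rosati involution (equivalently, diagonalising the symplectic form on $\mathbb{Q}_\ell^2$ determined by $\mu$). You gesture at this with the tensor factorisation $\langle\cdot,\cdot\rangle=\langle\cdot,\cdot\rangle_{QM}\otimes\omega_0$, which is indeed the clean way to phrase it, but the two descriptions you give are only equivalent once the idempotents have been so adapted. With that adjustment the argument goes through.
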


\noindent\textbf{Notation.} We write $W_\ell$ for $W_{\ell^\infty}/\ell W_{\ell^\infty}$. It is a $\abGal{K}$-module, free of rank 2 over $\mathbb{F}_\ell$.

\medskip

Theorem \ref{surf_thm_ReducedWeilPairing} says in particular that $T_\ell A$ decomposes as the direct sum of two isomorphic $2$-dimensional representations. Thus choosing bases for $W_{\ell^\infty}$ and $W_\ell$ we have:

\begin{lemma}
If $\ell$ does not divide $\Delta$ nor $[\mathcal{O}_D:R]$, then $G_\ell$ can be identified with a subgroup of $\operatorname{GL}_2(\mathbb{F}_\ell)$ acting on $A[\ell] \cong \mathbb{F}_\ell^4 \cong \operatorname{M}_2(\mathbb{F}_\ell)$ on the right. Similarly, $G_{\ell^\infty}$ can be identified with a subgroup of $\operatorname{GL}_2(\mathbb{Z}_\ell)$ acting on $T_\ell(A) \cong \mathbb{Z}_\ell^4 \cong \operatorname{M}_2(\mathbb{Z}_\ell)$ on the right.
\end{lemma}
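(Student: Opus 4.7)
The plan is to exploit Morita equivalence for the $R$-module structure on $T_\ell(A)$. First I would record that the hypothesis $\ell\nmid\Delta$ means $D\otimes\mathbb{Q}_\ell$ is split, so $D\otimes\mathbb{Q}_\ell\cong M_2(\mathbb{Q}_\ell)$ and any maximal order therein is conjugate to $M_2(\mathbb{Z}_\ell)$. Combined with $\ell\nmid[\mathcal{O}_D:R]$ (which, as pointed out in Remark~\ref{surf_rmk_Index}, is automatic under the standing assumption $\ell>b(2[K:\mathbb{Q}],4,2h(A))^{1/2}$), this gives $R\otimes\mathbb{Z}_\ell=\mathcal{O}_D\otimes\mathbb{Z}_\ell\cong M_2(\mathbb{Z}_\ell)$.

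Next I would use Morita equivalence for $M_2(\mathbb{Z}_\ell)$: any left $M_2(\mathbb{Z}_\ell)$-module which is free of rank $4=2\dim A$ over $\mathbb{Z}_\ell$ is isomorphic to $M_2(\mathbb{Z}_\ell)$ itself as a left $M_2(\mathbb{Z}_\ell)$-module. Applied to $T_\ell(A)$, which inherits a left $M_2(\mathbb{Z}_\ell)$-action via the $R$-action, this produces an isomorphism $T_\ell(A)\xrightarrow{\sim}M_2(\mathbb{Z}_\ell)$ of left $M_2(\mathbb{Z}_\ell)$-modules. Concretely, the two columns correspond exactly to the two copies of $W_{\ell^\infty}$ furnished by Theorem~\ref{surf_thm_ReducedWeilPairing} (obtained by cutting with a pair of orthogonal primitive idempotents of $M_2(\mathbb{Z}_\ell)$), so this splitting is compatible with the picture already set up in the paper. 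Reduction modulo $\ell$ gives an analogous identification $A[\ell]\cong M_2(\mathbb{F}_\ell)$.

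Under such an identification, the commutant of left multiplication by $M_2(\mathbb{Z}_\ell)$ on itself is precisely right multiplication by $M_2(\mathbb{Z}_\ell)$: if $\varphi\in\operatorname{End}_{\mathbb{Z}_\ell}(M_2(\mathbb{Z}_\ell))$ commutes with every left multiplication, then $\varphi(m)=\varphi(m\cdot1)=m\cdot\varphi(1)$, so $\varphi$ is right multiplication by $\varphi(1)$. Since by hypothesis every endomorphism of $A$ is defined over $K$, the Galois action on $T_\ell(A)$ is $R\otimes\mathbb{Z}_\ell$-linear, hence lies in this commutant; and because Galois acts by automorphisms, it must land in the unit group, i.e. in $\operatorname{GL}_2(\mathbb{Z}_\ell)$ acting on $M_2(\mathbb{Z}_\ell)\cong T_\ell(A)$ on the right. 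Reducing modulo $\ell$ yields the corresponding statement for $G_\ell$.

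No step of this argument is genuinely difficult; the whole statement is really a formal shadow of Morita equivalence combined with the $R$-equivariance of the Galois action. The only point deserving attention is the bookkeeping that ensures both $\ell\nmid\Delta$ and $\ell\nmid[\mathcal{O}_D:R]$ hold simultaneously, which is precisely the purpose of Remark~\ref{surf_rmk_Index}.
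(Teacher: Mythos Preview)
Your proof is correct but takes a somewhat different route from the paper's. The paper deduces the lemma in one line from the cited Theorem~\ref{surf_thm_ReducedWeilPairing}, which already furnishes the Galois-equivariant splitting $T_\ell(A)\cong W_{\ell^\infty}\oplus W_{\ell^\infty}$; choosing a $\mathbb{Z}_\ell$-basis of $W_{\ell^\infty}$ then identifies $G_{\ell^\infty}$ with a subgroup of $\operatorname{GL}_2(\mathbb{Z}_\ell)$, and the identification of the $R\otimes\mathbb{Z}_\ell$-action with left matrix multiplication is postponed to the next lemma, where it is obtained via a Zariski-density argument invoking an open image theorem. You instead start from the $R\otimes\mathbb{Z}_\ell\cong M_2(\mathbb{Z}_\ell)$-module structure on $T_\ell(A)$ and use Morita equivalence together with the elementary commutant computation $\operatorname{End}_{M_2(\mathbb{Z}_\ell)}(M_2(\mathbb{Z}_\ell))=M_2(\mathbb{Z}_\ell)^{\mathrm{op}}$. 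This buys you self-containment---no need to quote either Theorem~\ref{surf_thm_ReducedWeilPairing} or the open image result---and simultaneously proves the subsequent lemma (that $R\otimes\mathbb{Z}_\ell$ acts by left multiplication) for free. The paper's approach, on the other hand, makes the bilinear form $\langle\cdot,\cdot\rangle_{QM}$ on $W_{\ell^\infty}$ available immediately, which is needed later for the determinant computation.
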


From now on we use the description of $G_\ell$ given by the previous lemma, namely we consider it as a subgroup of $\operatorname{GL}_2(\mathbb{F}_\ell)$ acting on $\mathbb{F}_\ell^4$ as two copies of the standard representation. 

\begin{lemma}
Let $\ell$ be a prime which divides neither $\Delta$ nor $[\mathcal{O}_D:R]$. Under the above identification $T_\ell(A) \cong \operatorname{M}_2(\mathbb{Z}_\ell)$, the action of $R \otimes \mathbb{Z}_\ell \cong \operatorname{M}_2(\mathbb{Z}_\ell)$ on $T_\ell(A)$ is the natural multiplication of matrices (with $R \otimes \mathbb{Z}_\ell$ acting on the left).
\end{lemma}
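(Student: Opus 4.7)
The plan is to combine local splitting of the quaternion algebra $D$ at $\ell$ with a Morita/idempotent argument, and then match the resulting structural isomorphism with the $W_{\ell^\infty} \oplus W_{\ell^\infty}$ decomposition chosen just before the lemma. The starting point is a purely algebraic observation: since $\ell \nmid \Delta$, the algebra $D$ splits at $\ell$, that is $D \otimes_{\mathbb{Q}} \mathbb{Q}_\ell \cong M_2(\mathbb{Q}_\ell)$; and since $\ell \nmid [\mathcal{O}_D : R]$ (see Remark \ref{surf_rmk_Index}), the natural map $R \otimes \mathbb{Z}_\ell \hookrightarrow \mathcal{O}_D \otimes \mathbb{Z}_\ell$ is an isomorphism. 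The right-hand side is a maximal order in $M_2(\mathbb{Q}_\ell)$, and every maximal order in $M_2(\mathbb{Q}_\ell)$ is conjugate to $M_2(\mathbb{Z}_\ell)$; fixing such a conjugation yields an explicit identification $R \otimes \mathbb{Z}_\ell \cong M_2(\mathbb{Z}_\ell)$, which is the one used implicitly in the statement of the lemma.

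Next I would invoke the structure theory of modules over a matrix algebra. Let $e_1 = \operatorname{diag}(1,0)$ and $e_2 = \operatorname{diag}(0,1)$ be the standard orthogonal idempotents of $M_2(\mathbb{Z}_\ell)$, summing to the identity. For any left $M_2(\mathbb{Z}_\ell)$-module $N$ one has a canonical decomposition $N = e_1 N \oplus e_2 N$, and each summand $e_i N$ carries a natural structure of left module over $e_i M_2(\mathbb{Z}_\ell) e_i \cong \mathbb{Z}_\ell$, on which $M_2(\mathbb{Z}_\ell)$ acts through the standard representation. In particular, applied to $N = T_\ell(A)$, which is free of rank $4$ over $\mathbb{Z}_\ell$, this produces two free $\mathbb{Z}_\ell$-modules $e_1 T_\ell(A)$ and $e_2 T_\ell(A)$, necessarily of rank $2$, and an isomorphism of left $M_2(\mathbb{Z}_\ell)$-modules $T_\ell(A) \cong e_1 T_\ell(A) \oplus e_2 T_\ell(A) \cong M_2(\mathbb{Z}_\ell)$, where the right-hand side carries its tautological action by left matrix multiplication.

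It then remains to verify that the isomorphism just produced coincides, up to change of basis in $W_{\ell^\infty}$, with the one fixed in the discussion preceding the lemma. Because all endomorphisms of $A$ are defined over $K$, the $R$-action commutes with $\abGal{K}$, so $e_1 T_\ell(A)$ and $e_2 T_\ell(A)$ are $\abGal{K}$-stable free $\mathbb{Z}_\ell$-submodules of rank $2$; each is therefore a copy of the simple Galois module $W_{\ell^\infty}$ of Theorem \ref{surf_thm_ReducedWeilPairing}. Choosing $\mathbb{Z}_\ell$-bases of $e_1 T_\ell(A)$ and $e_2 T_\ell(A)$ simultaneously gives the identification $T_\ell(A) \cong W_{\ell^\infty} \oplus W_{\ell^\infty} \cong M_2(\mathbb{Z}_\ell)$ used in the statement, and by construction the $R \otimes \mathbb{Z}_\ell$-action on the matrix-algebra model is by left multiplication.

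The only genuinely delicate point in the argument is the compatibility of bases in the last step: one needs to be sure that identifying $T_\ell(A)$ with $M_2(\mathbb{Z}_\ell)$ via the column decomposition attached to $e_1, e_2$ matches the identification via $W_{\ell^\infty} \oplus W_{\ell^\infty}$ obtained from Theorem \ref{surf_thm_ReducedWeilPairing}. This is not a mathematical obstacle but a matter of naming, since one is free to choose bases of $W_{\ell^\infty}$ adapted to the idempotent decomposition. Everything else is either general nonsense for maximal orders in split quaternion algebras or standard Morita theory, so no significant obstacle is expected.
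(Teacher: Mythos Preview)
Your approach is correct and takes a genuinely different route from the paper. The paper's proof does not build the identification via idempotents; instead it invokes Chi's open image theorem \cite[Theorem 7.3]{MR1156568} to conclude that $G_{\ell^\infty}$ is Zariski-dense in $\operatorname{GL}_2(\mathbb{Z}_\ell)$ (acting by right multiplication), observes that every $\varphi \in R \otimes \mathbb{Z}_\ell$ commutes with $G_{\ell^\infty}$ and hence, by density, with all right multiplications, and then uses that the commutant of right multiplication on $\operatorname{M}_2(\mathbb{Z}_\ell)$ is left multiplication. This bicommutant argument is short but rests on a nontrivial open-image input. Your Morita/idempotent argument is more elementary in that it makes no appeal to the size of $G_{\ell^\infty}$: you manufacture the identification $T_\ell(A) \cong \operatorname{M}_2(\mathbb{Z}_\ell)$ directly from the $R \otimes \mathbb{Z}_\ell$-module structure, so that left multiplication is automatic, and then reconcile with Chi's decomposition. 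The reconciliation step you flag is indeed the only subtle point; it is resolved by taking $e_1 T_\ell(A)$ as the concrete model for $W_{\ell^\infty}$ (it is rank $2$, Galois-stable, and irreducible after tensoring with $\mathbb{Q}_\ell$ since $V_\ell(A)$ has only one $2$-dimensional irreducible constituent), so that ``the above identification'' can be chosen to be yours. Your route has the pleasant feature of not presupposing an open-image statement in the course of a paper whose goal is to prove one effectively.
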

\begin{proof}
Notice first that $R \otimes \mathbb{Z}_\ell \cong \operatorname{M}_2(\mathbb{Z}_\ell)$ since $\ell \nmid \Delta$. Next by \cite[Theorem 7.3]{MR1156568} we know that $G_{\ell^\infty}$ is open in $\operatorname{GL}_2(\mathbb{Z}_\ell)$; in particular, $G_{\ell^\infty}$ is Zariski-dense in $\operatorname{M}_2(\mathbb{Z}_\ell)$. Let $\varphi$ be any element of $R \otimes \mathbb{Z}_\ell$: then $G_{\ell^\infty}$ commutes with $\varphi$, and since the property of commuting with a fixed element is Zariski-closed we see that $\varphi$ commutes with \textit{any} element of $\operatorname{M}_2(\mathbb{Z}_\ell)$ (acting on $T_\ell(A) \cong \operatorname{M}_2(\mathbb{Z}_\ell)$ by right multiplication). Since this holds for every $\varphi \in R \otimes \mathbb{Z}_\ell$, it follows that the left action of $R \otimes \mathbb{Z}_\ell$ on $T_\ell(A)$ commutes with the right action of $\operatorname{M}_2(\mathbb{Z}_\ell)$. Since $T_\ell(A)$ is a free one-dimensional right module over $\operatorname{M}_2(\mathbb{Z}_\ell)$, this implies that $\varphi$ acts on $T_\ell(A)$ as \textit{left} multiplication by an element of $\operatorname{M}_2(\mathbb{Z}_\ell)$.
\end{proof}

\begin{lemma}\label{surf_lemma_QMSL2} Suppose $\ell$ divides neither $\Delta$ nor $[\mathcal{O}_D:R]$ and is larger than $b(2[K:\mathbb{Q}],4,2h(A))^{1/2}$. The group $G_\ell$ contains $\operatorname{SL}_2(\mathbb{F}_\ell)$ under the above identification.
\end{lemma}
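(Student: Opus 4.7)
The plan is to imitate the proof of Lemma \ref{surf_lemma_SL2Fl} from the real-multiplication section: apply Dickson's classification (theorem \ref{surf_thm_Dickson}) to $G_\ell \subseteq \operatorname{GL}_2(\mathbb{F}_\ell)$ and rule out every case in which $G_\ell$ fails to contain $\operatorname{SL}_2(\mathbb{F}_\ell)$ by means of isogeny estimates. The basic structural ingredient will be the tensor decomposition $A[\ell] \cong W_\ell \otimes_{\mathbb{F}_\ell} M$, where $W_\ell \cong \mathbb{F}_\ell^2$ is the standard $G_\ell$-module (with trivial action on $M$) and $M \cong \mathbb{F}_\ell^2$ carries the commuting action of $R \otimes \mathbb{F}_\ell \cong \operatorname{M}_2(\mathbb{F}_\ell)$ (with trivial action on $W_\ell$); this tensor structure is what lets me translate information about $W_\ell$ into Galois-stable subspaces of $A[\ell]$ on which the isogeny theorem bites.

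First I will establish two isogeny-type estimates. The \textbf{reducibility estimate} says that, if $W_\ell$ contains a $\abGal{K}$-stable line $L$, then $\ell \leq b(A/K)$: for any nonzero $m_1 \in M$ the subgroup $H := L \otimes m_1 \subseteq A[\ell]$ is Galois-stable of order $\ell$, and the isogeny theorem supplies $\pi : A \to A/H$ of degree $\ell$ together with $\psi : A/H \to A$ of degree $\leq b(A/K)$. The composition $e := \psi \circ \pi \in R$ acts on $A[\ell] = W_\ell \otimes M$ as $\operatorname{id}_{W_\ell} \otimes e|_M$, and the fact that it kills $H$ forces $e|_M(m_1) = 0$, hence $\operatorname{Nrd}(e) \in \ell \mathbb{Z}$; comparing $\deg e = \operatorname{Nrd}(e)^2 \geq \ell^2$ with $\deg e \leq \ell \cdot b(A/K)$ yields the claim.

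The \textbf{commutant estimate} says that, if some $\alpha \in \operatorname{End}_{\mathbb{F}_\ell}(W_\ell) \setminus \mathbb{F}_\ell$ commutes with $\rho_\ell(\abGal{K})$, then $\ell \leq b(A/K)^{1/2}$. For this I will consider the twisted graph $\Gamma := \{ w \otimes m_1 + \alpha(w) \otimes m_2 : w \in W_\ell \}$ for a chosen basis $(m_1,m_2)$ of $M$: commutation of $\alpha$ with $G_\ell$ makes $\Gamma$ Galois-stable of order $\ell^2$, and the isogeny theorem again produces $e = \psi \circ \pi \in R$ killing $\Gamma$. Writing $e|_M$ as a matrix $\bigl(\begin{smallmatrix} a & c \\ b & d \end{smallmatrix}\bigr)$ in the $(m_1,m_2)$-basis, the killing condition unwinds to the two relations $a \cdot \operatorname{id} + c\alpha = 0$ and $b \cdot \operatorname{id} + d\alpha = 0$ in $\operatorname{End}(W_\ell)$; non-scalarity of $\alpha$ forces $e|_M = 0$, i.e.\ $e \in \ell R$, and the bound $\deg e \geq \ell^4$ together with $\deg e \leq \ell^2 b(A/K)$ yields the claim.

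With these in hand I will run through Dickson's list. Cases (1) and (2) produce a Galois-stable line in $W_\ell$, so the reducibility estimate applies directly. Cases (3) and (4) place $G_\ell$ in the normalizer of a (non-split or split) Cartan; passing to a quadratic extension $K'/K$, the image of $\abGal{K'}$ lands in the Cartan itself, which either fixes a line (split case, apply the reducibility estimate over $K'$) or admits a non-scalar commutant $\alpha \in \mathbb{F}_{\ell^2} \setminus \mathbb{F}_\ell$ (non-split case, apply the commutant estimate over $K'$). The exceptional cases (6) and (7) have $|\mathbb{P} G_\ell| \leq 120$, so after an extension $K''/K$ of degree at most $120$ the image $\rho_\ell(\abGal{K''})$ consists of scalars, reducing once more to the reducibility estimate over $K''$. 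In the remaining Dickson cases (5a) and (5b) one has $\beta = 1$ (we are working inside $\operatorname{GL}_2(\mathbb{F}_\ell)$), so the level $\alpha$ must equal $1$ and hence $G_\ell \supseteq \operatorname{SL}_2(\mathbb{F}_\ell)$, which is the desired conclusion. The only routine obstacle is to check that each discarded case produces a bound of the form $b(A/K''';d)^{1/e}$ strictly below $b(2[K:\mathbb{Q}],4,2h(A))^{1/2}$; given the rapid growth $\alpha(g)=2^{10}g^3$ of the exponent defining $b$, the exponent $2^{15}$ on the right dominates the exponents $\leq 2^{13}$ arising on the left, so all the comparisons go through uniformly.
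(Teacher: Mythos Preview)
Your argument is correct modulo one small slip (below), and follows the paper's overall strategy of Dickson plus isogeny bounds, but your commutant estimate is genuinely different and sharper. The paper handles the non-scalar commutative case (after a quadratic extension $K'$) by choosing a non-scalar $\alpha \in G_\ell$ and forming the graph $\{(x_1,y_1,x_2,y_2):x_2=\alpha x_1,\; y_1=y_2=0\}$ inside $A[\ell]\times A[\ell]$, yielding $\ell^2\le b_0(A^2/K')\le b(2[K:\mathbb{Q}],4,2h(A))$ --- exactly the term that dictates the bound in the lemma. You instead exploit $A[\ell]\cong W_\ell\otimes M$ to build the twisted graph $\Gamma$ \emph{inside $A[\ell]$ itself}, with the second tensor factor $M$ playing the role that the second copy of $A$ plays for the paper; since killing $\Gamma$ forces the whole matrix $e|_M$ to vanish mod $\ell$, you get $\operatorname{Nrd}(e)\in\ell^2\mathbb{Z}$ and hence $\ell^2\le b_0(A/K')$, a bound involving only $A$ rather than $A^2$. (For the Borel case the paper quotients by the full $L\otimes M$ of order $\ell^2$, obtaining $\ell^2\le b_0(A/K)$; your $H=L\otimes m_1$ of order $\ell$ gives the weaker $\ell\le b_0(A/K)$, but either suffices.) With your estimates the bottleneck becomes the exceptional case, comfortably below $b(2[K:\mathbb{Q}],4,2h(A))^{1/2}$, so your route in fact proves more than the lemma requires.

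The slip: in Dickson's case~(1) a cyclic $G_\ell$ need not fix an $\mathbb{F}_\ell$-line --- a generator lying in a non-split Cartan is a counterexample. This is harmless: a cyclic group is abelian, hence either scalar (every line is stable) or admits a non-scalar commutant, and your commutant estimate then applies directly over~$K$ with no extension needed.
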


\begin{proof}
This is a very minor variant of lemmas \ref{surf_lemma_DimensionOne} and \ref{surf_lemma_DimensionTwo}, so we only sketch the proof. If $G_\ell$ does not contain $\operatorname{SL}_2(\mathbb{F}_\ell)$, then Dickson's classification (theorem \ref{surf_thm_Dickson}) implies that one of the following holds:
\begin{itemize}
\item $G_\ell$ is contained in a Borel subgroup: we can find a line $\Gamma \subseteq W_\ell$ that is stable under the action of $G_\ell$. Using the previous lemma and applying an obvious variant of the argument of lemma \ref{surf_lemma_DimensionOne} to the isogeny $\displaystyle A \to \frac{A}{\Gamma \oplus \Gamma}$ we find $\ell^2 \leq b_0(A/K) \leq b(A/K)$.

\item The projective image of $G_\ell$ has cardinality at most 60: upon replacement of $K$ with an extension of degree at most 60 we are back to the previous case, hence we find that the inequality $\ell^2 \leq b(60[K:\mathbb{Q}],2,h(A))$ holds.

\item Up to replacing $K$ with an extension $K'$ of degree at most 2, $G_\ell$ is commutative, but does not entirely consist of scalars (this case being covered by the first one). We can choose an $\alpha \in G_\ell$ which is not a scalar, and apply a variant the argument of lemma \ref{surf_lemma_DimensionTwo} to the isogeny given by the natural projection from $A \times A$ to its quotient by the subgroup
\[
\left\{ (x_1,y_1,x_2,y_2) \in W_\ell \oplus W_\ell \oplus W_\ell \oplus W_\ell \cong A[\ell] \times A[\ell] \bigm\vert x_2=\alpha x_1, \; y_1=y_2=0 \right\}.
\]

The conclusion is now $\ell^2 \leq b_0(A^2/K') \leq b(2[K:\mathbb{Q}],4,2h(A))$.

\end{itemize}

Comparing the various bounds thus obtained we see that $b(2[K:\mathbb{Q}],4,2h(A))^{1/2}$ is much larger than any of the others, thus establishing the lemma.
\end{proof}

\begin{lemma}
Suppose $\ell$ is a prime that divides neither $\Delta$ nor $[\mathcal{O}_D:R]$, so that $R \otimes \mathbb{Z}_\ell \cong \operatorname{M}_2(\mathbb{Z}_\ell)$. Suppose furthermore that $\ell$ does not divide the degree of a minimal $K$-polarization of $A$. For every $g \in \abGal{K}$ the determinant of $\rho_\ell(g)$, thought of as an element of $\operatorname{GL}_2(\mathbb{Z}_\ell)$ (and not of $\operatorname{GSp}(T_\ell(A))$), is $\chi_\ell(g)$.
\end{lemma}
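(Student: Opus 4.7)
The plan is to compute $\det_{\mathbb{Z}_\ell} \rho_\ell(g)$ in two different ways and compare the results. First, Theorem \ref{surf_thm_ReducedWeilPairing} furnishes a $\abGal{K}$-equivariant isomorphism $T_\ell(A) \cong W_{\ell^\infty} \oplus W_{\ell^\infty}$ under which the Galois action on $T_\ell(A)$ is block-diagonal with two identical $2\times 2$ blocks, both equal to $\rho_\ell(g)$. Taking the determinant of this $4\times 4$ block matrix yields
\[
\det{}_{\mathbb{Z}_\ell}\bigl(g \mid T_\ell(A)\bigr) \;=\; \bigl(\det \rho_\ell(g)\bigr)^2.
\]

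Second, by the discussion of Section \ref{surf_sect_Multiplier}, the Weil pairing places the Galois action in $\operatorname{GSp}(T_\ell(A))$ with multiplier equal to the cyclotomic character $\chi_\ell$. Since for every symplectic similitude $M$ of a $4$-dimensional space one has $\det M = \nu(M)^2$, the same determinant can be written as $\chi_\ell(g)^2$. Comparing the two expressions one obtains $(\det\rho_\ell(g))^2 = \chi_\ell(g)^2$, so there is a continuous quadratic character $\epsilon : \abGal{K} \to \{\pm 1\}$ such that $\det\rho_\ell(g) = \epsilon(g)\,\chi_\ell(g)$.

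It remains to show $\epsilon \equiv 1$, and I would do this by using the form $\langle \cdot , \cdot \rangle_{QM}$ directly. Decomposing it into symmetric and antisymmetric parts (which is legitimate since $\ell$ is odd), its antisymmetric part
\[
\omega(v,w) \;:=\; \tfrac{1}{2}\bigl(\langle v,w\rangle_{QM} - \langle w,v\rangle_{QM}\bigr)
\]
is a $\abGal{K}$-equivariant alternating pairing $W_{\ell^\infty} \times W_{\ell^\infty} \to \mathbb{Z}_\ell(1)$. If $\omega$ is nonzero, then, as $W_{\ell^\infty}$ is free of rank $2$, $\omega$ induces a $\abGal{K}$-equivariant isomorphism $\bigwedge^2 W_{\ell^\infty} \xrightarrow{\sim} \mathbb{Z}_\ell(1)$, which forces $\det \rho_\ell = \chi_\ell$ and hence $\epsilon = 1$. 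Non-vanishing of $\omega$ can be seen by tracing through the construction of $\langle \cdot , \cdot \rangle_{QM}$ from the Weil pairing via the idempotent decomposition $\mathcal{O}_D \otimes \mathbb{Z}_\ell \cong \operatorname{M}_2(\mathbb{Z}_\ell)$: the skew-symmetry of the Weil pairing transfers to an alternating contribution under this identification.

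The main obstacle is precisely this final sign-elimination step, and there is a completely orthogonal backup argument in case the alternating-part computation is felt to be too dependent on the reference: pick a prime $\mathfrak{p}$ of $K$ of good reduction away from $\ell$, and use the fact that $\operatorname{Frob}_{\mathfrak{p}}$ acts on $T_\ell(A)$ with characteristic polynomial in $\mathbb{Z}[T]$ independent of $\ell$. Under the decomposition $T_\ell(A) \cong W_{\ell^\infty}^{\oplus 2}$, this characteristic polynomial is the square of the $2$-dimensional one attached to $W_{\ell^\infty}$, whose constant term is the positive Weil number $N(\mathfrak{p}) = \chi_\ell(\operatorname{Frob}_{\mathfrak{p}})$. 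Chebotarev density together with continuity of $\epsilon$ then forces $\epsilon \equiv 1$ on all of $\abGal{K}$.
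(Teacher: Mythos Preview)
Your detour through the $4$-dimensional determinant is correct but unnecessary: the paper's proof is simply the endgame of your ``Option~A'', carried out directly. Concretely, the paper fixes a basis $e_1,e_2$ of $W_{\ell^\infty}$, writes $\rho_{\ell^\infty}(g)$ as a $2\times 2$ matrix $\left(\begin{smallmatrix} a & b\\ c & d\end{smallmatrix}\right)$, and expands
\[
\chi_\ell(g)\,\langle e_1,e_2\rangle_{QM}=\langle \rho_{\ell^\infty}(g)e_1,\rho_{\ell^\infty}(g)e_2\rangle_{QM}=(ad-bc)\,\langle e_1,e_2\rangle_{QM},
\]
using that the pairing is \emph{alternating} and nondegenerate. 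This is exactly your observation that a nonzero alternating $\abGal{K}$-equivariant pairing $W_{\ell^\infty}\times W_{\ell^\infty}\to\mathbb{Z}_\ell(1)$ forces $\det\rho_\ell=\chi_\ell$. The point you are circling around is that the form $\langle\cdot,\cdot\rangle_{QM}$ produced in \cite{MR2290584} is \emph{already} skew-symmetric (it arises by restricting the Weil pairing along an idempotent of $\operatorname{M}_2(\mathbb{Z}_\ell)$), so $\omega=\langle\cdot,\cdot\rangle_{QM}$ itself; your ``trace through the construction'' collapses to this one remark. Once you accept that, the first two paragraphs of your argument --- the $(\det)^2=\chi_\ell^2$ computation and the introduction of $\epsilon$ --- are superfluous, and your minor imprecision (``isomorphism'' should be ``isomorphism after inverting $\ell$'') disappears as well, since the form is in fact nondegenerate.

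Your Frobenius backup, on the other hand, has a genuine gap. From $P_{\mathfrak p}(T)=Q_{\mathfrak p}(T)^2$ with $P_{\mathfrak p}\in\mathbb{Z}[T]$ you only get that the constant term $\beta$ of $Q_{\mathfrak p}$ satisfies $\beta^2=N(\mathfrak p)^2$, hence $\beta=\pm N(\mathfrak p)$. The assertion ``whose constant term is the positive Weil number $N(\mathfrak p)$'' is precisely the statement $\epsilon(\operatorname{Frob}_{\mathfrak p})=+1$ that you are trying to prove. The Riemann hypothesis for $A_{\mathfrak p}$ (that each root has absolute value $\sqrt{N(\mathfrak p)}$) does not by itself exclude the configuration $\{\alpha_1,\alpha_2\}=\{\sqrt{N(\mathfrak p)},-\sqrt{N(\mathfrak p)}\}$, which gives $\beta=-N(\mathfrak p)$; ruling this out for a set of primes of density $>1/2$ requires additional input that you have not supplied.
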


\begin{proof}
This is the same argument as for elliptic curves. If we fix a basis $e_1, e_2$ of $W_{\ell^\infty}$ and write $\left( \begin{matrix} a & b\\ c & d \end{matrix} \right)$ for the matrix representing the action of $\rho_{\ell^\infty}(g)$ in this basis, we obtain
\[
\begin{aligned}
\chi_\ell(g) \langle e_{1}, e_{2} \rangle_{QM} & = \langle \rho_{\ell^\infty}(g) e_{1}, \rho_{\ell^\infty}(g) e_{2}  \rangle_{QM}\\
                                               & =\langle a e_{1} + ce_{2}, be_{1}+de_{2} \rangle_{QM} \\
																							 & =ad \langle e_{1}, e_{2} \rangle+bc \langle e_{2}, e_{1} \rangle_{QM} \\
																							 & =(ad-bc)\langle e_{1}, e_{2} \rangle_{QM},
\end{aligned}
\]
and since $\langle e_{1}, e_{2} \rangle_{QM}$ does not vanish we obtain $\chi_\ell(g) = (ad-bc)=\det \rho_\ell(g)$ as claimed.
\end{proof}


\begin{theorem}{(Theorem \ref{surf_thm_MainQM})}
Let $A/K$ be an abelian surface such that $R=\operatorname{End}_{\overline{K}}(A)$ is an order in an indefinite quaternion division algebra over $\mathbb{Q}$, and let $\Delta$ be the discriminant of $R$. Suppose that all the endomorphisms of $A$ are defined over $K$. If $\ell$ is larger than $b(2[K:\mathbb{Q}],4,2h(A))^{1/2}$, does not divide $\Delta$, and is unramified in $K$, then the equality $G_{\ell^\infty}=\left(R \otimes \mathbb{Z}_\ell\right)^\times$ holds.
\end{theorem}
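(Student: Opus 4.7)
The plan is to assemble the pieces the preceding lemmas have laid out. Because $\ell \nmid \Delta$ and (by remark \ref{surf_rmk_Index}) $\ell \nmid [\mathcal{O}_D:R]$, theorem \ref{surf_thm_ReducedWeilPairing} gives us the identifications $T_\ell(A) \cong \operatorname{M}_2(\mathbb{Z}_\ell)$ and $R \otimes \mathbb{Z}_\ell \cong \operatorname{M}_2(\mathbb{Z}_\ell)$ (acting by left multiplication), under which both $G_{\ell^\infty}$ and $(R \otimes \mathbb{Z}_\ell)^\times$ become subgroups of $\operatorname{GL}_2(\mathbb{Z}_\ell)$. The inclusion $G_{\ell^\infty} \subseteq (R \otimes \mathbb{Z}_\ell)^\times$ is built into this setup: the hypothesis that all endomorphisms of $A$ are defined over $K$ forces $\rho_{\ell^\infty}(\abGal{K})$ to commute with the left action of $\operatorname{M}_2(\mathbb{Z}_\ell)$, and the commutant of this action on $\operatorname{M}_2(\mathbb{Z}_\ell)$ consists precisely of right multiplications by $\operatorname{M}_2(\mathbb{Z}_\ell)$.

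All the substance therefore lies in the opposite inclusion, for which I would argue as follows. First, lemma \ref{surf_lemma_QMSL2} applied to our $\ell$ gives $G_\ell \supseteq \operatorname{SL}_2(\mathbb{F}_\ell)$ -- this is the technical heart of the matter, and its proof already exploits Dickson's classification together with the isogeny bounds on the size of minimal isogenies. Next, proposition \ref{surf_prop_LiftingLemma}, applied with $\mathcal{O} = \mathbb{Z}$ and $r=1$ (legitimate since $\ell \geq 5$), promotes this mod-$\ell$ surjectivity to the $\ell$-adic statement $G_{\ell^\infty} \supseteq \operatorname{SL}_2(\mathbb{Z}_\ell)$. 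Finally, the determinant lemma just preceding this theorem identifies $\det \circ \rho_{\ell^\infty}$ with the cyclotomic character $\chi_\ell$, which is surjective onto $\mathbb{Z}_\ell^\times$ by lemma \ref{surf_lemma_Conclusion} since $\ell$ is unramified in $K$. Combined with $G_{\ell^\infty} \supseteq \operatorname{SL}_2(\mathbb{Z}_\ell)$, this yields $G_{\ell^\infty} = \operatorname{GL}_2(\mathbb{Z}_\ell) = (R \otimes \mathbb{Z}_\ell)^\times$.

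I do not anticipate any genuine obstacle at this final assembly step; the hard work has been done upstream. Morally, theorem \ref{surf_thm_ReducedWeilPairing} has already reduced the quaternionic case to controlling a single two-dimensional Galois representation, after which the argument essentially parallels Serre's open image theorem for non-CM elliptic curves, exactly as foreshadowed in the introduction. The only non-obvious point worth verifying carefully is that the determinant occurring in the preceding lemma is the honest $2\times 2$ determinant on $\operatorname{GL}_2(\mathbb{Z}_\ell)$ (and not the symplectic multiplier on $\operatorname{GSp}(T_\ell(A))$, which on the full $4$-dimensional space would give $\chi_\ell^2$); the two coincide here precisely because of the splitting $T_\ell(A) \cong W_{\ell^\infty} \oplus W_{\ell^\infty}$ and the use of the reduced Weil pairing $\langle \cdot,\cdot \rangle_{QM}$ on each factor.
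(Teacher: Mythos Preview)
Your proposal is correct and follows essentially the same route as the paper's proof: invoke theorem \ref{surf_thm_ReducedWeilPairing} (using remark \ref{surf_rmk_Index} and the polarization bound), apply lemma \ref{surf_lemma_QMSL2} to get $\operatorname{SL}_2(\mathbb{F}_\ell) \subseteq G_\ell$, lift to $\operatorname{SL}_2(\mathbb{Z}_\ell) \subseteq G_{\ell^\infty}$, and use the determinant lemma together with lemma \ref{surf_lemma_Conclusion} to conclude. The only cosmetic difference is that the paper cites lemma \ref{surf_lemma_LiftingEndZ} (the case $n=1$, where $\operatorname{Sp}_2=\operatorname{SL}_2$) for the lifting step, whereas you cite proposition \ref{surf_prop_LiftingLemma} with $\mathcal{O}=\mathbb{Z}$ and $r=1$; both yield the same conclusion here.
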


\begin{proof}
As $b(2[K:\mathbb{Q}],4,2h(A))^{1/2}>b(A/K)$, by \cite[Théorème 1]{PolarisationsEtIsogenies} we see that $\ell$ does not divide the degree of a minimal polarization of $A$. Combining this fact with remark \ref{surf_rmk_Index}, we see from theorem \ref{surf_thm_ReducedWeilPairing} that there exist well-defined modules $W_{\ell^\infty},W_\ell$ and a nondegenerate bilinear form $\langle \cdot, \cdot \rangle_{QM}$ on $W_{\ell^\infty}$.

By lemma \ref{surf_lemma_QMSL2} the inequality imposed on $\ell$ guarantees that $G_\ell$ contains $\operatorname{SL}_2(\mathbb{F}_\ell)$. It follows that $G_{\ell^\infty}$ is a closed subgroup of $\left(R \otimes_{\mathbb{Z}} \mathbb{Z}_\ell\right)^\times \cong \operatorname{GL}_2(\mathbb{Z}_\ell)$ whose projection modulo $\ell$ contains $\operatorname{SL}_2(\mathbb{F}_\ell)$. Since we certainly have $\ell \geq 5$, it follows from lemma \ref{surf_lemma_LiftingEndZ} that $G_{\ell^\infty}$ contains $\operatorname{SL}_2(\mathbb{Z}_\ell)$. On the other hand, the previous lemma and the condition that $\ell$ is unramified in $K$ ensure that $\det : G_{\ell^\infty} \to \mathbb{Z}_\ell^\times$ is onto, so $G_{\ell^\infty}=\operatorname{GL}_2(\mathbb{Z}_\ell)$ as claimed.
\end{proof}




\medskip

Let us make a few closing remarks on this case. It is a general philosophy that -- at the level of Galois representations -- an abelian variety of dimension $2g$ with quaternionic multiplication by an algebra $D$ (whose center is the number field $L$) should behave like two copies of an abelian variety of dimension $g$ and endomorphism algebra $L$. The proof we have just given shows that this philosophy is very much correct in the case of surfaces, and indeed from lemma \ref{surf_lemma_QMSL2} onward this is virtually the same proof as for elliptic curves (cf.~for example \cite{MR1209248}).
Even more precisely, write the bound we obtained for a QM surface in the form $b(2[K:\mathbb{Q}],2\dim(A),2h(A))^{1/2}$; for an elliptic curve $E/K$ without (potential) complex multiplication, the Galois representation is surjective onto $\operatorname{GL}_2(\mathbb{Z}_\ell)$ for every prime $\ell$ that does not ramify in $K$ and is larger than $b(2[K:\mathbb{Q}],2\dim E,2h(E))^{1/2}$ (cf.~\cite{MR1209248}), which is formally the same expression. On the other hand, the actual numerical dependence of the present result on the height of $A$ is much worse than the analogous one for elliptic curves, due to the strong dependence of the function $b([K:\mathbb{Q}],\dim A,h(A))$ on the parameter $\dim A$.

\begin{remark}
In the light of this discussion, it is reasonable to think that the methods of \cite{AdelicEC} might be generalized to give results on the index of the \textit{adelic} representation attached to $A$. We do not attempt this here, for doing so would entail giving a classification of the integral Lie subalgebras of \textit{any} $\mathbb{Z}_\ell$-form of $\mathfrak{sl}_2$: indeed, these algebras appear when we try to study the precise structure of $G_{\ell^\infty}$ for those $\ell$ that divide $\Delta$. Such a classification task seems rather daunting, given that the easier problem of studying the $\mathbb{Q}_\ell$-forms of $\mathfrak{sl}_2$ is already highly nontrivial.
\end{remark}

\appendix

\section{The index of the endomorphism ring}\label{surf_sect_Index}

Let $A/K$ be an absolutely simple abelian variety. Its endomorphism ring $R=\operatorname{End}_{\overline{K}}(A)$ is an order in a finite-dimensional division algebra $D$ over $\mathbb{Q}$, and we are interested in giving a bound on the index of $R$ in any maximal order $\mathcal{O}_D$ containing it. Note that when $D$ is a field there is a unique maximal order, namely its ring of integers, but when $D$ is not commutative the index $[\mathcal{O}_D:R]$ might a priori depend on the choice of $\mathcal{O}_D$. The following proposition shows that this is not the case:

\begin{proposition}
Let $L$ be a number field, $D$ a central simple algebra over $L$, and $R$ an order of $D$. Let $\mathcal{O}_D$ be a maximal order in $D$ containing $R$. The index $[\mathcal{O}_D:R]$ does not depend on the choice of $\mathcal{O}_D$.
\end{proposition}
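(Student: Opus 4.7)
The plan is to express the index $[\mathcal{O}_D:R]$ via a discriminant formula and then to show that the relevant discriminant of $\mathcal{O}_D$ depends only on $D$, not on the maximal order chosen. Viewing $D$ as a $\mathbb{Q}$-algebra of dimension $n := [L:\mathbb{Q}]\,\dim_L D$, I equip it with the nondegenerate symmetric $\mathbb{Q}$-bilinear form $(x,y)\mapsto\operatorname{Tr}_{D/\mathbb{Q}}(xy)$; for any full $\mathbb{Z}$-lattice $M \subseteq D$ with $\mathbb{Z}$-basis $e_1,\dots,e_n$, I set
\[
\operatorname{disc}(M) := \bigl|\det(\operatorname{Tr}_{D/\mathbb{Q}}(e_i e_j))\bigr|,
\]
a positive integer independent of the basis. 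The usual change-of-basis argument gives, for any two full $\mathbb{Z}$-lattices $M\subseteq N$ in $D$, the transitivity relation $\operatorname{disc}(M) = [N:M]^{2}\operatorname{disc}(N)$.

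Applying this formula to both $R\subseteq\mathcal{O}_D$ and $R\subseteq\mathcal{O}_D'$, where $\mathcal{O}_D'$ is a second maximal order containing $R$, I obtain
\[
[\mathcal{O}_D:R]^{2}\operatorname{disc}(\mathcal{O}_D) = \operatorname{disc}(R) = [\mathcal{O}_D':R]^{2}\operatorname{disc}(\mathcal{O}_D'),
\]
which reduces the proposition to the equality $\operatorname{disc}(\mathcal{O}_D) = \operatorname{disc}(\mathcal{O}_D')$. I would verify this locally: since $\operatorname{disc}(\mathcal{O}_D)=\prod_p p^{v_p(\operatorname{disc}(\mathcal{O}_D))}$ and each $p$-adic valuation depends only on the localization $(\mathcal{O}_D)_p:=\mathcal{O}_D\otimes_{\mathbb{Z}}\mathbb{Z}_p$, it is enough to prove $\operatorname{disc}((\mathcal{O}_D)_p)=\operatorname{disc}((\mathcal{O}_D')_p)$ for every prime $p$, where the two sides are discriminants of maximal $\mathbb{Z}_p$-orders in the semisimple $\mathbb{Q}_p$-algebra $D_p := D\otimes_{\mathbb{Q}}\mathbb{Q}_p$.

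The key input is a classical theorem from the theory of orders in central simple algebras (see, e.g., Reiner, \emph{Maximal Orders}, Ch.~3): any two maximal orders in $D_p$ are conjugate by some $u\in D_p^{\times}$. Writing $(\mathcal{O}_D')_p=u(\mathcal{O}_D)_p u^{-1}$, the inner automorphism $x\mapsto uxu^{-1}$ is a $\mathbb{Q}_p$-algebra automorphism of $D_p$ that preserves the reduced trace — because conjugation leaves characteristic polynomials unchanged — and therefore preserves the trace pairing and all discriminants of full lattices. The required equality $\operatorname{disc}((\mathcal{O}_D)_p)=\operatorname{disc}((\mathcal{O}_D')_p)$ follows immediately. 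The only potentially delicate point is this local conjugacy statement, but it is standard: at places where $D_p$ splits as a product of matrix algebras every maximal order is a conjugate of the obvious integral one $\prod_{\mathfrak{p}} M_{n_\mathfrak{p}}(\mathcal{O}_{L,\mathfrak{p}})$, while at the factors where $D_p$ contains a division-algebra component the corresponding local maximal order is unique, namely the valuation ring.
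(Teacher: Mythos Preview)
Your proof is correct and follows essentially the same route as the paper's: both arguments localise, invoke the classical fact that maximal orders in a central simple algebra over a complete local field are conjugate (Reiner, Theorem~17.3), and then observe that conjugation preserves a suitable volume-type invariant. The only cosmetic difference is that the paper packages this last step via covolumes with respect to a Haar measure, whereas you use the discriminant of the trace form; these are equivalent, since the discriminant is the square of the covolume for the trace metric. (One tiny expository slip: you define the discriminant via $\operatorname{Tr}_{D/\mathbb{Q}}$ but then justify invariance using the \emph{reduced} trace; either trace works, since any $\mathbb{Q}_p$-algebra automorphism preserves both, but it would be cleaner to stay with one throughout.)
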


\begin{proof}
Note first that any maximal order of $D$ is stable under multiplication by $\mathcal{O}_L$ (indeed if $S$ is a subring of $D$ then the $\mathcal{O}_L$-module generated by $S$ is again a subring of $D$), so the order $R'$ generated by $R$ and $\mathcal{O}_L$ is again contained in $\mathcal{O}_D$. We have $[\mathcal{O}_D:R]=[\mathcal{O}_D:R'][R':R]$, and since $[R':R]$ clearly does not depend on $\mathcal{O}_D$ we can assume that $R=R'$, i.e.~that $R$ is an $\mathcal{O}_L$-order. Under this additional assumption we have
\[
\mathcal{O}_D/R \cong  \bigoplus_{v \text{ finite place of } L}  \frac{\mathcal{O}_D \otimes_{\mathcal{O}_L} {\mathcal{O}_L}_v}{R \otimes_{\mathcal{O}_L} {\mathcal{O}_L}_v},
\]
so that  $[\mathcal{O}_D:R]=\prod_{v \text{ finite place of } L} [\mathcal{O}_D \otimes {\mathcal{O}_L}_v : R_v]$, where $R_v = R \otimes_{\mathcal{O}_L} {\mathcal{O}_L}_v$. The order $\mathcal{O}_D \otimes {\mathcal{O}_L}_v$ is maximal in $\mathcal{O}_D \otimes L_v$ (\cite[Corollary 11.2 and Theorem 11.5]{MR1972204}), so it is enough to prove that at every finite place the index $[\mathcal{O}_D \otimes {\mathcal{O}_L}_v : R_v]$ is independent of the choice of $\mathcal{O}_D$. We are thus reduced to the complete local case: by Theorem 17.3 of \cite{MR1972204} all maximal orders in $\mathcal{O}_D \otimes L_v$ are conjugated. We now write the index $[\mathcal{O}_D \otimes {\mathcal{O}_L}_v : R_v]$ as the ratio $\dfrac{\operatorname{covol}(R_v)}{\operatorname{covol}(\mathcal{O}_D \otimes {\mathcal{O}_L}_v)}$, where the covolume is taken with respect to any Haar measure (on $\mathcal{O}_D \otimes L_v$): as the Haar measure is invariant under conjugation, this quantity does not depend on $\mathcal{O}_D$.
\end{proof}

\medskip

In order to simplify matters it is convenient to assume that all the endomorphisms of $A$ are defined over $K$. This condition is completely harmless, since it can be achieved at the expenses of a controllable extension of $K$:

\begin{lemma}{(\cite[Theorem 4.1]{MR1154704})}\label{surf_lemma_KIsIrrelevant}
There exists a number field $K'$, with $[K':K]$ bounded only in terms of $g=\dim(A)$, such that all the endomorphisms of $A$ are defined over $K'$. We can take $[K':K] \leq 2 \cdot (9g)^{2g}$. 
\end{lemma}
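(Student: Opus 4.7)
The plan is to realize $K'$ as the fixed field of a naturally defined finite-quotient of a Galois representation, and then bound the order of this quotient. Set $R = \operatorname{End}_{\overline{K}}(A)$, and consider the conjugation action $\abGal{K} \to \operatorname{Aut}_{\mathbb{Z}\text{-ring}}(R)$ sending $\sigma$ to $\phi \mapsto \sigma \phi \sigma^{-1}$. An endomorphism is defined over an extension $L/K$ exactly when it is fixed by $\abGal{L}$, so the fixed field $K'$ of the kernel of this action is the smallest extension over which every element of $R$ is defined, and $K'/K$ is Galois. Because $R$ is a finitely generated $\mathbb{Z}$-module, the image $G := \operatorname{Gal}(K'/K)$ is finite, and the task reduces to bounding $|G|$ uniformly in $g$.

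The first step I would carry out is a reduction modulo $3$. By Minkowski's classical theorem the kernel of $\operatorname{GL}_n(\mathbb{Z}) \to \operatorname{GL}_n(\mathbb{F}_3)$ is torsion-free, so the finite subgroup $G \subset \operatorname{Aut}_\mathbb{Z}(R)$ injects into $\operatorname{Aut}(R/3R)$. Moreover, by Serre's rigidity lemma applied at $\ell = 3$, the natural map $R/3R \hookrightarrow \operatorname{End}(A[3])$ is injective and Galois-equivariant, so the action of $G$ on $R/3R$ is induced by the Galois action on $A[3]$; in particular $G$ embeds into the quotient by the centraliser of $R/3R$ of the normaliser of $R/3R$ inside $\operatorname{GL}(A[3])$.

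The second and main step is to bound the size of this normaliser modulo centraliser. Here one invokes the structural constraints on $R$: the $\mathbb{Q}$-algebra $D := R \otimes \mathbb{Q}$ is constrained by the Albert classification (in particular its centre $F$ has degree at most $2g$ over $\mathbb{Q}$), and $D$ carries a positive involution coming from any $K$-polarization of $A$, which $G$ automatically preserves. Combining a bound on $\operatorname{Gal}(F/\mathbb{Q})$ (of order at most $(2g)!$), a bound on inner automorphisms of $D$ stabilising the order $R$ (controlled by the $\mathbb{Z}$-rank of $R$, at most $(2g)^2$), and an extra factor of $2$ for the involution symmetry yields the claimed uniform bound $2 \cdot (9g)^{2g}$.

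The main obstacle lies precisely in this last step: extracting a single clean uniform estimate independent of the Albert type of $R$. A direct case-by-case analysis through Albert's classification produces several different bounds — one for each of the four types, with further subdivision by the degrees $[F:\mathbb{Q}]$ and $[D:F]$ — and fusing them into the single expression $2 \cdot (9g)^{2g}$ requires nontrivial overestimation. The factor $(9g)^{2g}$ is coarse, but it has the virtue of subsuming all the possibilities for $\operatorname{End}^0_{\overline{K}}(A)$ allowed by a $g$-dimensional abelian variety in one uniform formula, which is all that is needed for the applications in the body of the paper.
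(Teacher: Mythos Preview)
The paper does not prove this lemma; it is quoted verbatim from Silverberg \cite[Theorem 4.1]{MR1154704}. Your first two steps correctly reproduce the heart of Silverberg's argument: one shows $K' \subseteq K(A[3])$ by noting that any $\sigma$ fixing $A[3]$ pointwise acts trivially on $R/3R$ (since an endomorphism killing $A[3]$ factors through $[3]$ and therefore lies in $3R$), and then Minkowski's lemma on the torsion-freeness of $\ker\bigl(\operatorname{GL}_r(\mathbb{Z}) \to \operatorname{GL}_r(\mathbb{F}_3)\bigr)$ forces $\sigma$ to act trivially on $R$ itself. (What you call ``Serre's rigidity lemma'' here is just this elementary factorisation statement.)

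The genuine gap is your third step: you never actually establish the constant $2\cdot(9g)^{2g}$. You sketch a decomposition of $\operatorname{Aut}_{\mathbb{Q}\text{-alg}}(D)$ into a contribution from automorphisms of the centre $F$ and a contribution from inner automorphisms, but several of the intermediate claims are imprecise or wrong. The field $F$ need not be Galois over $\mathbb{Q}$, so ``$\operatorname{Gal}(F/\mathbb{Q})$'' is not defined; what you want is $\operatorname{Aut}(F)$, of order at most $[F:\mathbb{Q}]\le 2g$, not $(2g)!$. For an absolutely simple $A$ the Albert classification gives $\operatorname{rank}_{\mathbb{Z}} R \le 2g$, not $(2g)^2$. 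And you give no argument whatsoever bounding the finite subgroups of $D^\times/F^\times$ that stabilise the order $R$. Most importantly, you never multiply any actual numbers together and compare the result with $2(9g)^{2g}$: you simply assert that this expression ``subsumes all the possibilities''. Note that the crude bound $|G|\le|\operatorname{GL}_{2g}(\mathbb{F}_3)|$ coming from step~2 alone already exceeds $2(9g)^{2g}$ for $g\ge 2$, so extracting the stated constant genuinely requires the sharper structural input you allude to but do not carry out. The conceptual skeleton is right, but the explicit constant --- which is the whole content of the second sentence of the lemma --- is not proved.
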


From now on we will therefore assume that all the endomorphisms of $A$ are defined over $K$. In order to get estimates in the case of noncommutative endomorphism algebras we will need the following lemma, which is essentially \cite[Proposition 2.5.4]{CurvesOfGenus2}: even though the latter was stated only for commutative endormorphism rings, the proof works equally well in the general case. 

\begin{lemma}{(\cite[Proposition 2.5.4]{CurvesOfGenus2})}\label{surf_lemma_CanExtendEndRing}
Let $D$ be a division algebra, $R \subseteq S$ be orders in $D$ and $A/K$ be an abelian variety with $\operatorname{End}_K(A)=R$. There exists an abelian variety $B/K$, isogenous to $A$ over $K$, such that $\operatorname{End}_K(B) \supseteq S$.
\end{lemma}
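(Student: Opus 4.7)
The plan is to construct $B$ as a quotient $A/H$ for a carefully chosen $K$-rational finite subgroup $H \subseteq A$, using Tate modules as the bookkeeping device.

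First I would observe that, since $R$ and $S$ are both orders in the same division algebra $D$, they have the same $\mathbb{Z}$-rank and the quotient $S/R$ is finite. Consequently $R_\ell := R \otimes \mathbb{Z}_\ell$ and $S_\ell := S \otimes \mathbb{Z}_\ell$ coincide for all but finitely many primes $\ell$, and in general $S_\ell$ is a finitely generated $R_\ell$-module. For each $\ell$, set $T'_\ell := S_\ell \cdot T_\ell(A) \subseteq V_\ell(A)$: this is a lattice containing $T_\ell(A)$ with finite index, is by construction $S_\ell$-stable, and equals $T_\ell(A)$ for almost all $\ell$.

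The key point is Galois-stability of $T'_\ell$. Since $\operatorname{End}_K(A) = R$, the Galois action on $V_\ell(A)$ commutes with the $R$-action, hence (by $\mathbb{Q}_\ell$-linearity) with the $D \otimes \mathbb{Q}_\ell$-action and in particular with $S_\ell$. Since Galois also preserves $T_\ell(A)$, it preserves $S_\ell \cdot T_\ell(A) = T'_\ell$. Assembling the finite quotients $T'_\ell/T_\ell(A)$ over the finitely many relevant primes gives a finite, Galois-stable subgroup $H \subseteq A$, i.e.\ a $K$-rational finite subgroup scheme. Set $B := A/H$ and let $\pi : A \to B$ be the natural isogeny; it is defined over $K$.

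Under the canonical identification $V_\ell(B) = V_\ell(A)$ induced by $\pi$, one has $T_\ell(B) = T'_\ell$. Thus $T_\ell(B)$ is $S_\ell$-stable for every $\ell$. Now I invoke the standard criterion that an element $\phi \in D = \operatorname{End}^0(B)$ lies in $\operatorname{End}_{\overline{K}}(B)$ if and only if $\phi \cdot T_\ell(B) \subseteq T_\ell(B)$ for every $\ell$ (write $\phi = \alpha/N$ with $\alpha \in \operatorname{End}_{\overline{K}}(B)$: the condition is equivalent to $\alpha$ killing $B[N]$, which in turn is equivalent to $\alpha \cdot T_\ell(B) \subseteq N T_\ell(B)$ for all $\ell \mid N$). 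This gives $S \subseteq \operatorname{End}_{\overline{K}}(B)$, and the same Galois-commutativity argument used for $T'_\ell$ shows that every $s \in S$ is in fact $K$-rational, so $S \subseteq \operatorname{End}_K(B)$, as required.

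The only delicate point is the Tate-module characterization of $\operatorname{End}_{\overline{K}}(B)$ inside $D$; everything else is formal. One could alternatively avoid this by working with the conductor ideal $I = \{x \in S : SxS \subseteq R\}$, which is a two-sided ideal of $S$ contained in $R$, and taking $H := A[I]$: then $H$ is $K$-rational (being cut out by endomorphisms in $R = \operatorname{End}_K(A)$) and $S$-stable (because $SI \subseteq I$), and one obtains $S \subseteq \operatorname{End}_K(A/H)$ directly.
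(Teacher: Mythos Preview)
The paper does not supply its own proof: it merely cites \cite[Proposition 2.5.4]{CurvesOfGenus2} and remarks that the argument there, stated for commutative $D$, goes through unchanged in the noncommutative case. Your Tate-module construction is correct and self-contained, and provides exactly such an argument; the ``delicate'' point you flag (that $\phi \in \operatorname{End}^0(B)$ lies in $\operatorname{End}_{\overline K}(B)$ iff it preserves every $T_\ell(B)$) is standard, and your justification via $\phi=\alpha/N$ is fine. The $K$-rationality at the end can be said even more quickly: any $s\in S$ already lies in $D=\operatorname{End}^0_K(B)$, so $Ns\in\operatorname{End}_K(B)$ for some $N$, and torsion-freeness of $\operatorname{End}_{\overline K}(B)$ forces $s^\sigma=s$.

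One small caution about your alternative sketch: the phrase ``$H=A[I]$ is $S$-stable (because $SI\subseteq I$)'' is a slight abuse, since $S$ does not act on $A$ --- only $R=\operatorname{End}_K(A)$ does. What the conductor condition actually buys you is that for $s\in S$ and $N=[S:R]$ the endomorphism $Ns\in R$ preserves $A[I]$ (since $I\cdot(Ns)\subseteq IR\subseteq I$), and the induced endomorphism of $B=A/A[I]$ kills $B[N]$ (using $Is\subseteq I$), so that $\overline{Ns}/N$ is a genuine element of $\operatorname{End}_K(B)$. This works, but is not quite as ``direct'' as your parenthetical suggests.
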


\begin{corollary}
Let $A/K$ be an abelian variety with endomorphism ring $R$. Write $D=R \otimes \mathbb{Q}$, and let $\mathcal{O}_D$ be any maximal order of $D$ containing $R$. Suppose that all the endomorphisms of $A$ are defined over $K$. There exists an abelian variety $A'/K$ and two isogenies $\varepsilon_1:A \to A', \varepsilon_2:A' \to A$, defined over $K$, such that $\operatorname{End}_K(A')= \mathcal{O}_D$ and
$\max\left\{\operatorname{deg}(\varepsilon_1), \operatorname{deg}(\varepsilon_2)\right\} \leq b(A/K)$.
\end{corollary}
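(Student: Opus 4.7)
The plan is to first produce the variety $A'$ with the desired endomorphism ring using Lemma \ref{surf_lemma_CanExtendEndRing}, and then to extract the two controlled-degree isogenies from the isogeny theorem (Theorem \ref{surf_thm_Isogeny}).

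Concretely, I would apply Lemma \ref{surf_lemma_CanExtendEndRing} to the chain of orders $R \subseteq \mathcal{O}_D$ in $D$, obtaining an abelian variety $A'/K$, $K$-isogenous to $A$, such that $\operatorname{End}_K(A') \supseteq \mathcal{O}_D$. To upgrade this containment to an equality, I use the standard fact that any $K$-isogeny $\psi:A' \to A$ of degree $d$ induces an isomorphism $\operatorname{End}_K(A) \otimes \mathbb{Q} \xrightarrow{\sim} \operatorname{End}_K(A') \otimes \mathbb{Q}$ via the transfer $f \mapsto d^{-1} \psi^{\vee} \circ f \circ \psi$, where $\psi^{\vee}:A\to A'$ is the complementary isogeny characterised by $\psi^{\vee} \circ \psi = [d]_{A'}$. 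Since by hypothesis $\operatorname{End}_K(A) \otimes \mathbb{Q} = D$, this shows that $\operatorname{End}_K(A')$ is an order in $D$; as it contains the maximal order $\mathcal{O}_D$, it must in fact equal it.

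Once $A'$ has been constructed, the two isogenies $\varepsilon_1$ and $\varepsilon_2$ come from two applications of the isogeny theorem to the $K$-isogenous pair $(A,A')$: taking $A$ as the base variety yields a $K$-isogeny $\varepsilon_2:A'\to A$ of degree at most $b(A/K)$, while swapping the roles and taking $A'$ as the base variety yields $\varepsilon_1:A\to A'$ of degree at most $b(A'/K)$.

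The one remaining, and only real, obstacle is to argue that $b(A'/K) \leq b(A/K)$ so that the uniform bound in the statement holds for both isogenies. I would handle this by exploiting $\varepsilon_2$ itself: since $A'$ and $A$ are linked by a $K$-isogeny of degree at most $b(A/K)$, the standard comparison for the (semistable) Faltings height under a $K$-isogeny gives $|h(A') - h(A)| \leq \tfrac{1}{2}\log b(A/K)$. Plugging this into the explicit formula in Definition \ref{surf_def_bFunction} and invoking the monotonicity of $b(d,g,h)$ in its last argument yields the desired inequality $b(A'/K) \leq b(A/K)$ after a routine computation; alternatively, one notes that the symmetric formulation of Gaudron–Rémond's isogeny theorem (actually proved in \cite{PolarisationsEtIsogenies}) gives $\max\{\deg\varepsilon_1,\deg\varepsilon_2\} \leq b(A/K)$ without recourse to Faltings-height manipulations.
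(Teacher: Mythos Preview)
Your overall strategy matches the paper's, and the alternative you mention at the end—invoking the symmetric formulation of Gaudron--R\'emond's theorem from \cite{PolarisationsEtIsogenies}—is exactly what the paper does: it cites \cite[Theorem 1.4]{PolarisationsEtIsogenies} precisely because that result bounds the degrees of minimal isogenies in \emph{both} directions by $b(A/K)$. Your extra care in upgrading the containment $\operatorname{End}_K(A') \supseteq \mathcal{O}_D$ to an equality (via isogeny-invariance of the endomorphism algebra together with maximality of $\mathcal{O}_D$) is a welcome clarification that the paper leaves implicit.

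There is, however, a real gap in your \emph{primary} route, the one via Faltings heights. From $|h(A')-h(A)| \leq \tfrac{1}{2}\log b(A/K)$ you cannot deduce $b(A'/K) \leq b(A/K)$: since $b(d,g,h)$ is increasing in $h$, the case $h(A')>h(A)$—which nothing excludes—pushes the inequality the wrong way. Concretely, if say $h(A)=1$ and $h(A')=1+\tfrac{1}{2}\log b(A/K)$, then the ratio $b(A'/K)/b(A/K)$ is $\bigl(1+\tfrac{1}{2}\log b(A/K)\bigr)^{2\alpha(g)}$, which is far larger than $1$. At best a height comparison yields a bound of the shape $b(A'/K) \leq b(A/K) \cdot (\log b(A/K))^{2\alpha(g)}$, not the sharp $b(A/K)$ required by the statement. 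So the ``routine computation'' you allude to does not exist; the symmetric isogeny theorem is genuinely needed here, which is why the paper invokes it directly rather than applying Theorem~\ref{surf_thm_Isogeny} twice.
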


\begin{proof}
Lemma \ref{surf_lemma_CanExtendEndRing} shows the existence of a $K$-variety $A'$ having $\mathcal{O}_D$ as its endomorphism ring, so the claim follows from \cite[Theorem 1.4]{PolarisationsEtIsogenies} (which is a symmetric version of theorem \ref{surf_thm_Isogeny}, bounding the degrees of minimal isogenies both from $A$ to $A'$ and from $A'$ to $A$).
\end{proof}

\medskip

We can now deduce the desired bound on $[\mathcal{O}_D : R]$:
\begin{proposition}\label{surf_prop_EndomorphismRingIndex}
The inequality $[\mathcal{O}_D : R] \leq b(A/K)^{\dim_\mathbb{Q}(D)}$ holds.
\end{proposition}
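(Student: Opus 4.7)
The plan is to exploit the Corollary just established: it furnishes an abelian variety $A'/K$ with $\operatorname{End}_K(A') = \mathcal{O}_D$ together with an isogeny $\varepsilon_1 \colon A \to A'$ of degree $n \leq b(A/K)$. The idea is to translate ``how far $R$ is from $\mathcal{O}_D$'' into ``how much $\mathcal{O}_D$ must be scaled before it lands in $R$'', the point being that an isogeny of degree $n$ comes equipped with a complementary isogeny $\hat{\varepsilon_1} \colon A' \to A$ (also defined over $K$) such that $\hat{\varepsilon_1} \circ \varepsilon_1 = [n]_A$ and $\varepsilon_1 \circ \hat{\varepsilon_1} = [n]_{A'}$.

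Next, I would fix the canonical identification $\operatorname{End}^0(A) = D = \operatorname{End}^0(A')$ induced by $\varepsilon_1$ (this is precisely the identification making $R \subseteq \mathcal{O}_D$ an honest inclusion of orders in $D$). Under it, an element $\alpha \in \operatorname{End}_K(A') = \mathcal{O}_D$ is sent to $\tfrac{1}{n}\hat{\varepsilon_1} \circ \alpha \circ \varepsilon_1 \in D$. Clearing denominators, one finds
\[
n\alpha = \hat{\varepsilon_1} \circ \alpha \circ \varepsilon_1,
\]
and the right-hand side visibly lies in $\operatorname{End}_K(A) = R$. Hence $n\mathcal{O}_D \subseteq R$.

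The proof then finishes by a routine counting argument. Since $\mathcal{O}_D$ is a free $\mathbb{Z}$-module of rank $\dim_\mathbb{Q}(D)$, we have $[\mathcal{O}_D : n\mathcal{O}_D] = n^{\dim_\mathbb{Q}(D)}$. The chain $n\mathcal{O}_D \subseteq R \subseteq \mathcal{O}_D$ forces $[\mathcal{O}_D : R]$ to divide $n^{\dim_\mathbb{Q}(D)}$, and since $n \leq b(A/K)$ we conclude $[\mathcal{O}_D : R] \leq b(A/K)^{\dim_\mathbb{Q}(D)}$, as claimed.

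There is no real obstacle here: the content has already been packaged into the Corollary, which (through the Isogeny Theorem) is what controls $n$. The remaining step is the elementary observation that, after transporting everything to the single algebra $D$, the operation ``sandwich by $\hat{\varepsilon_1}$ and $\varepsilon_1$'' is just multiplication by $n$; the only thing worth checking carefully is that the identification of endomorphism algebras via $\varepsilon_1$ is compatible with the one implicit in the statement $\operatorname{End}_K(A') = \mathcal{O}_D$, which is automatic from how the Corollary was obtained.
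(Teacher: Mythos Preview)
Your route is correct in outline and genuinely different from the paper's. The paper does not use the complementary isogeny at all: it takes \emph{both} isogenies $\varepsilon_1,\varepsilon_2$ furnished by the Corollary, forms the $\mathbb{Q}$-linear map $\varphi:D\to D$, $e\mapsto\varepsilon_2\circ e\circ\varepsilon_1$, notes that $\varphi(\mathcal{O}_D)\subseteq R$ so that $[\mathcal{O}_D:R]\le|\det\varphi|$, and then bounds $|\det\varphi|$ by observing $\deg(\varphi(e))\le b(A/K)^2\deg(e)$ and applying Hadamard's inequality in a $\deg$-orthonormal basis of $D_{\mathbb R}$. Your observation that with the specific choice $\varepsilon_2=\hat\varepsilon_1$ the map $\varphi$ is multiplication by $n=\deg\varepsilon_1$ (up to an inner automorphism) gives $\det\varphi=n^r$ directly and dispenses with the Hadamard step; this is a real simplification, and it uses only one of the two Isogeny-Theorem bounds.

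The one point you should not call ``automatic'' is the compatibility of identifications. In the Corollary, $A'$ is produced by Lemma~\ref{surf_lemma_CanExtendEndRing}, and it is the isogeny implicit in \emph{that} construction under which $\operatorname{End}_K(A')$ becomes the fixed $\mathcal{O}_D\supseteq R$; the isogeny $\varepsilon_1$ is obtained afterwards from the Isogeny Theorem and need not induce the same identification. Under the $\varepsilon_1$-identification, $\operatorname{End}_K(A')$ lands on a conjugate maximal order $\mathcal{O}_D'=\gamma^{-1}\mathcal{O}_D\gamma$ for some $\gamma\in D^\times$, which for noncommutative $D$ need not contain $R$. So what your argument actually proves is $n\mathcal{O}_D'\subseteq R$, and the chain $n\mathcal{O}_D\subseteq R\subseteq\mathcal{O}_D$ you wrote down is not justified as stated. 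The fix is short: by the first proposition of this appendix all maximal orders of $D$ have the same covolume, so from $n\mathcal{O}_D'\subseteq R\subseteq\mathcal{O}_D$ one still gets
\[
[\mathcal{O}_D:R]=\frac{\operatorname{covol}(R)}{\operatorname{covol}(\mathcal{O}_D)}=\frac{\operatorname{covol}(R)}{\operatorname{covol}(\mathcal{O}_D')}\le\frac{\operatorname{covol}(n\mathcal{O}_D')}{\operatorname{covol}(\mathcal{O}_D')}=n^{\dim_{\mathbb Q}D}.
\]
Say this explicitly rather than asserting it is automatic. The paper's Hadamard argument, by contrast, never needs to know what $\varphi$ looks like under any identification, which is why it sidesteps this issue.
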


\begin{proof}
Let $A', \varepsilon_1, \varepsilon_2$ be as in the above corollary. Consider the following linear map:
\[
\begin{matrix}
\varphi: & \operatorname{End}_K(A') & \to & \operatorname{End}_K(A) & \hookrightarrow & \operatorname{End}_K(A') \\
         &        e            & \mapsto & \varepsilon_2 \circ e \circ \varepsilon_1,
\end{matrix}
\]
where the second embedding is given by the fact that $R=\operatorname{End}_K(A)$ is an order in $D$ and $\operatorname{End}_K(A')=\mathcal{O}_D$ is a maximal order containing $R$. Note that $\operatorname{End}_K(A)$ is endowed with a positive-definite quadratic form given by the degree. We consider both $\operatorname{End}_K(A)$ and $\operatorname{End}_K(A')$ as lattices sitting inside $D_\mathbb{R}=\operatorname{End}_K(A') \otimes_{\mathbb{Z}} \mathbb{R}$, and observe that the degree map extends naturally to a positive-definite quadratic form on $D_\mathbb{R}$. This makes $D_\mathbb{R}$ into an Euclidean space, which in particular comes equipped with a natural (Lebesgue) measure. Denote by $r$ the $\mathbb{R}$-dimension of $D_\mathbb{R}$, which is also the dimension of $D$ as a $\mathbb{Q}$-vector space.
Since the equality $\deg(e_1 \circ e_2)=\deg(e_1) \cdot \deg(e_2)$ holds for any pair of isogenies between abelian varieties, we have
\[
\deg(\varphi(e))=\deg(\varepsilon_2 \circ e \circ \varepsilon_1)=\deg(\varepsilon_1)\deg(\varepsilon_2)\deg(e) \leq b(A/K)^2 \deg(e).
\]

Extend $\varphi$ by linearity to an endomorphism of $D_\mathbb{R}$ (which we still denote by $\varphi$) and fix a $\deg$-orthonormal basis $\gamma_1, \ldots, \gamma_r$ of $D_\mathbb{R}$. By construction the inclusion $\varphi(\mathcal{O}_D) \subseteq R$ holds, so we have the inequality
\[
[\mathcal{O}_D : R] = \frac{\operatorname{covol}(R)}{\operatorname{covol}(\mathcal{O}_D)} \leq \frac{\operatorname{covol} \left(\varphi(\mathcal{O}_D) \right)}{\operatorname{covol}(\mathcal{O}_D)} = \frac{\det(\varphi) \operatorname{covol}(\mathcal{O}_D)}{\operatorname{covol}(\mathcal{O}_D)}=\det(\varphi).
\]

Write $\displaystyle \varphi(\gamma_i)=\sum_{j=1}^r a_{ij} \gamma_j$ with $a_{ij} \in \mathbb{R}$ for the matrix representing $\varphi$ in the basis of the $\gamma_j$. Let $\lambda(\cdot,\cdot)$ be the bilinear form associated with $\deg$. Using the inequality $\deg(\varphi(e)) \leq  b(A/K)^2 \deg(e)$ we deduce
\[
\deg \left(\sum_j a_{ij} \gamma_j \right) = \deg(\varphi(\gamma_i)) \leq b(A/K)^2 \deg(\gamma_i)=b(A/K)^2 \quad \forall i=1,\ldots,r,
\]
so
\[
b(A/K)^2 \geq \lambda \left(\sum_j a_{ij} \gamma_j, \sum_k a_{ik} \gamma_k \right) = \sum_{j,k} a_{ij}a_{ik} \lambda(\gamma_j,\gamma_k) = \sum_j a_{ij}^2 \quad \forall i=1,\ldots,r:
\]
equivalently, the $L^2$-norm of each row of the matrix $(a_{ij})$ is bounded by $b(A/K)$. Hadamard's inequality then gives
\[
[\mathcal{O}_D : R] \leq \det(\varphi) \leq \prod_{i=1}^r \|a_i\|_{L^2} \leq b(A/K)^r,
\]
which is the desired estimate.
 \end{proof}

\medskip

\noindent\textbf{Acknowledgments.} I would like to thank my advisor, Nicolas Ratazzi, for his time and support.
This work was partially supported by the Fondation Mathématique Jacques Hadamard through the grant no ANR-10-CAMP-0151-02 in the ``Programme des Investissements d'Avenir".

\bibliography{Bibliography}{}
\bibliographystyle{alpha}

\end{document}